\newtheorem{thm}{Theorem}
\newtheorem{lma}{Lemma}
\newtheorem{prop}{Proposition}
\newtheorem{cor}{Corollary}
\newtheorem*{thm*}{Theorem}
\theoremstyle{definition}
\newtheorem{definition}{Definition}
\theoremstyle{remark}
\newtheorem{remark}{Remark}
\newcommand{\Ric}{\mbox{Ric}}
\newcommand{\R}{\mathbb R}
\newcommand{\Rm}{\mbox{Rm}}
\newcommand{\ex}{\mbox{exp}}
\newcommand{\be}{\begin{equation}}
\newcommand{\ee}{\end{equation}}
\newcommand{\bee}{\begin{equation*}}
\newcommand{\eee}{\end{equation*}}
\def\D{\Delta_f}
\def\na{\nabla}
\def\la{\langle}
\def\ra{\rangle}
\def\Pi{\displaystyle{\mathbb{II}}}
\def\a{\alpha}
\def\C{\mathbb{C}}
\begin{document}

\title[]{Curvature estimates and gap theorems for expanding Ricci solitons}

\author{Pak-Yeung Chan}


\address{School of Mathematics,
University of Minnesota, Minneapolis, MN 55455, USA} \email{chanx305@umn.edu}

\maketitle
\markboth{Pak-Yeung Chan} {Curvature estimates and gap theorems for expanding Ricci solitons}

\begin{abstract} We derive a sharp lower bound for the scalar curvature of non-flat and non-compact expanding gradient Ricci soliton provided that the scalar curvature is non-negative and the potential function is proper. Upper bound for the scalar curvature of expander with nonpositive Ricci curvature will also be given.
Furthermore, We provide a sufficient condition for the scalar curvature of expanding soliton being nonnegative. Curvature estimates of expanding solitons in dimensions three and four will also be established. As an application, we prove a gap theorem on three dimensional gradient expander.
\end{abstract}

\section{Introduction} Let $(M^n,g)$ be an $n$ dimensional smooth connected Riemannian manifold and $X$ be a smooth vector field on $M$. The triple $(M,g,X)$ is said to be a Ricci soliton if there is a constant $\lambda$ such that the following equation is satisfied
\be\label{eq-RS-1}
\Ric+\dfrac{1}{2}L_{X}g=\lambda g,
\ee
where $\Ric$ and $L_{X}$ denote the Ricci curvature and Lie derivative with respect to $X$ respectively. A Ricci soliton is called expanding (steady, shrinking) if $\lambda<0$ $(=0, >0)$. Upon scaling the metric by a constant, we assume $\lambda \in \{-\frac{1}{2}, 0, \frac{1}{2}\}.$ The soliton is called complete if $(M,g)$ is complete as a Riemannian manifold. It is said to be gradient if $X$ can be chosen such that $X=\nabla f$ for some smooth function $f$ on $M$. In this case, $f$ is called a potential function and (\ref{eq-RS-1}) can be rewritten as
\be\label{eq-RS-2}
\Ric_{f}:=\Ric+\nabla^2 f=\lambda g.
\ee
We shall abbreviate shrinking, steady and expanding solitons as shrinker, steadier and expander respectively.

Hereinafter, $S$ denotes the scalar curvature of the Riemannian manifold. Fix a point $p_0$ in $M$, for any $x$ in $M$, $r$, $r(x)$ and $d(x, p_0)$ will be used interchangeably and refer to the distance between $x$ and $p_0$. $B_R(p)$ is the geodesic ball centered at $p$ with radius $R$. For any smooth function $\omega$, we define the weighted Laplacian w.r.t. $\omega$ by $\Delta_{\omega}:=\Delta- \na \omega\cdot \na $, where $\Delta$ is the usual Laplacian.

Ricci flow was introduced by Hamilton in his seminal work \cite{Hamilton-1982} to study closed three manifolds with positive Ricci curvature:
\be\label{eqn of RF}
\frac{\partial g(t)}{\partial t}=-2\Ric(g(t)).
\ee
\let\thefootnote\relax\footnotetext{2010 Mathematics Subject Classification. Primary 53C21; Secondary 53C20, 53C44}
Ricci soliton is of great importance since it is a self similar solution to the Ricci flow and usually arises as a rescaled limit of the flow near its singularities (see \cite{Hamilton-1995} and \cite{Cao-2010}). In particular, expanding Ricci soliton is related to the limit solution of Type III singularities of the Ricci flow (see \cite{Cao-1997}, \cite{ChenZhu-2000} and \cite{Lott-2007}). Ricci soliton is also a natural generalization of the Einstein metric. The corresponding curvature quantity $\Ric_{f}:=\Ric+\nabla^2 f$ plays a significant role in the theory of smooth metric measure spaces (see \cite{Lott-2003}, \cite{WeiWylie-2009}, \cite{MunteanuWang-2012}, \cite{MunteanuWang-2014} and \cite{MunteanuSungWang-2017}).

Chen \cite{Chen-2009} showed that any complete ancient solutions to the Ricci flow have non-negative scalar curvature. Consequently, any complete shrinking and steady gradient Ricci solitons have non-negative scalar curvature (see also \cite{Zhang-2009} for a different proof). By the strong minimum principle, the scalar curvature of gradient shrinker and steadier must be positive unless the manifolds are Ricci flat (and hence is flat in the former case). Chow-Lu-Yang \cite{ChowLuYang-2011} applied an estimate of the potential function for gradient shrinker by Cao-Zhou \cite{CaoZhou-2010} and obtained a sharp positive lower bound for the scalar curvature of noncompact and nonflat shrinker.
\begin{thm}\cite{ChowLuYang-2011}\label{scalar lower bdd in shrinker} Let $(M^n, g, f)$ be an $n$ dimensional complete noncompact and nonflat shrinking gradient Ricci soliton. Then there exists a positive constant $c$ such that
\be\label{ineq for scalar lower bdd in shrinker}
S\geq \frac{c}{r^2}
\ee
outside a compact set of $M$.
\end{thm}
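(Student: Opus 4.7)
My plan is to combine three ingredients: Chen's non-negativity $S \ge 0$, the Cao--Zhou two-sided estimate on the potential, and a weighted maximum-principle argument applied to an auxiliary function built from $S$ and $f$. After an additive normalization of $f$, the standard shrinker conservation law $S + |\nabla f|^2 = f$ holds, together with the identities $\Delta f = \tfrac{n}{2}-S$, $\Delta_f f = \tfrac{n}{2}-f$, $\Delta_f S = S - 2|\Ric|^2$, and $\langle \nabla f,\nabla S\rangle = 2\,\Ric(\nabla f,\nabla f)$. Chen's theorem gives $S \ge 0$; the inequality $\Delta_f S \le S$, combined with the strong maximum principle and the non-flatness hypothesis, upgrades this to $S > 0$ on all of $M$ (otherwise $S \equiv 0$ forces $\Ric \equiv 0$ via $|\Ric|^2 \ge S^2/n$, hence $\nabla^2 f = \tfrac12 g$, which contradicts non-flatness).

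By the Cao--Zhou estimate, $f$ is a proper exhaustion comparable to $r^2/4$ outside a compact set, so it suffices to prove $u := fS$ is bounded below by a positive constant outside a compact set. The central computation, via the product rule and the identities above, is
\[
\Delta_f(fS) \;=\; -2f\,|\Ric|^2 \,+\, \tfrac{n}{2}\,S \,+\, 4\,\Ric(\nabla f,\nabla f).
\]
I argue by contradiction: suppose there is a sequence $x_k$ with $r(x_k) \to \infty$ and $u(x_k) \to 0$. An Omori--Yau-type weighted maximum principle at infinity (whose applicability rests on the Ricci lower bound implicit in the soliton equation together with Cao--Zhou) produces such a sequence along which additionally $|\nabla u(x_k)| \to 0$ and $\liminf \Delta_f u(x_k) \ge 0$. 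The vanishing of the gradient reads $\nabla S = -(S/f)\nabla f + o(1)$ at $x_k$, which converts the cross-term $4\Ric(\nabla f,\nabla f) = 2\langle\nabla f,\nabla S\rangle$ into the explicit quantity $-2S + 2S^2/f + o(1)$. Substituting back and invoking $|\Ric|^2 \ge S^2/n$, the condition $\liminf\Delta_f u(x_k) \ge 0$ becomes, to leading order,
\[
(n/2 - 2)\,S(x_k) + 2\,S(x_k)^2/f(x_k) \;\ge\; (2/n)\,f(x_k)\,S(x_k)^2 + o(S(x_k)),
\]
which, once $f(x_k)S(x_k) \to 0$ and $f(x_k) \to \infty$, forces a contradiction and therefore yields $fS \ge c > 0$ outside a compact set.

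The main obstacle will be the indefinite cross-term $\Ric(\nabla f,\nabla f)$, which has no sign and is not a priori controlled by $S$ alone; the gradient information furnished by the maximum-principle sequence is what allows its absorption into the strictly negative $-2f|\Ric|^2$. A secondary technical point is justifying the applicability of the Omori--Yau maximum principle (or an exhaustion-based surrogate on the compact sublevel sets $\{f \le R\}$ provided by Cao--Zhou). Once the contradiction is secured, the bound $fS \ge c$ translates via $f \asymp r^2/4$ directly into $S \ge c/r^2$ outside a compact set.
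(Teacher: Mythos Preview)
The paper does not itself prove this theorem; it is quoted from \cite{ChowLuYang-2011} as background. So there is no in-paper proof to compare against directly, and I will review your argument on its own merits (the paper's own barrier argument for its Theorem~\ref{scalar lower bdd in expander} illustrates the type of reasoning that does succeed for such lower bounds).

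Your overall set-up is fine: the normalization $S+|\nabla f|^2=f$, the identity $\Delta_f(fS)=-2f|\Ric|^2+\tfrac{n}{2}S+2\langle\nabla f,\nabla S\rangle$, the strict positivity $S>0$ via the strong minimum principle, and the Cao--Zhou estimate $f\asymp r^2/4$ are all correct and standard. The difficulty is in the contradiction step, which does not close.

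Carry out your own computation along the Omori--Yau sequence. From $|\nabla u|\to 0$ you get $2\langle\nabla f,\nabla S\rangle=-2S+2S^2/f+\tfrac{2}{f}\langle\nabla f,\nabla u\rangle$, hence
\[
\Delta_f u \;=\; -2f\,|\Ric|^2+\Big(\tfrac{n}{2}-2\Big)S+\tfrac{2S^2}{f}+\tfrac{2}{f}\langle\nabla f,\nabla u\rangle.
\]
Invoking $|\Ric|^2\ge S^2/n$ and $\liminf\Delta_f u\ge 0$ gives, after dividing by $S(x_k)>0$,
\[
\Big(\tfrac{n}{2}-2\Big)+\tfrac{2S(x_k)}{f(x_k)}\;\ge\;\tfrac{2}{n}\,f(x_k)S(x_k)\;+\;\frac{1}{S(x_k)}\cdot\frac{2}{f(x_k)}\langle\nabla f,\nabla u\rangle(x_k).
\]
Two problems now appear. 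First, the error term on the right is bounded by $\tfrac{2|\nabla u(x_k)|}{\sqrt{f(x_k)}\,S(x_k)}$, and Omori--Yau gives only $|\nabla u(x_k)|\to 0$, with no control of the ratio $|\nabla u|/S$; your claim that this term is $o(1)$ is unjustified. Second, and more fatally, even if the error were negligible, the left side tends to $\tfrac{n}{2}-2$ while the right side tends to $0$ (since $fS\to 0$), so for every $n\ge 4$ you obtain $\tfrac{n}{2}-2\ge 0$, which is \emph{not} a contradiction. Your argument therefore cannot prove the theorem in dimensions $n\ge 4$.

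There is also a structural issue with invoking Omori--Yau at all: the global infimum of $u=fS$ may well be zero and attained on the compact set where $f$ is small (indeed $f$ can vanish at its minimum), so a minimizing sequence need not escape to infinity. One must instead work on the exterior $\{f\ge A\}$ with a barrier whose boundary values are controlled, exactly as the paper does for expanders in its proof of Theorem~\ref{scalar lower bdd in expander}: compare $fS$ (or a variant) with an explicit sub-/supersolution built from $f$, and run the maximum principle on the compact annuli $\{A\le f\le R\}$ furnished by Cao--Zhou, letting $R\to\infty$. That route avoids both the uncontrolled error term and the dimension obstruction; your Omori--Yau shortcut does not.
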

 \begin{remark}As pointed out in \cite{ChowLuYang-2011}, the estimate (\ref{ineq for scalar lower bdd in shrinker}) is sharp on the shrinker constructed by Feldman-Ilmanen-Knopf \cite{FeldmanIlmanenKnopf}.
 \end{remark}
For the case of steady soliton, Chow-Lu-Yang \cite{ChowLuYang-2011} also provided a sharp lower estimate for the scalar curvature under some conditions on the potential function (see also \cite{MunteanuSungWang-2017}).
\begin{thm}\cite{ChowLuYang-2011}\label{scalar lower bdd in steadier} Let $(M^n, g, f)$ be an $n$ dimensional complete noncompact and non Ricci flat steady gradient Ricci soliton with the scaling convention $|\na f|^2+S=1$. Suppose $f\leq 0$ and $\lim_{x\to \infty} f(x)=-\infty$. Then
\be\label{ineq for scalar lower bdd in steadier}
S\geq \frac{1}{\sqrt{\frac{n}{2}}+2}e^f \text{  on } M.
\ee
\end{thm}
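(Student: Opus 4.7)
The plan is to apply a weighted minimum principle to the auxiliary function $h := S - ce^{f}$, where $c := 1/(\sqrt{n/2}+2)$. I would first collect the standard identities for a steady gradient Ricci soliton: tracing the soliton equation yields $\Delta f = -S$; the normalization $|\nabla f|^{2}+S=1$ combined with the contracted second Bianchi identity gives $\nabla S = 2\Ric(\nabla f,\cdot)$; and a further standard computation produces $\Delta_{f}S=-2|\Ric|^{2}$. Since the soliton is non Ricci flat, Chen's nonnegativity of scalar curvature together with the strong minimum principle applied to $\Delta_{f}S\le 0$ upgrades this to $S>0$ everywhere.

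Next, since $f\le 0$ and $f(x)\to -\infty$ at infinity, the barrier $ce^{f}\to 0$ at infinity, so $\liminf_{x\to\infty}h(x)\ge 0$; hence if $h<0$ somewhere, it attains a negative minimum at some interior $p_{0}\in M$. At $p_{0}$ the conditions $\nabla h(p_{0})=0$ and $\Delta_{f}h(p_{0})\ge 0$, combined with the identity $\Delta_{f}(ce^{f})=-cSe^{f}$, yield $\nabla S = ce^{f}\nabla f$ and $|\Ric|^{2}\le \tfrac{1}{2}cSe^{f}$ at $p_{0}$. Combining the first relation with $\nabla S=2\Ric(\nabla f,\cdot)$ shows that $\nabla f$ is an eigenvector of $\Ric$ at $p_{0}$ with eigenvalue $\mu := \tfrac{1}{2}ce^{f(p_{0})}$.

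Decomposing $\Ric$ along this eigendirection and applying Cauchy--Schwarz to the remaining $n-1$ eigenvalues yields $|\Ric|^{2}\ge \mu^{2} + (S-\mu)^{2}/(n-1)$; combining with the upper bound $|\Ric|^{2}\le \mu S$ gives a quadratic constraint relating $S$ and $ce^{f}$. Together with $h(p_{0})<0$ (i.e.\ $S<ce^{f}$) and the identity $|\nabla f|^{2}=1-S$ at $p_{0}$, the objective is to derive the contradiction $(1-2c)^{2}>\tfrac{n}{2}c^{2}$, which fails precisely at the sharp threshold $c(\sqrt{n/2}+2)=1$.

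The main obstacle will be squeezing the sharp constant out of this chain of inequalities. A direct execution of the analysis above only yields $\mu\le S\le n\mu$ at $p_{0}$, i.e.\ $\tfrac{1}{2}ce^{f}\le S \le \tfrac{n}{2}ce^{f}$, which is consistent with $S<ce^{f}$ and does not immediately contradict $h(p_{0})<0$. To capture the stated sharp $c$ I expect one must refine either the test function --- perhaps taking $h = S - ce^{f}\,\Phi(|\nabla f|^{2})$ for a suitable profile $\Phi$ that actively exploits $|\nabla f|^{2}=1-S$ --- or the critical-point analysis itself by using the full Hessian $\nabla^{2}h(p_{0})\succeq 0$ in directions orthogonal to $\nabla f$, beyond just its weighted trace. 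The characterizing equation $(1-2c)^{2}=\tfrac{n}{2}c^{2}$ of the sharp constant should then emerge as the extremizing equation of the refined inequality.
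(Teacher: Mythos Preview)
This theorem is quoted in the paper as a result of Chow--Lu--Yang; the paper itself supplies no proof, only the citation. There is therefore no argument in the present paper to compare your attempt against.

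On its own merits, your proposal is not a proof but a partial outline with a gap you yourself identify. The setup is fine: the identities $\Delta_f S=-2|\Ric|^2$, $\Delta_f(e^f)=-Se^f$, and the positivity $S>0$ are all correct, and if $h=S-ce^f$ were somewhere negative it would indeed attain a negative interior minimum at some $p_0$. At $p_0$ you correctly obtain $|\Ric|^2\le \mu S$ with $\mu=\tfrac12 ce^{f(p_0)}$, and that $\nabla f$ is a Ricci eigenvector with eigenvalue $\mu$; combining with $|\Ric|^2\ge \mu^2+(S-\mu)^2/(n-1)$ gives only $\mu\le S\le n\mu$. As you note, this is perfectly compatible with $S<2\mu=ce^f$, so no contradiction follows. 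Your argument therefore yields \emph{no} lower bound at all---not even a non-sharp one---because the constraint at $p_0$ never closes.

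The remainder of your write-up is speculation rather than argument. You suggest replacing $ce^f$ by $ce^f\Phi(|\nabla f|^2)$ or exploiting the full Hessian $\nabla^2 h(p_0)\succeq 0$, and you observe that the sharp constant satisfies $(1-2c)^2=\tfrac{n}{2}c^2$; but you give no mechanism by which either refinement would actually produce that relation. Using $\nabla^2 h\succeq 0$ introduces $\nabla^2 S$, which involves $\nabla\Ric$ and is not controlled by the data at hand, and you do not specify any $\Phi$ or compute with it. The missing step---how to pass from the critical-point constraints to the sharp constant $c=(\sqrt{n/2}+2)^{-1}$---is exactly the substance of the theorem, and it is absent here.
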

 \begin{remark} Using some localization and cut off function arguments, Munteanu-Sung-Wang \cite{MunteanuSungWang-2017} showed $S\geq C e^f$ without assuming $\lim_{r\to \infty} f=-\infty$. However the constant $C$ depends not only on the dimension $n$, but also the metric $g$. The decay rate in (\ref{ineq for scalar lower bdd in steadier}) is sharp on cigar soliton (see \cite{ChowLuYang-2011}, \cite{Hamilton-1988} and \cite{Chowetal-2007}).
 \end{remark}

 Unlike shrinker and steadier, expander may have negative scalar curvature. Pigola-Rimoldi-Setti \cite{PigolaRimoldiSetti-2011} and S. J. Zhang \cite{Zhang2-2011} independently improved an estimate in \cite{Zhang-2009} and showed the scalar curvature $S$ of an $n$ dimensional
 complete expanding gradient Ricci soliton satisfies
\be\label{general bdd for S in expander}
S\geq -\frac{n}{2},
\ee
with equality holds somewhere if and only if the expander is Einstein with scalar curvature $-\frac{n}{2}$ (see \cite{PigolaRimoldiSetti-2011} and \cite{Zhang2-2011}).

Roughly speaking, the inequalities (\ref{ineq for scalar lower bdd in shrinker}) and (\ref{ineq for scalar lower bdd in steadier}) in the above theorems provide some quantitative estimates to measure the gap between the solitons and the flat or Ricci-flat spaces.
It is natural to ask whether such a gap exists or not if the scalar curvature of the expander is nonnegative. We provide an affirmative answer to the above question if the potential function $f$ is proper. No upper boundedness of the scalar curvature is assumed. Here is the main result of this paper.
\begin{thm}\label{scalar lower bdd in expander}Let $(M^n,g,f)$ be an $n$ dimensional complete non-compact gradient expanding Ricci soliton with $n\geq 2$. Assume that $\lim_{r\to \infty} f=-\infty$ and $(M,g)$ is non-flat and has non-negative scalar curvature $S$. Then there exists a positive constant $C$ such that
\be\label{ineq for scalar lower bdd in expander}
S\geq Cv^{1-\frac{n}{2}}e^{-v} \text{  on  } M,
\ee
where $v:=\frac{n}{2}-f$.
\end{thm}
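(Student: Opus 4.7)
The plan is to compare $S$ to the explicit barrier $\phi:=v^{1-n/2}e^{-v}$ via a maximum-principle argument on the quotient $u:=S/\phi$. The choice of $\phi$ is motivated by the Gaussian expander on $\R^n$: a short radial computation shows that $r^{2-n}e^{-r^2/4}$ is an exact eigenfunction of $-\Delta_{f}$ with eigenvalue $1$ there, and $\phi$ reduces to it (up to the factor relating $v$ to $r^2/4$) for large $v$. So $\phi$ records the expected asymptotic decay rate of $S$ in the non-flat case.

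I would first collect the soliton identities for $\lambda=-\tfrac12$. Tracing (\ref{eq-RS-2}) gives $\Delta f+S=-\tfrac n2$, and the classical computation $\nabla(S+|\nabla f|^{2}+f)=0$ allows, after fixing the additive constant in $f$, the normalization $|\nabla v|^{2}=v-\tfrac n2-S$. These yield $\Delta_f v=v$ and $\Delta_f S=-S-2|\Ric|^{2}$. Since $\Delta_f S\le 0$ and $S\ge 0$, the strong minimum principle together with non-flatness forces $S>0$ on $M$ (were $S\equiv 0$, then $|\Ric|^{2}\equiv 0$, and $\Ric+\nabla^{2}f=-g/2$ would make $g$ flat). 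A direct substitution with $\phi=v^{a}e^{-v}$, $a=1-\tfrac n2$, then gives
\begin{equation*}
\frac{\Delta_f\phi}{\phi}=-(1+S)-(n-2)\Bigl[\frac{n+4S}{4v}+\frac{n(n+2S)}{8v^{2}}\Bigr],
\end{equation*}
so $\Delta_f\phi\le-(1+S)\phi$ for every $n\ge 2$, with equality at $n=2$.

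The heart of the argument is the quotient PDE for $u=S/\phi$. Set $\tilde\Delta:=\Delta_{f-2\log\phi}=\Delta_{f}+2\phi^{-1}\nabla\phi\cdot\nabla$; the product rule yields $\tilde\Delta u=\phi^{-1}(\Delta_f S-u\,\Delta_f\phi)$. Substituting the two expressions above and using $u\phi=S$ together with $|\Ric|^{2}=|\mathring\Ric|^{2}+S^{2}/n$ gives
\begin{equation*}
\tilde\Delta u = \phi^{-1}\Bigl\{(n-2)\,S\bigl[\tfrac{S}{n}+R(v,S)\bigr]-2|\mathring\Ric|^{2}\Bigr\},
\end{equation*}
where $R(v,S)\ge 0$. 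In dimension $n=2$ both the factor $(n-2)$ and $|\mathring\Ric|^{2}$ vanish, so $\tilde\Delta u\equiv 0$, and the strong maximum principle for $\tilde\Delta$ forbids any interior minimum of $u$. Exhausting $M$ by the compact sublevel sets $\Omega_V:=\{v\le V\}$ (compact by properness of $f$) and running the max principle on each $\Omega_V$ then yields $S\ge c_V\phi$ on $\Omega_V$, where $c_V:=\min_{\{v=V\}}S/\phi$.

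The main obstacle is to rule out $\liminf_{V\to\infty}c_V=0$, i.e.\ to show that $u=S/\phi$ does not decay to zero at infinity. I would argue by contradiction: assuming $u(x_k)\to 0$ for some $x_k\to\infty$, I would either (i) test $\Delta_f S+S=-2|\Ric|^{2}$ against a cut-off of $\phi$ on $\Omega_{V_k}$ and integrate by parts to extract an identity that contradicts $S>0$ and non-flatness, or (ii) perform a blow-down along $\{v=V_k\}$ using the quadratic growth $v\sim r^{2}/4$ available for expanders with proper potential, forcing the limit to be flat and contradicting $S>0$ through asymptotic matching. A secondary difficulty in dimension $n\ge 3$ is the sign-indefinite term $(n-2)S^{2}/n-2|\mathring\Ric|^{2}$ in $\tilde\Delta u$, which blocks a direct max-principle comparison; I would handle this by replacing $\phi$ with the family $\phi_\varepsilon:=(v-\varepsilon)^{1-n/2}e^{-(v-\varepsilon)}$ (or a Kato-type perturbation) and sending $\varepsilon\downarrow 0$ after the argument.
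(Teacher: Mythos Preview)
Your setup is reasonable, but the proof has a genuine gap precisely where you flag it: you never control $\liminf_{V\to\infty}\min_{\{v=V\}}S/\phi$, and neither of your proposed fixes (i), (ii) works as stated. For (i), integrating $\Delta_f S+S=-2|\Ric|^2$ against a cutoff of $\phi$ produces boundary terms involving $\nabla S$ that you have no control over (no curvature bound is assumed). For (ii), a blow-down needs uniform curvature bounds to extract a limit, again not available here. In dimension $n\ge 3$ there is a second gap: the $\phi_\varepsilon$ perturbation does nothing to the sign-indefinite term $(n-2)S^2/n-2|\mathring\Ric|^2$; shifting $v$ by $\varepsilon$ only alters lower-order pieces, not the leading $-2|\mathring\Ric|^2$, so the quotient equation for $u$ remains unusable for a minimum principle.

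The paper resolves both issues simultaneously with a different comparison. Instead of $S/\phi$, it works with $vS$ and the operator $\Delta_{f+2\ln v}$; from $\Delta_{f+2\ln v}(vS)=-2v|\Ric|^2-2|\nabla\ln v|^2 vS\le 0$ one gets a clean superharmonic quantity in \emph{all} dimensions, with no traceless-Ricci obstruction. The decisive trick is to build a \emph{strict} subsolution by inserting an extra factor $e^{1/\sqrt v}$ into the barrier: one computes
\[
\Delta_{f+2\ln v}\bigl(e^{1/\sqrt v}v^{2-n/2}e^{-v}\bigr)\ge e^{1/\sqrt v}v^{2-n/2}e^{-v}\Bigl(\frac{1}{4\sqrt v}-2S\Bigr)
\]
near infinity. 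Now set $Q=vS-b\,e^{1/\sqrt v}v^{2-n/2}e^{-v}$ for small $b>0$. At an interior minimum of $Q$ the displayed inequality forces $S\ge 1/(8\sqrt v)$, i.e.\ $vS\ge \sqrt v/8$, and one arranges in advance that $\sqrt v/8>e^{1/\sqrt v}v^{2-n/2}e^{-v}$ there, so $Q\ge 0$ automatically. This self-improving loop is what replaces your missing boundary-at-infinity information; the $e^{1/\sqrt v}$ correction creates the $1/\sqrt v$ gap that drives it.
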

\begin{remark} \label{suff for proper of f in general dim exp} The estimate (\ref{ineq for scalar lower bdd in expander}) is sharp when $n=2$ on $2$ dimensional positively curved gradient expander and when $n=2m$, $m\geq 2$ on the K\"{a}hler gradient expander constructed by Feldman-Ilmanen-Knopf (see \cite{FeldmanIlmanenKnopf} and \cite{Siepmann-2013}). By (\ref{general bdd for S in expander}) and (\ref{eqn of naf}), $f\leq \frac{n}{2}$, hence $f$ is proper (preimages of compact subsets are compact) if and only if $\lim_{r\to \infty} f=-\infty$. One sufficient condition for the properness of $f$ is that for some $\varepsilon>0$,
$$\Ric\geq (\varepsilon-\frac{1}{2}) g$$
outside a compact subset of $M$.
\end{remark}
\begin{remark}Under the additional assumption that $\lim_{r\to\infty} r^2|\Ric|=0$, Deruelle \cite{Deruelle-2017} also gave a lower bound (possibly zero) of $v^{\frac{n}{2}-1}e^vS$ which depends on $\liminf_{r\to\infty} v^{\frac{n}{2}-1}e^vS$. The limit $\lim_{r\to\infty} v^{\frac{n}{2}-1}e^{v-\frac{n}{2}}S$ is also related to the notion of the scalar curvature at infinity in \cite{Deruelle-2017.0}.
\end{remark}
 For expander with nonpositive Ricci curvature and proper potential function, we prove an upper bound for the scalar curvature.
\begin{thm}\label{scalar lower bdd in expander when Ric <0}Let $(M^n,g,f)$ be an $n$ dimensional complete non-compact gradient expanding Ricci soliton with $n\geq 2$. Suppose that $\lim_{r\to \infty} f=-\infty$ and $(M,g)$ is non-flat and has non-positive Ricci curvature. Then there is a positive constant $C$ such that
\be\label{ineq for scalar lower bdd in expander when Ric <0}
S\leq -Cv^{1-\frac{n}{2}}e^{-v},
\ee
where $v:=\frac{n}{2}-f$.
\end{thm}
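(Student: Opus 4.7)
The plan is to mirror the proof of Theorem~\ref{scalar lower bdd in expander} (the $S\geq 0$ lower bound), with two switches: replace $S$ by $u:=-S\geq 0$, and replace the universal Kato bound $|\Ric|^{2}\geq S^{2}/n$ by its reverse companion, which is available precisely when $\Ric\leq 0$. Indeed, if the Ricci eigenvalues $\mu_{1},\dots,\mu_{n}$ are all $\leq 0$, then
\[
|\Ric|^{2}=\sum_{i}\mu_{i}^{2}\;\leq\;\Bigl(\sum_{i}|\mu_{i}|\Bigr)^{2}=S^{2}=u^{2}.
\]
The target (\ref{ineq for scalar lower bdd in expander when Ric <0}) is equivalent to showing that $F:=u\,v^{n/2-1}e^{v}$ admits a uniform positive lower bound on $M$.

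First, the soliton equation $\Ric+\nabla^{2}f=-\tfrac12 g$ and the normalized conservation law $|\nabla f|^{2}+S=v$ yield $\Delta_{f}v=v+\tfrac{n}{2}$, $|\nabla v|^{2}=v+u$, and $\Delta_{f}u=-u+2|\Ric|^{2}$. The strong minimum principle applied to $\Delta_{f}u+u=2|\Ric|^{2}\geq 0$ with $u\geq 0$ forces either $u>0$ on $M$ or $u\equiv 0$; in the latter case $\Ric\equiv 0$ (since $\Ric\leq 0$ and its trace $S$ vanishes), so $\nabla^{2}f=-\tfrac12 g$, and by Tashiro's rigidity $(M,g)$ must be isometric to flat Euclidean space, contradicting non-flatness. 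Hence $u>0$ on $M$. A direct calculation of $\Delta_{f}\log F=\Delta_{f}\log u+(n/2-1)\Delta_{f}\log v+\Delta_{f}v$, combined with the critical-point identity $|\nabla\log u|^{2}=\alpha^{2}|\nabla v|^{2}$ with $\alpha:=(n/2-1)/v+1$, collapses at any interior critical point of $F$ to
\[
\Delta_{f}\log F\;=\;\frac{2|\Ric|^{2}}{u}\;-\;u\Bigl[1+\frac{n-2}{v}+\frac{n(n-2)}{4v^{2}}\Bigr].
\]
At an interior minimum of $F$, $\Delta_{f}\log F\geq 0$; combined with the reverse Kato bound $|\Ric|^{2}\leq u^{2}$, this reduces to $v^{2}-(n-2)v-n(n-2)/4\geq 0$, i.e.\ $v\geq V_{n}:=\bigl((n-2)+\sqrt{2(n-1)(n-2)}\bigr)/2$. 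Consequently $F$ admits no interior minimum on the compact region $\{v\leq V_{n}\}$ (compact by the properness of $f$), and on this region $F$ is bounded below by a positive constant by continuity and $u>0$.

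The principal remaining obstacle is to preclude $\inf_{M}F=0$ being approached only along sequences $q_{k}\to\infty$, where the interior-minimum inequality cannot be applied directly. Because the Bakry-Emery Ricci tensor $\Ric_{f}=-\tfrac12 g$ is bounded below, the Omori-Yau maximum principle for $\Delta_{f}$ holds on $M$ (via Wei-Wylie/Qian type results); applying it to a bounded monotone transformation of $\log F$ provides an almost-minimizing sequence along which the computation of the previous step persists up to $o(1)$, yielding a contradiction with $\inf F=0$. Alternatively, a direct barrier argument based on $\psi:=v^{1-n/2}e^{-v}$---which satisfies the parallel identity $\Delta_{f}\psi+\psi=u\psi[1+(n-2)/v+n(n-2)/(4v^{2})]$---can be used to trap $F=u/\psi$ from below at infinity. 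I expect this final step, promoting the local algebraic constraint $v\geq V_{n}$ at interior minima to a global positive lower bound, to be the main technical difficulty; by contrast, the reduction to that algebraic constraint is a routine consequence of choosing the correct exponents $(n/2-1,1)$ in $F$ and invoking the reverse Kato inequality $|\Ric|^{2}\leq S^{2}$ valid under $\Ric\leq 0$.
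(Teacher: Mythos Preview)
Your setup has some normalization slips: the Hamilton identity here reads $|\nabla f|^{2}+S=-f=v-\tfrac{n}{2}$ (not $=v$), so in fact $\Delta_{f}v=v$ and $|\nabla v|^{2}=v-\tfrac{n}{2}+u$. These are fixable, but they propagate into your critical-point formula for $\Delta_{f}\log F$.

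The real problem is structural. Even with the corrected formula, at an interior minimum of $F$ you get
\[
0\;\leq\;\Delta_{f}\log F\;\leq\;\frac{2u^{2}}{u}-uB\;=\;u\,(2-B),
\qquad B=1+\tfrac{n-2}{v}+\tfrac{n(n-2)}{4v^{2}},
\]
which yields only $B\leq 2$, i.e.\ $v\geq V_{n}$. This constrains \emph{where} a minimum can sit, not \emph{how small} $F$ can be there: dividing through by $u$ erases all quantitative information about $u$ (equivalently about $F$). Nothing prevents $F$ from being arbitrarily small at a point with $v$ large. The Omori--Yau step does not rescue this. If $\inf_{M}F=0$ then $\log F$ is unbounded below and the principle does not apply to it; applying it to $F$ (or to a bounded transform) along an almost-minimizing sequence with $v_{k}\to\infty$ reproduces, up to $o(1)$, the same inequality $B(v_{k})\leq 2+o(1)$, which is automatically satisfied for large $v_{k}$ and yields no contradiction with $F(q_{k})\to 0$.

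The paper closes exactly this gap by a barrier comparison rather than by analyzing $\log F$. One first checks $S<0$ everywhere (strong maximum principle, using $|\Ric|^{2}\leq S^{2}$ and non-flatness), then sets
\[
Q\;:=\;vS+\alpha\,e^{1/\sqrt{v}}\,v^{2-\frac{n}{2}}e^{-v}
\]
for small $\alpha>0$ and runs the maximum principle for $\Delta_{f+2\ln v}$ on annuli. The point you are missing is that at an interior maximum of $Q$ the inequality
\[
\Delta_{f+2\ln v}(vS)\;\geq\;S\Bigl(-2+\tfrac{n+2S}{v}-2vS\Bigr)
\]
together with $S<0$ forces the bracket to be $\geq 0$, hence $vS\leq -\tfrac12$ \emph{directly} (for $v$ large). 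This is a bound on $vS$ itself, not merely on $v$; comparing with the barrier, which is $\leq\tfrac12$ there, gives $Q\leq 0$ at the maximum, and the annulus argument finishes. The extra factor $e^{1/\sqrt{v}}$ is what makes the barrier a strict subsolution near infinity. Your alternative ``direct barrier argument based on $\psi$'' is essentially this; the interior-minimum/Omori--Yau route, as written, does not produce a lower bound on $F$.
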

\begin{remark}
The upper estimate is sharp on two dimensional negatively curved gradient expanders with curvature decaying to zero at infinity.
\end{remark}

In view of Theorem \ref{scalar lower bdd in expander}, it is interesting to see when the scalar curvature of an expander is nonnegative. Sufficient condition for $S\geq 0$ on Yamabe soliton was studied by Ma-Miquel \cite{MaMiquel-2012}. The lower bound of the scalar curvature is also important since it is related to the connectedness at infinity of the expander (see \cite{MunteanuWang-2012}, \cite{ChenDeruelle-2015} and \cite{MunteanuWang-2015.4}). It is known that there doesn't exist any compact expander with nonnegative scalar curvature. Indeed, any compact expander must be Einstein with scalar curvature $-\frac{n}{2}$ (see \cite{Chowetal-2007}). Motivated by various studies of solitons with integrable curvature (see \cite{PigolaRimoldiSetti-2011}, \cite{Deruelle-2012}, \cite{CatinoMastroliaMonticelli-2016}, \cite{MunteanuSungWang-2017} and \cite{Chan-2019.2}), we prove an analog in the noncompact case. Here is the second main result of the paper.
\begin{thm}\label{sufficient condition for S geq 0}
Let $(M^n,g,f)$ be a complete noncompact gradient expanding Ricci soliton with dimension $n\geq 3$. We denote the negative part of the scalar curvature $S$ by $S_-$, i.e. $S_-:=(-S)_+:=\max{\{-S, 0\}}$. If $S_-$ is integrable, that is,
\be\label{negative of S is integrable}
\int_M S_- dv_g <\infty,
\ee
then either $S\geq 0$ everywhere or M is isometric to an Einstein manifold with scalar curvature $-\frac{n}{2}$ and finite volume, where $dv_g$ is the volume element with respect to the metric $g$.
\end{thm}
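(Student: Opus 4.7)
The approach is to convert the drift-Laplacian identity $\Delta_f S + S + 2|\Ric|^2 = 0$ (which follows from the soliton equation with $\lambda = -\tfrac{1}{2}$) into an integral inequality for $w := S_- = \max\{-S, 0\}$, and then extract rigidity from the dimensional coefficient $\tfrac{n-2}{n}$ produced by the companion identity $\Delta f = -\tfrac{n}{2} - S$. By the Pigola--Rimoldi--Setti estimate $S \geq -n/2$, so $0 \leq w \leq n/2$, and $w \in L^1(M, dv_g)$ by hypothesis. On the open set $\Omega := \{S<0\}$, decomposing $|\Ric|^2 = |\mathring{\Ric}|^2 + S^2/n$ and using $S = -w$ there yields
\[
\Delta_f w + w - \tfrac{2}{n} w^2 \; = \; 2|\mathring{\Ric}|^2 \; \geq \; 0 \quad \text{on } \Omega,
\]
with equality precisely at Einstein points. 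A distributional truncation argument (the jump in $\nabla w$ along $\partial \Omega$ gives a non-negative singular contribution to $\Delta w$) extends this to $\Delta_f w + w - \tfrac{2}{n} w^2 \geq 2|\mathring{\Ric}|^2 \chi_\Omega$ weakly on all of $M$.

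I would then test this weak inequality against a non-negative cutoff $\phi_R$ with $\phi_R \equiv 1$ on $B_R(p_0)$, $\mathrm{supp}\,\phi_R \subset B_{2R}(p_0)$, $|\nabla \phi_R| \leq C/R$, integrating against the \emph{unweighted} measure $dv_g$ rather than $e^{-f} dv_g$ --- the unweighted choice is what produces the dimensional coefficient below. After two integrations by parts on the $\Delta_f w = \Delta w - \langle \nabla f, \nabla w \rangle$ piece and substituting $\Delta f = -\tfrac{n}{2}+w$ on $\Omega$, all zeroth-order contributions collect into
\[
\frac{n-2}{n}\int_M \phi_R \, w\!\left(\tfrac{n}{2} - w\right) dv \; + \; 2\int_M \phi_R \, |\mathring{\Ric}|^2 \chi_\Omega \, dv \; \leq \; \int_M w \!\left( \Delta \phi_R + \langle \nabla f, \nabla \phi_R \rangle\right) dv.
\]
For $n \geq 3$ and $0 \leq w \leq n/2$, both integrands on the left-hand side are non-negative.

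The main technical point is showing the right-hand side vanishes as $R \to \infty$. The soliton identity $|\nabla f|^2 = (n/2 - f) - S$ combined with the standard growth estimate $n/2 - f = O(r^2)$ on complete expanders yields $|\nabla f| = O(r)$, so that $|\nabla f|\,|\nabla \phi_R| = O(1)$ uniformly in $R$ on $B_{2R}\setminus B_R$; a Laplacian comparison for $\Delta r$ adapted to the Bakry--\'Emery bound $\Ric_f = -\tfrac{1}{2} g$ controls $|\Delta \phi_R|$ similarly. Hence $|\Delta \phi_R + \langle \nabla f, \nabla \phi_R\rangle| \leq C$ on $\mathrm{supp}(\nabla \phi_R)$, and the right-hand side is dominated by $C\int_{B_{2R}\setminus B_R} w\, dv \to 0$ by the integrability hypothesis. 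Passing to the limit forces $|\mathring{\Ric}| \equiv 0$ on $\Omega$ and $w \in \{0, n/2\}$ almost everywhere. Since $(M,g,f)$ is real analytic and $M$ is connected, $w$ is identically $0$ or identically $n/2$; in the first case $S \geq 0$ on $M$, in the second $\Ric \equiv -\tfrac{1}{2} g$ is Einstein and the hypothesis becomes $\tfrac{n}{2}\,\mathrm{Vol}(M) < \infty$.

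The hardest part is the uniform control of $|\Delta \phi_R + \langle \nabla f, \nabla \phi_R\rangle|$ without an a priori properness assumption on $f$; this may require a level-set cutoff based on the potential-adapted exhaustion $v = \tfrac{n}{2} - f$ rather than the geodesic distance, so that the drift term becomes tractable directly via the soliton identities.
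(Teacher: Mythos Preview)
Your overall strategy matches the paper's: test the identity $\Delta_f S=-S-2|\Ric|^2$ against a cutoff, integrate by parts on the drift term to bring in $\Delta f=-\tfrac{n}{2}-S$, and exploit the positive dimensional constant that appears when $n\ge3$. The difficulty is in how you treat the Laplacian term. You put \emph{both} derivatives onto the cutoff, so the boundary contribution is $\int_M w\,\Delta\phi_R$, and you assert that the Wei--Wylie comparison controls $|\Delta\phi_R|$. It does not: the Bakry--\'Emery comparison for $\Ric_f=-\tfrac12 g$ yields only the one-sided bound $\Delta_f r\le \tfrac12 r+\beta$, hence (with $\psi'\le0$) a \emph{lower} bound $\Delta\phi_R\ge -C$ but no upper bound. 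Without an upper curvature bound there is no lower bound on $\Delta r$, and since $w\ge0$, the term $\int w\,\Delta\phi_R$ can be arbitrarily large positive. Your fallback of a $v$-level-set cutoff would require $f$ to be proper, which is not assumed here.

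The paper avoids this by choosing the multiplier $(-S)_+(S^2+\varepsilon)^{\delta-\frac12}\phi^2$ with $0<\delta<\tfrac14$ and integrating by parts only \emph{once} on $\Delta S$. That single integration produces a term $+\,2\delta\int_{\{S<0\}}|\nabla S|^2(S^2+\varepsilon)^{\delta-\frac12}\phi^2$ with a strictly positive coefficient, which absorbs the cross term $\phi|\nabla\phi|\,|\nabla S|$ via Cauchy--Schwarz. The only surviving boundary terms involve $|\nabla\phi|^2$ and $\phi|\nabla\phi|\,|\nabla f|$, both of which are $O(1)$ on the annulus and are dominated by $C\int_{M\setminus B_R}S_-\to0$. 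In your language this amounts to replacing your implicit choice ``$\delta=0$'' (test function $\phi_R$) by a small positive $\delta$; the resulting gradient-energy term is exactly what is missing from your scheme and is what makes the cutoff argument close without any two-sided control of $\Delta r$.
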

\begin{remark}
The curvature of negatively curved Cao's K\"{a}hler expander in complex dimension one decays exponentially in $r^2$ (see \cite{Cao-1997} and \cite{DengZhu-2015}). Hence Theorem \ref{sufficient condition for S geq 0} doesn't hold in real dimension two.
\end{remark}
One immediate consequence of the above theorem is that any complete noncompact gradient expander with nonnegative scalar curvature outside compact subset must have nonnegative scalar curvature everywhere. Using the asymptotic curvature estimates in \cite{Deruelle-2017}, we also have:
\begin{cor}\label{non -ve scalar bdd for Ricci flat conical expander}
Let $(M^n,g,f)$ be a complete noncompact gradient expanding Ricci soliton with dimension $n\geq 3$. Suppose in addition that
$$\lim_{x\to \infty}r^2|\Ric|=0,$$
where $r$ is the distance function from a fixed point. Then $M$ has nonnegative scalar curvature. In particular, $M$ is connected at infinity.
\end{cor}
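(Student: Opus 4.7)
My plan is to reduce Corollary \ref{non -ve scalar bdd for Ricci flat conical expander} to Theorem \ref{sufficient condition for S geq 0} by verifying that $S_{-}$ is integrable on $M$ and then excluding the Einstein alternative produced by that theorem.

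First, the hypothesis $\lim_{x\to\infty}r^{2}|\Ric|=0$ forces $|\Ric|\to 0$ at infinity, so $\Ric\geq(\varepsilon-\tfrac{1}{2})g$ outside a compact set for any $\varepsilon\in(0,\tfrac{1}{2})$; Remark \ref{suff for proper of f in general dim exp} then guarantees that $f$ is proper, so $v=\tfrac{n}{2}-f\to+\infty$. The standard expander identity $|\nabla f|^{2}+S+f=\text{const}$ together with $S\to 0$ gives $|\nabla v|^{2}\sim v$ at infinity, whence $v\sim r^{2}/4$.

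Next, I would invoke Deruelle's asymptotic estimates from \cite{Deruelle-2017}: under $r^{2}|\Ric|\to 0$ the expander is asymptotic to a Ricci-flat cone, which yields both the Euclidean volume bound $\mathrm{Vol}(B_{R})\leq CR^{n}$ and a sharp curvature decay of the form $|\Ric|\leq C v^{1-n/2}e^{-v}$ at infinity (the same decay alluded to in the remark after Theorem \ref{scalar lower bdd in expander when Ric <0}). Combining these,
\begin{equation*}
\int_{M\setminus B_{1}}|S|\,dv_{g}\leq n\int_{M\setminus B_{1}}|\Ric|\,dv_{g}\leq C\int_{1}^{\infty}r^{2-n}\,e^{-r^{2}/4}\cdot r^{n-1}\,dr<\infty,
\end{equation*}
so $S_{-}$ is integrable on $M$.

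Theorem \ref{sufficient condition for S geq 0} then leaves two alternatives: either $S\geq 0$ everywhere, or $M$ is Einstein with $\Ric=-\tfrac{1}{2}g$ and finite volume. The second case is ruled out both by $|\Ric|\to 0$ (since $|\Ric|\equiv\tfrac{\sqrt{n}}{2}$ in that case) and by the Euclidean volume growth just established, so $S\geq 0$ on $M$. The concluding statement that $M$ is connected at infinity then follows from the known connectedness results for noncompact gradient expanders with nonnegative scalar curvature (see \cite{MunteanuWang-2012} or \cite{ChenDeruelle-2015}). The main obstacle is invoking Deruelle's estimates carefully enough to extract both the Gaussian-type curvature decay and the Euclidean volume bound simultaneously; once these two ingredients are in hand, the remainder of the argument is bookkeeping.
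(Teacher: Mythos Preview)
Your proposal is correct and follows the same approach as the paper, which simply invokes Deruelle's asymptotic curvature estimates \cite{Deruelle-2017} to feed into Theorem \ref{sufficient condition for S geq 0}. One minor simplification: the Euclidean volume bound is not needed (and in Deruelle's framework the full conical structure is more naturally tied to $r^{2}|\Rm|\to 0$ rather than $r^{2}|\Ric|\to 0$); once you have the Gaussian decay $|\Ric|\leq Cv^{1-n/2}e^{-v}$, the universally available estimate $\mathrm{Vol}(B_{R})\leq Ce^{\sqrt{n-1}R}$ from \cite{MunteanuWang-2014} (via \eqref{naf nav sqrtv control linearly in exp}) already makes $S_{-}$ integrable, exactly as in the paper's proof of Theorem \ref{non -ve scalar bdd preserved for conical data}.
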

It was proven by Pigola-Rimoldi-Setti \cite{PigolaRimoldiSetti-2011} that any gradient expander with $S\geq 0$ and $S\in L^1(M, e^{-f}dv_g)$ must be flat. Using $f\leq \frac{n}{2}$ on $M$, their result and Theorem \ref{sufficient condition for S geq 0}, we get a slightly more general classification for $n\geq 3$.
\begin{cor} Given $(M^n, g, f)$ a complete noncompact gradient expanding Ricci soliton with dimension $n\geq 3$. If the scalar curvature $S$ is in $L^1(M, e^{-f}dv_g)$, then M is isometric either to $\R^n$ or to an Einstein manifold with scalar curvature $-\frac{n}{2}$ and finite volume.
\end{cor}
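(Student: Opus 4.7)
The plan is to combine the three ingredients the author has set up: the pointwise bound $f \leq n/2$ (which follows from (\ref{general bdd for S in expander}) together with the standard expander identity $|\nabla f|^2 + S = f + \mathrm{const}$, as noted just before Theorem \ref{scalar lower bdd in expander}), the Pigola--Rimoldi--Setti rigidity result, and Theorem \ref{sufficient condition for S geq 0}.

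First I would reduce the weighted integrability hypothesis $S \in L^1(M, e^{-f} dv_g)$ to an unweighted integrability statement about $S_-$. Since $f \leq n/2$ everywhere on $M$, one has $e^{-f} \geq e^{-n/2}$, so
\begin{equation*}
\int_M S_- \, dv_g \; \leq \; e^{n/2} \int_M S_- \, e^{-f} \, dv_g \; \leq \; e^{n/2} \int_M |S| \, e^{-f} \, dv_g \; < \; \infty .
\end{equation*}
Thus the hypothesis (\ref{negative of S is integrable}) of Theorem \ref{sufficient condition for S geq 0} is satisfied, and that theorem produces the dichotomy: either $S \geq 0$ everywhere on $M$, or $(M,g)$ is Einstein with scalar curvature $-\tfrac{n}{2}$ and finite volume. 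In the latter case we are already in the second alternative of the corollary, so nothing remains to prove.

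In the remaining case $S \geq 0$, the full hypothesis $S \in L^1(M, e^{-f} dv_g)$ coincides with the one in Pigola--Rimoldi--Setti's theorem, so their result applies and forces $(M,g)$ to be flat. To conclude that $(M,g)$ is isometric to $\mathbb{R}^n$, I would use the soliton equation (\ref{eq-RS-2}): flatness gives $\Ric \equiv 0$, hence $\nabla^2 f = -\tfrac{1}{2} g$. Thus $f$ is a smooth function on the complete flat manifold $M$ whose Hessian is a negative multiple of the metric; this forces the universal cover to be $\mathbb{R}^n$ with $f$ a concave quadratic (the Gaussian potential), and a standard argument (the potential descends to $M$, whose fundamental group would then act by isometries preserving a proper strictly concave function, hence trivially) shows $M$ itself is isometric to $\mathbb{R}^n$.

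The whole argument is essentially a bookkeeping step: no new estimates are needed beyond those already stated. The only mildly nontrivial point is the flat-implies-$\mathbb{R}^n$ step, but this is a well-known rigidity for Gaussian expanders and follows directly from $\nabla^2 f = -\tfrac{1}{2}g$ on a complete flat manifold.
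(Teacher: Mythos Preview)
Your proof is correct and follows exactly the argument sketched in the paper: use $f\leq n/2$ to pass from weighted to unweighted integrability of $S_-$, invoke Theorem \ref{sufficient condition for S geq 0} for the dichotomy, and in the nonnegative case apply the Pigola--Rimoldi--Setti rigidity. One small slip in your parenthetical: in the paper's normalization the identity is $S+|\nabla f|^2=-f$ (equation (\ref{eqn of naf})), not $|\nabla f|^2+S=f+\mathrm{const}$; the conclusion $f\leq n/2$ that you actually use is nonetheless correct.
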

Using Theorem \ref{sufficient condition for S geq 0}, we give another sufficient condition for nonnegativity of the scalar curvature.
\begin{thm}\label{non -ve scalar bdd preserved for conical data}
Let $(M^n,g,f)$ be a complete noncompact gradient expanding Ricci soliton with dimension $n\geq 3$ and bounded scalar curvature $S$. Suppose the following conditions are satisfied:
\begin{enumerate}
\item $\lim_{x\to\infty}f=-\infty$;
\item \label{scalar curvature of cone at infinity}$\lim\inf_{x\to\infty}r^2S\geq0$.
\end{enumerate}
Then the scalar curvature $S\geq 0$ on $M$. In particular, $M$ is connected at infinity.
\end{thm}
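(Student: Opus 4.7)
The plan is to reduce the statement to Theorem~\ref{sufficient condition for S geq 0} by verifying its integrability hypothesis $\int_M S_-\,dv_g<\infty$. Once this is in hand, the hypothesis $\lim_{x\to\infty}f=-\infty$ rules out the compact Einstein alternative, so Theorem~\ref{sufficient condition for S geq 0} forces $S\ge 0$ on $M$; the connectedness at infinity is then the standard consequence for gradient expanders with non-negative scalar curvature (\cite{MunteanuWang-2012}, \cite{ChenDeruelle-2015}).

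To verify the integrability I first set up the geometry. Writing $v=\frac{n}{2}-f$ and using the conservation law $|\nabla f|^{2}+S=v$ (after the standard normalization) together with $\Delta f=-\frac{n}{2}-S$, the boundedness of $S$, and the properness of $v$, one obtains at infinity the asymptotics $|\nabla f|^{2}\sim v$ and $r(x)\sim 2\sqrt{v(x)}$, together with at most Euclidean volume growth $\mathrm{Vol}(B_r)\le Cr^{n}$ (via the divergence theorem applied on $\{v\le R\}$ combined with the coarea formula). Hypothesis~(\ref{scalar curvature of cone at infinity}) supplies the preliminary pointwise decay $S_-(x)=o(r(x)^{-2})$ as $r(x)\to\infty$.

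The main analytical step is to upgrade this polynomial decay to the exponential-type estimate $S_-\le Cv^{1-n/2}e^{-v}$ outside a compact set. The scalar curvature satisfies $\Delta_f S+S=-2|\Ric|^{2}\le 0$, and a direct calculation as in the proof of Theorem~\ref{scalar lower bdd in expander} shows that the radial barrier $h(v):=v^{1-n/2}e^{-v}$ satisfies $\Delta_f h+h=-h''(v)\,S$, which is lower order at infinity. Substituting $w:=S/h$ converts the equation for $S$ into a PDE for $w$ with no zero-order term; on annular regions $\{R_0\le v\le R\}$, a weak maximum principle / Moser iteration argument, combining the Euclidean volume growth with the preliminary decay from hypothesis~(\ref{scalar curvature of cone at infinity}), then propagates the boundary values of $w$ inward to yield $S_-\le Ch(v)$ outside a compact set. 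A coarea calculation
\begin{equation*}
\int_M S_-\,dv_g\le C\int_{R_0}^\infty R^{1-n/2}e^{-R}\,\frac{A(R)}{\sqrt{R}}\,dR\le C\int_{R_0}^\infty e^{-R}\,dR<\infty,
\end{equation*}
using $|\nabla f|\sim\sqrt{R}$ on $\{v=R\}$ and $A(R)\sim R^{(n-1)/2}$ from the Euclidean volume growth, closes the argument.

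The hard part will be the maximum-principle step: the positive zero-order coefficient of $\Delta_f+1$ precludes a direct comparison of $S$ with $-\varepsilon h$, and the boundary decay of $w$ coming from hypothesis~(\ref{scalar curvature of cone at infinity}) is much weaker than the interior target. The substitution $w=S/h$ removes the zero-order obstruction at the cost of introducing a nonlinear $h''w^{2}$ contribution; successfully propagating the weak boundary decay into the exponential interior estimate will require a careful iteration exploiting both the precise structure of the drift $\nabla h/h$ on the expander and the Euclidean volume growth established above.
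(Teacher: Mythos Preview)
Your high-level plan coincides with the paper's: reduce to Theorem~\ref{sufficient condition for S geq 0} by proving the exponential decay $S_-\le Cv^{1-n/2}e^{-v}$ and then integrating against a volume bound. The paper packages the decay step as Proposition~\ref{prop for norm sq Ric bdd by S and exp term}, and the barrier $h(v)=v^{1-n/2}e^{-v}$ you single out is exactly the right one.

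Where your outline is incomplete is the step you yourself flag as hard, and your tentative method for it is not the one that works. The paper does \emph{not} use the substitution $w=S/h$ or any Moser iteration; it runs a direct one-shot comparison, but with a device (following Deruelle~\cite{Deruelle-2017}) that your sketch is missing. One passes to the modified drift Laplacian $\Delta_{f+2\ln v-2av^{-1}}$ and compares $e^{-a/v}(-S)v$ with $\beta\,e^{-b/v}v^{2-n/2}e^{-v}$. The extra factors $e^{-a/v}$ and $e^{-b/v}$ tend to $1$ at infinity and exist only to repair the zero-order sign: for $a\ge 4$ the first function becomes a \emph{sub}solution on $\{S\le 0\}$ near infinity, while for $b$ large (depending on $a,n$) the second becomes a strict \emph{super}solution there. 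The outer boundary condition is $\limsup_{x\to\infty}(-S)v\le 0$, which follows from hypothesis~(\ref{scalar curvature of cone at infinity}) together with $v\le C(r+1)^2$; a single application of the maximum principle on the annuli $B_T\setminus \overline{B_{R_0}}$ then yields the estimate. Your Moser-iteration idea would be hard to implement here, since the target is exponentially smaller than the input $S_-=o(v^{-1})$ and iteration does not naturally bridge such a gap; the $e^{-a/v}$ weight is the missing trick that turns the problem into a clean comparison.

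Two smaller points. For the volume bound the paper simply quotes the Munteanu--Wang estimate $\mathrm{Vol}(B_R)\le Ce^{\sqrt{n-1}R}$ from~\cite{MunteanuWang-2014}; this already suffices against $S_-\le Ce^{-\delta r^2}$ (obtained via Lemma~\ref{quadratic estimates of v in exp under bdd S}), so the Euclidean growth you assert is not needed and the divergence-theorem argument you sketch for it is not required. Your way of ruling out the Einstein alternative is correct: $\Ric=-\tfrac{1}{2}g$ forces $\nabla^2 f=0$, hence $|\nabla f|$ constant, hence $f$ constant by \eqref{eqn of naf}, contradicting properness.
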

\begin{remark} \label{r2 is optimal} It can be seen from the proof of Theorem \ref{non -ve scalar bdd preserved for conical data} that $S\geq 0$ holds under a weaker assumption on $S$, namely $\limsup_{x\to\infty}v^{-1}S<1$ instead of $S$ being bounded, where $v=\frac{n}{2}-f$. Under the conditions of Corollary \ref{non -ve scalar bdd for Ricci flat conical expander} or Theorem \ref{non -ve scalar bdd preserved for conical data}, it follows from Theorem \ref{scalar lower bdd in expander} that the scalar curvature $S$ satisfies (\ref{ineq for scalar lower bdd in expander}) provided that $M$ is not flat. In view of the negatively curved Bryant's expander and Cao's K\"{a}hler expander, we know that the quadratic factor $r^2$ in Corollary \ref{non -ve scalar bdd for Ricci flat conical expander} and Theorem \ref{non -ve scalar bdd preserved for conical data} is sharp (see \cite{Cao-1997}, \cite{Bryant-2005}, \cite{Chowetal-2007} and \cite{DengZhu-2015}). We also see from the F.I.K. expander \cite{FeldmanIlmanenKnopf} that analogous result is not true if one replaces the scalar curvature $S$ by the Ricci curvature $\Ric$.
\end{remark}

We then study the curvature estimates for expanders of low dimensions, namely dimensions $3$ and $4$. Motivated by the estimates of the curvature tensor of shrinker and steadier in \cite{MunteanuWang-2015.3}, \cite{CaoCui-2014}, \cite{MunteanuWang-2017} and \cite{MunteanuWang-2016}, we prove some analogs for $3$ dimensional gradient expander.

\begin{thm}\label{curv estimate in dim 3 exp} Let $(M^3,g,f)$ be a $3$ dimensional complete non-compact gradient expanding Ricci soliton. Then the curvature tensor $\Rm$ is bounded if the scalar curvature $S$ is bounded. Moreover, the following hold:
\begin{enumerate}[label=(\alph*)]
\item \label{3 dim S control Rm exp}If $S$ is nonnegative and bounded, then
$$|\Rm|\leq c\sqrt{S} \text{  on  } M,$$
for some positive constant $c$;
\item \label{3 dim S decay implies Rm decay exp}If $S\to 0$ as $x\to\infty$, then $\lim_{x\to\infty}|\Rm|=0$.
\end{enumerate}

\end{thm}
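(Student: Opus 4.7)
My strategy is to exploit the three-dimensional algebraic identity
\be
|\Rm|^2 = 4|\Ric|^2 - S^2,
\ee
which reduces the whole theorem to controlling $|\Ric|^2$ by $S$. For an expander with $\lambda = -\frac{1}{2}$, I would first record the Bochner identities
\bee
\D S = -S - 2|\Ric|^2, \qquad \D R_{ij} = -R_{ij} + 2 R_{ikjl}R^{kl} - 2 R_{ik}R^k{}_j,
\eee
and then use the three-dimensional formula expressing $R_{ikjl}$ in terms of $\Ric$ and $S$ to eliminate the curvature operator from $\D|\Ric|^2$, producing a schematic equation of the form
\bee
\D |\Ric|^2 = 2|\na \Ric|^2 - 2|\Ric|^2 + 2S|\Ric|^2 - 6\,\tr(\Ric^3).
\eee

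To prove that $\Rm$ is bounded whenever $S$ is bounded, I would apply the weighted maximum principle to an auxiliary quotient of the form $w := |\Ric|^2/(S+\e)^\a$, with a small regularization $\e > 0$ and a carefully chosen exponent $\a \in (0,1)$. The function $v = \frac{n}{2}-f = S + |\na f|^2$ is proper in all cases of interest, so its sublevel sets furnish natural compact domains for the maximum principle without any ad hoc spatial cutoff. At an interior maximum of $w$ one has $\na |\Ric|^2 = \a |\Ric|^2 (S+\e)^{-1}\na S$, which lets the good gradient term from $\D|\Ric|^2$ cancel part of the cross term arising from $\D(S+\e)^\a$. Combining the two Bochner identities at the maximum point then trades the cubic Ricci term for $w$ times lower-order quantities, and the bound on $S$ yields a bound on $w$ independent of $\e$ and of the sublevel set.

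For part (a), the same quotient argument carried out with $\a = 1$ and $\e \to 0$ sharpens the output to $|\Ric|^2 \leq cS$, whence $|\Rm| \leq c\sqrt{S}$ by the identity above. For part (b), I would localize the same argument to geodesic annuli $\{R \leq r \leq 2R\}$ with $R \to \infty$: since $S \geq -\frac{n}{2}$ from (\ref{general bdd for S in expander}) and $\sup_{r \geq R} S \to 0$ by assumption, $\Rm$ is bounded by the first part of the theorem, and the localized maximum principle on each annulus forces $w$ and hence $|\Ric|^2$ to tend to zero at infinity, yielding $|\Rm| \to 0$.

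The main obstacle is the sign-indefinite cubic term $\tr(\Ric^3)$, which is of the same order as the dissipative term $-2|\Ric|^2$ once $w$ becomes large. The idea is that in dimension three the elementary inequality $|\tr(\Ric^3)| \leq |\Ric|^3$ together with the maximum-point gradient relation above can absorb the bad cubic into the nonnegative combination $S|\Ric|^2 + |\Ric|^2$; making this absorption uniform near points where $S$ is small or vanishes is the delicate computational step, and it will dictate the exact choice of $\a$ and the order in which the limits $\e \to 0$ and $V \to \infty$ are carried out.
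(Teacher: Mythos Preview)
Your strategy for the main claim and for part (a) --- studying a quotient $|\Ric|^2/(S+\e)^\a$ and applying a maximum principle --- is exactly what the paper does (with $\a=1$ and $\e=0$ for (a), and with the linear combination $|\Ric|-AS$ for the sign--indefinite case). However, your proposed compactness mechanism is wrong: you assert that $v=\tfrac{n}{2}-f$ ``is proper in all cases of interest,'' but none of the hypotheses of the theorem imply this. Even when $S\ge 0$ one only has $v\ge \tfrac{n}{2}$, not $v\to\infty$; and for part (b) the properness of $f$ is the content of Corollary~\ref{proper f in 3 dim}, which is deduced \emph{from} part (b), so invoking it here would be circular. The paper avoids this by using distance cutoffs $\phi=\psi(r/R)$ together with the Wei--Wylie $f$-Laplacian comparison $\Delta_f r\le \tfrac12 r+\beta$, which requires no information on the behaviour of $f$. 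This is an easy fix for (a) and the main claim, but as written your argument has no exhaustion to run the maximum principle on.

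Part (b) is where your proposal has a genuine gap. Localizing the quotient argument to an annulus $\{R\le r\le 2R\}$ cannot force $|\Ric|^2\to 0$: the maximum of $\phi^2 w$ may well be attained in the transition region near the inner boundary $\{r=R\}$, where you only know $|\Ric|$ is bounded (from the first part) while $(S+\e)^{-\a}$ is of order $\e^{-\a}$; the resulting bound on $w$ blows up as $\e\to 0$ and gives no decay. More generally, the elliptic inequality $\Delta_f|\Ric|^2\ge -2|\Ric|^2-c|\Ric|^3$ does not by itself propagate smallness from infinity inward past a fixed compact set. The paper takes a completely different route: once $|\Rm|$ is bounded (first part), Shi's estimates bound all derivatives, and a blow-up along a hypothetical sequence $y_k\to\infty$ with $|\Ric|(y_k)\ge\varepsilon_0$ produces, via Hamilton's compactness, a limit Ricci flow which is scalar-flat (because $S\to 0$ and the flow lines of $\nabla f$ stay far out, by Lemma~\ref{dist estimate along grad flow in exp}) but not Ricci-flat --- contradicting $\partial_t S=\Delta S+2|\Ric|^2$. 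This parabolic compactness step is the missing idea in your outline.
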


As an application, we show that $f$ is proper if the scalar curvature $S$ decays at infinity in dimension three.
\begin{cor}\label{proper f in 3 dim}Let $(M^3,g,f)$ be a $3$ dimensional complete non-compact gradient expanding Ricci soliton. Suppose that $\displaystyle\lim_{x\to \infty} S=0$. Then $\displaystyle\lim_{x\to \infty} f=-\infty$, furthermore
\be\label{f is proper eqn in 3 dim exp}
\lim_{x\to \infty} \frac{4f(x)}{r^2(x)}=-1.
\ee
\end{cor}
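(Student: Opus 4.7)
The plan is to integrate the expander Hessian equation twice along minimizing geodesics and control the error. My first step is to invoke Theorem \ref{curv estimate in dim 3 exp}(b): since $\lim_{x\to\infty}S=0$, that theorem forces $|\Rm|(x)\to 0$ as $x\to\infty$, which together with the pointwise inequality $|\Ric|\le \sqrt{n}|\Rm|$ yields $|\Ric|(x)\to 0$ at infinity. The boundedness part of Theorem \ref{curv estimate in dim 3 exp} also guarantees $|\Rm|$ (hence $|\Ric|$) is bounded on all of $M$, since $S$ is continuous and vanishes at infinity.

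Next, fix a basepoint $p_0$ and, for each $x\in M$ with $R:=r(x)$ large, choose a unit speed minimizing geodesic $\gamma:[0,R]\to M$ from $p_0$ to $x$. The soliton equation $\nabla^2 f=-\tfrac12 g-\Ric$ gives
\begin{equation*}
(f\circ\gamma)''(s)=-\tfrac12-\Ric(\gamma'(s),\gamma'(s)),
\end{equation*}
so integrating twice over $[0,R]$ produces
\begin{equation*}
f(x) = f(p_0)+R\,\langle\nabla f(p_0),\gamma'(0)\rangle-\frac{R^2}{4}-\int_0^R\!\int_0^s \Ric(\gamma'(t),\gamma'(t))\,dt\,ds.
\end{equation*}
The first two terms on the right are $O(1)$ and $O(R)$ respectively, so they vanish after division by $R^2$. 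The dominant contribution is $-R^2/4$, and the remaining task is to show the double Ricci integral is $o(R^2)$.

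For this, set $\varepsilon(t):=\sup\{|\Ric|(y):d(y,p_0)\ge t\}$, which is finite by the boundedness of $|\Ric|$, non-increasing in $t$, and satisfies $\varepsilon(t)\to 0$. Because $\gamma$ is minimizing from $p_0$, $d(\gamma(t),p_0)=t$, hence $|\Ric(\gamma(t))|\le\varepsilon(t)$. Fubini then yields
\begin{equation*}
\left|\int_0^R\!\int_0^s \Ric(\gamma',\gamma')\,dt\,ds\right|\le\int_0^R (R-t)\varepsilon(t)\,dt\le R\int_0^R \varepsilon(t)\,dt,
\end{equation*}
and a Cesaro-type estimate (split at a threshold $T$ beyond which $\varepsilon<\delta$) gives $R^{-1}\int_0^R \varepsilon\to 0$. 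Thus the double integral is $o(R^2)$; dividing the expression above by $R^2$ and letting $x\to\infty$ produces (\ref{f is proper eqn in 3 dim exp}) and in particular $\lim_{x\to\infty}f=-\infty$. I expect the main obstacle to be exactly this $o(R^2)$ control on the integrated Ricci term, and it is only manageable because of the uniform decay $|\Ric|\to 0$ supplied by Theorem \ref{curv estimate in dim 3 exp}(b); every other ingredient is a direct consequence of the soliton identity.
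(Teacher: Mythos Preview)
Your proof is correct and follows essentially the same route as the paper: invoke Theorem \ref{curv estimate in dim 3 exp}\ref{3 dim S decay implies Rm decay exp} to get $|\Ric|\to 0$, then integrate the Hessian identity $\nabla^2 f=-\tfrac12 g-\Ric$ twice along minimizing geodesics from $p_0$. The paper packages the geodesic integration as Proposition \ref{v equiv r2 under Ric lower bdd decay to 0} (stated in all dimensions under $\liminf_{x\to\infty}\Ric\ge 0$) and obtains the upper bound separately from the gradient estimate \eqref{naf nav sqrtv control linearly in exp}, whereas you handle both directions at once via the exact equation and a Ces\`aro estimate; the substance is the same.
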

\begin{remark}In view of Theorem \ref{n dim S control Ric if Rm bdd exp} and Proposition \ref{v equiv r2 under Ric lower bdd decay to 0}, we see that (\ref{f is proper eqn in 3 dim exp}) is also true in higher dimensions if in addition $|\Rm|$ is bounded on $M$.
\end{remark}
Munteanu-Wang \cite{MunteanuWang-2015.3} showed that any $4$ dimensional complete gradient shrinker must have bounded curvature if the scalar curvature is bounded (see also \cite{CaoCui-2014} for the estimates in steady soliton). We prove that it is also true for gradient expanding soliton if in addition the potential function $f\to -\infty$ as $r\to \infty$.
\begin{thm}\label{bdd S and f proper implies bdd Rm}Let $(M^4,g,f)$ be a $4$ dimensional complete non-compact gradient expanding Ricci soliton with bounded scalar curvature. Suppose that $\lim_{r\to \infty} f=-\infty.$
Then the curvature tensor $\Rm$ is bounded.
\end{thm}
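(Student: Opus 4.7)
The plan is to adapt Munteanu--Wang's $4$-dimensional shrinker curvature bound to the expander setting, with the hypothesis $\lim_{r\to\infty}f=-\infty$ playing the role of the automatic quadratic growth of $f$ in the shrinker case. The basic inputs are the drift-Laplacian identities for a gradient expander with $\lambda=-\tfrac{1}{2}$,
\begin{equation*}
\Delta_f S=-S-2|\Ric|^2,\qquad \Delta_f R_{ij}=-R_{ij}-2R_{ikjl}R^{kl},\qquad \Delta_f \Rm=-\Rm+\Rm\ast\Rm,
\end{equation*}
together with the normalization $|\nabla f|^2+S=v$ and the derived identity $\Delta_f v=\tfrac{n}{2}+v$, where $v=\tfrac{n}{2}-f$. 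Properness of $f$ gives $v\to\infty$ at infinity, and the linear equation for $v$ furnishes a natural test-function weight $(v+C)^{-a}$ for maximum principle arguments on sublevel sets $\{v\le R\}$, replacing the weight $r^2/4$ used in the shrinker case.

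I first bound the Ricci tensor. Contracting the identity for $\Delta_f R_{ij}$ yields
\begin{equation*}
\Delta_f|\Ric|^2 = 2|\nabla\Ric|^2 - 2|\Ric|^2 - 4R_{ikjl}R^{ij}R^{kl}.
\end{equation*}
Using the dimension-four decomposition of $\Rm$ into its Weyl, traceless Ricci, and scalar parts, the cubic term splits into an $|\Ric|^3$-type piece controlled by the bounded $S$, a mixed piece estimated by $\epsilon|W|^2|\Ric|+C_\epsilon|\Ric|^3$, and a scalar-dominated piece. Applying the maximum principle to $|\Ric|^2/(v+C)^a$ with appropriate $a>0$, together with the Kato inequality for $\Ric$ and the linear growth $\Delta_f v=\tfrac{n}{2}+v$, absorbs the bad terms and gives a uniform bound on $|\Ric|$.

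It then remains to bound the Weyl tensor, after which the dimension-four decomposition $|\Rm|^2\le c(|W|^2+|\Ric|^2)$ completes the proof. Using the self-dual/anti-self-dual splitting $W=W^+\oplus W^-$, the sharp Kato inequality $|\nabla W^\pm|^2\ge\tfrac{5}{3}|\nabla|W^\pm||^2$, and the dimension-four algebraic bound on the cubic self-interaction of $W^\pm$, one derives a differential inequality of the form
\begin{equation*}
\Delta_f|W^\pm|^2\ge \tfrac{10}{3}|\nabla|W^\pm||^2 - 2|W^\pm|^2 - C|W^\pm|^3 - C|W^\pm|(|\Ric|^2+1).
\end{equation*}
Since $|\Ric|$ has just been bounded, a maximum principle argument on $|W^\pm|^2/(v+C)^a$ forces $|W^\pm|$ bounded. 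The main obstacle is precisely this Weyl estimate: unlike $|\Ric|^2$, the Weyl tensor enjoys no direct scalar identity analogous to $\Delta_f S=-S-2|\Ric|^2$, and its cubic self-interaction is borderline critical. Only the dimension-four algebra (self-dual splitting and refined Kato) combined with the linearly growing auxiliary function $v$ supplied by $\lim_{r\to\infty}f=-\infty$ makes the maximum principle close.
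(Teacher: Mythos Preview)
Your outline has a structural gap in the Ricci step that the paper's proof avoids by a different mechanism. In the inequality
\[
\Delta_f|\Ric|^2 = 2|\nabla\Ric|^2 - 2|\Ric|^2 - 4R_{ikjl}R^{ij}R^{kl},
\]
the Weyl contribution to the cubic term is of size $|W|\,|\Ric|^2$, which you rewrite as $\epsilon|W|^2|\Ric|+C_\epsilon|\Ric|^3$. But at this stage $|W|$ is not yet bounded, and neither the weight $(v+C)^{-a}$ nor the gradient term $2|\nabla\Ric|^2$ can absorb a quantity quadratic in $|W|$; the maximum-principle argument on $|\Ric|^2/(v+C)^a$ therefore does not close. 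The same difficulty recurs in your Weyl step: the inequality you write for $|W^\pm|^2$ carries a genuine $-C|W^\pm|^3$ term, and a maximum principle on $|W^\pm|^2/(v+C)^a$ produces, at an interior maximum, only $|W^\pm|^2\lesssim |W^\pm|^3+1$, which is useless without an a priori smallness condition. (There are also two minor slips: on an expander one has $|\nabla f|^2+S=-f=v-\tfrac{n}{2}$ and $\Delta_f v=v$, not $v+\tfrac{n}{2}$.)

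The paper's argument, following Munteanu--Wang, sidesteps the Weyl tensor entirely by using the four-dimensional soliton identity
\[
|\Rm|\le A_0\Big(|\Ric|+\frac{|\nabla\Ric|}{|\nabla f|}\Big),
\]
which comes from $R_{ijkl}f_l=\nabla_i R_{jk}-\nabla_j R_{ik}$ and the algebraic fact that in dimension four the full curvature is determined by its values on any unit vector together with the Ricci tensor. Substituting this into the cubic term gives $|\Rm|\,|\Ric|^2\le A_0|\Ric|^3+A_0|\Ric|^2|\nabla\Ric|/|\nabla f|$; the second piece is absorbed into $2|\nabla\Ric|^2$ by Cauchy--Schwarz at the cost of $A_0^2|\Ric|^4/|\nabla f|^2$, and this is where $\lim f=-\infty$ enters, since $|\nabla f|^2=-f-S\to\infty$ makes that coefficient small outside a compact set. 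A cutoff maximum-principle on $F(S)|\Ric|^2$ with $F(S)=(S+3L)^{-1/3}$ then bounds $|\Ric|$. For the second step the same lemma is read backwards as $|\nabla\Ric|^2\ge \tfrac{1}{2A_0^2}|\Rm|^2-|\Ric|^2$, so $\Delta_f|\Ric|^2$ acquires a \emph{good} $+c|\Rm|^2$ term; combining with $\Delta_f|\Rm|\ge -|\Rm|-c|\Rm|^2$ and applying the maximum principle to $|\Rm|+\lambda|\Ric|^2$ for large $\lambda$ finishes the bound. Your proposal is missing precisely this lemma; the self-dual splitting and refined Kato inequality for $W^\pm$ do not by themselves supply the positive quadratic term needed to beat the cubic self-interaction.
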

It was proved by Deng-Zhu \cite{DengZhu-2015} and Deruelle \cite{Deruelle-2017} that any gradient expander with non-negative Ricci curvature must have bounded scalar curvature (their arguments work well if $\Ric \geq 0$ outside some compact subset of $M$). Together with Remark \ref{suff for proper of f in general dim exp}, we have the following corollary.
\begin{cor}
Let $(M^4,g,f)$ be a $4$ dimensional complete noncompact gradient expanding Ricci soliton with $\Ric\geq 0$ outside some compact subset of $M$. Then it has bounded curvature tensor.
\end{cor}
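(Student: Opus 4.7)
The plan is to assemble this corollary directly from three ingredients already available in the paper, so the proof is a short concatenation rather than an independent argument. I would first invoke the Deng-Zhu \cite{DengZhu-2015} / Deruelle \cite{Deruelle-2017} theorem, which is cited in the paragraph immediately preceding the corollary: since $\Ric \geq 0$ outside some compact subset of $M$, their argument applies and gives a uniform bound on the scalar curvature $S$ of $(M^4,g,f)$.

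Next I would verify that $f$ is proper with $\lim_{r\to\infty} f=-\infty$. For this I appeal to Remark~\ref{suff for proper of f in general dim exp}: the stated sufficient condition is that $\Ric \geq (\varepsilon-\tfrac{1}{2})g$ outside a compact set for some $\varepsilon>0$. Taking $\varepsilon=\tfrac{1}{2}$, the hypothesis $\Ric\geq 0$ outside a compact subset is exactly of this form, so the remark yields $\lim_{r\to\infty} f=-\infty$.

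With these two facts in hand, $(M^4,g,f)$ is a $4$-dimensional complete noncompact gradient expander with bounded scalar curvature and with $\lim_{r\to\infty} f=-\infty$, which is precisely the hypothesis of Theorem~\ref{bdd S and f proper implies bdd Rm}. Applying that theorem concludes that $|\Rm|$ is bounded on $M$.

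Since the corollary is really just a synthesis of already-established statements, there is no serious obstacle; the only point that deserves a sanity check is that the conclusion of the cited Deng-Zhu/Deruelle result is a \emph{global} bound on $S$ (not merely an estimate outside the compact region where $\Ric\geq 0$), and that the properness conclusion in Remark~\ref{suff for proper of f in general dim exp} matches verbatim the hypothesis $\lim_{r\to\infty} f=-\infty$ required by Theorem~\ref{bdd S and f proper implies bdd Rm}. Both checks are immediate from the way those statements are phrased, so no further work is needed.
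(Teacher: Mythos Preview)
Your proposal is correct and matches the paper's own argument exactly: the corollary is deduced from the Deng--Zhu/Deruelle boundedness of $S$, Remark~\ref{suff for proper of f in general dim exp} for the properness of $f$, and then Theorem~\ref{bdd S and f proper implies bdd Rm}. No further work is needed.
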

Using Shi's estimate \cite{ChowLuNi-2006}, Theorem \ref{n dim S control Ric if Rm bdd exp} and Lemma \ref{Rm to Rc}, we establish the equivalence of the properness of $f$ and the curvature decay at infinity.
\begin{cor}
Let $(M^4,g,f)$ be a $4$ dimensional complete noncompact gradient expanding Ricci soliton with $\lim_{x\to\infty}S(x)=0$. Then $\lim_{x\to\infty} f(x)=-\infty$ if and only if $\lim_{x\to\infty}|\Rm|(x)=0$.
\end{cor}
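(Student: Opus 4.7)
The plan is to prove the two implications separately, chaining together several results established earlier in the paper. The forward direction is the more substantial one; the reverse direction is essentially a direct application of Remark \ref{suff for proper of f in general dim exp}.

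For $f\to -\infty \Rightarrow |\Rm|\to 0$: First note that the hypothesis $\lim_{x\to\infty}S=0$, together with continuity of $S$, forces $S$ to be bounded on all of $M$. Since $n=4$, $S$ is bounded, and $f\to -\infty$, Theorem \ref{bdd S and f proper implies bdd Rm} yields that $|\Rm|$ is bounded globally. With $|\Rm|$ bounded, Shi's local derivative estimate applied to the associated self-similar Ricci flow gives uniform pointwise bounds on $|\na^k \Rm|$ for each $k\geq 0$. I then invoke Theorem \ref{n dim S control Ric if Rm bdd exp}, which controls $|\Ric|$ by $S$ whenever $|\Rm|$ is bounded; combined with $S\to 0$ at infinity, this gives $|\Ric|\to 0$. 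Finally, Lemma \ref{Rm to Rc} upgrades decay of $\Ric$ to decay of $\Rm$ using the uniform bound on $|\na \Rm|$, giving $\lim_{x\to\infty}|\Rm|=0$.

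For $|\Rm|\to 0 \Rightarrow f\to -\infty$: Since $|\Ric|\leq C_n|\Rm|$ pointwise, the hypothesis immediately yields $|\Ric|\to 0$ at infinity. Fix any $\varepsilon\in(0,\tfrac{1}{2})$; then $\Ric\geq(\varepsilon-\tfrac{1}{2})g$ holds outside a sufficiently large compact subset. By the sufficient condition recorded in Remark \ref{suff for proper of f in general dim exp}, $f$ is proper, so $\lim_{x\to\infty}f(x)=-\infty$.

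The main obstacle is the step from decay of $\Ric$ to decay of $\Rm$ in the forward direction: in dimension four the Weyl tensor does not vanish, so $\Rm$ is not algebraically determined by $\Ric$. This obstruction is exactly what Lemma \ref{Rm to Rc} addresses, exploiting the soliton structure (via the identity relating $\na \Ric$ to $\Rm$ contracted with $\na f$) together with Shi's higher-derivative bounds. Modulo invoking these previously established tools, the remaining steps are direct.
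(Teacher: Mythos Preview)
Your proof is correct and follows essentially the same route as the paper's sketch: for the forward direction you use Theorem \ref{bdd S and f proper implies bdd Rm} to get bounded curvature, Shi's estimate to bound $|\na \Ric|$, Theorem \ref{n dim S control Ric if Rm bdd exp} to get $|\Ric|\to 0$, and then Lemma \ref{Rm to Rc} together with $|\na f|^2=-f-S\to\infty$ to conclude $|\Rm|\to 0$; for the reverse direction your use of Remark \ref{suff for proper of f in general dim exp} (or equivalently Proposition \ref{v equiv r2 under Ric lower bdd decay to 0}) is exactly what is needed.
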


As an application of the previous curvature estimates, we study the gap theorem of expanding soliton. Gap phenomenon of manifold has long been an interesting problem in geometry. A special case of the problem is concerned with the flatness of noncompact manifold, more precisely, for a non-compact manifold with curvature decaying sufficiently fast at infinity, is it necessarily flat? The tangent cone at infinity of gradient expander with $\lim_{x\to \infty}r^2|\Rm|=0$ was proven to be flat by Chen \cite{Chen-2012} and Chen-Deruelle \cite{ChenDeruelle-2015}. The expander itself is also flat if in addition $\Ric\geq 0$ (see \cite{BandoKasueNakajima-1989} and \cite{ChenDeruelle-2015}). Deng and Zhu \cite{DengZhu-2015} proved that a K\"{a}hler gradient expander with complex dimension $n\geq 2$, $\Ric\geq 0$ and $\lim_{r\to \infty} r^2S=0$ is isometric to $\C^n$ (see \cite{MokSiuYau-1981}, \cite{GreeneWu-1982} and \cite{BandoKasueNakajima-1989} for more results on the gap theorems of general Riemannian manifolds). In either cases, Ricci curvature is assumed to be non-negative. Using the positive mass theorem, asymptotic curvature estimates in \cite{Deruelle-2017}, Theorems \ref{sufficient condition for S geq 0} and \ref{curv estimate in dim 3 exp}\ref{3 dim S decay implies Rm decay exp}, we are able to prove a gap theorem for $3$ dimensional gradient expander by only imposing condition on the scalar curvature.

\begin{thm}\label{gap thm for 3 dim exp} Let $(M^3,g,f)$ be a $3$ dimensional complete non-compact gradient expanding Ricci soliton. Suppose the scalar curvature $S$ satisfies
$$\displaystyle\lim_{x\to \infty} r^2(x)S(x)=0.$$
Then $M$ is isometric to $\R^{3}$.
\end{thm}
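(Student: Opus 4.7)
My plan is to combine the three-dimensional curvature estimates with Deruelle's asymptotic analysis and the positive mass theorem to force $(M,g)$ to be flat $\mathbb{R}^{3}$.

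First, the hypothesis $\lim_{x\to\infty} r^{2}(x)S(x)=0$ implies in particular $S\to 0$ at infinity, so Theorem \ref{curv estimate in dim 3 exp}\ref{3 dim S decay implies Rm decay exp} upgrades this to $|\Rm|\to 0$. By Corollary \ref{proper f in 3 dim} the potential $f$ is automatically proper with $4f/r^{2}\to -1$, so $v=\tfrac{3}{2}-f\sim r^{2}/4$ at infinity. Feeding the curvature decay into the asymptotic curvature estimates of Deruelle \cite{Deruelle-2017}, I expect to conclude that $(M,g)$ is asymptotic to a metric cone $C(Y^{2})$ at infinity with a quantitative rate on $|\Rm|$. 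Since $r^{2}S\to 0$ forces the scalar curvature of the link to satisfy $\mathrm{sc}_{Y}=(n-1)(n-2)=2$, the link $Y$ is a spherical space form $S^{2}/\Gamma$, and hence $(M,g)$ is asymptotically locally Euclidean with tangent cone $\mathbb{R}^{3}/\Gamma$.

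Next I argue that $S\geq 0$ on all of $M$. Using the quantitative rate for $|\Rm|$ from Deruelle together with the Euclidean-type volume growth coming from the ALE structure, the negative part $S_{-}$ decays fast enough at infinity to produce $\int_{M}S_{-}\,dv_{g}<\infty$. Theorem \ref{sufficient condition for S geq 0} then applies; the Einstein alternative with scalar curvature $-\tfrac{n}{2}$ and finite volume is incompatible with the ALE asymptotics (equivalently with $f\to -\infty$), so I am left with $S\geq 0$ everywhere on $M$.

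Finally, I invoke the positive mass theorem of Schoen--Yau (in its ALE/orbifold form when $\Gamma$ is nontrivial): the ADM mass of the ALE $3$-manifold $(M,g)$ with $S\geq 0$ is non-negative, with equality characterizing flat $\mathbb{R}^{3}/\Gamma$. Using the soliton identity $\Ric+\Hess f=-\tfrac{1}{2}g$ together with the asymptotics $f\sim -r^{2}/4$ and the vanishing $r^{2}S\to 0$, I plan to express the ADM mass as a boundary integral over large geodesic spheres and show that this boundary integral tends to $0$ as the radius goes to infinity. Combined with the PMT lower bound this forces the mass to vanish, so the rigidity part of PMT gives that $(M,g)$ is flat $\mathbb{R}^{3}/\Gamma$. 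The globally defined smooth potential $f$ with $f\sim -r^{2}/4$ then rules out nontrivial holonomy, yielding $M\cong \mathbb{R}^{3}$. The main obstacle I anticipate is the mass computation: one must extract the precise rate of convergence from Deruelle's asymptotic estimate and match it against the boundary-integral formula for the ADM mass, so that the hypothesis on $S$ alone (rather than a stronger hypothesis on $\Ric$ or $\Rm$) suffices to drive the mass to zero.
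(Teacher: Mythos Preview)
Your overall strategy---curvature decay, Deruelle's asymptotics, Theorem~\ref{sufficient condition for S geq 0} for $S\geq 0$, then positive mass theorem---matches the paper's. Two points need correction, however.

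First, to invoke Deruelle's asymptotic estimates you need $r^{2}|\Rm|\to 0$, not merely $|\Rm|\to 0$. The paper closes this gap with a separate bootstrapping lemma (Lemma~\ref{r2Rm to 0}): from $|\Rm|=o(1)$ and Shi's local estimate one gets $|\na\Rm|=o(1)$; since $|\na f|\sim r$ by Corollary~\ref{proper f in 3 dim}, the identity $R_{ijk\nu}=(R_{kj,i}-R_{ki,j})/|\na f|$ forces $R_{ijk\nu}=o(r^{-1})$; then vanishing of the Weyl tensor in dimension three gives $|\Rm|=o(r^{-1})$, and one iterates once more to reach $o(r^{-2})$. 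Only after this does Deruelle's theorem yield the exponential decay $|\na^{k}\Rm|=O(v^{1+\frac{k-3}{2}}e^{-v})$, which in turn makes $S_{-}$ integrable and renders the ADM mass computation trivial (the metric derivatives decay exponentially, so the boundary integral defining the mass vanishes---no delicate use of the soliton equation is required).

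Second, and more seriously, you invoke an ``ALE/orbifold form'' of the positive mass theorem to handle the case $\Gamma\neq\{1\}$. No such result is available in the form you need; the positive mass theorem can fail for ALE manifolds with nontrivial group at infinity. The paper avoids this entirely by showing \emph{before} applying PMT that the link is the round $\mathbb{S}^{2}$: once $S\geq 0$, results of Chen--Deruelle and Munteanu--Wang give that $M$ is connected at infinity, so the link $X$ is connected; if $M$ is orientable then the level sets of $f$, and hence $X$, are orientable, forcing $X\cong\mathbb{S}^{2}$ and the cone to be $\R^{3}\setminus\{0\}$. The non-orientable case is dispatched by passing to the orientable double cover, which is again an expanding soliton satisfying the hypotheses, hence flat $\R^{3}$ by the orientable case---contradicting non-orientability of $M$. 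This orientability/connectedness argument is the substantive step you are missing.
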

\begin{remark}
Analogous result is not true in higher even dimensions. The non-flat K\"{a}hler gradient expander constructed by Feldman-Ilmanen-Knopf \cite{FeldmanIlmanenKnopf} has nonnegative scalar curvature which decays exponentially in $r$.
\end{remark}
\begin{remark} Positive mass theorem was also used by Ma \cite{Ma-2011} to prove the flatness of $3$ dimensional Ricci pinched gradient expander. Using the asymptotic curvature estimates \cite{Deruelle-2017} and volume comparison theorem, Deruelle showed that any $4$ dimensional Ricci pinched gradient expander is flat (see \cite{Ma-2011} and \cite{Deruelle-2017} for more results in higher dimensions under additional assumptions). Again $\Ric\geq 0$ was assumed in both cases.
\end{remark}
Recently the positive mass theorem for asymptotically flat manifold with dimension $n\geq 9$ has been proven by Lohkamp \cite{Lohkamp-2016.0}, \cite{Lohkamp-2016}, and later also by Schoen-Yau \cite{SchoenYau-2017}. Hence similar argument for Theorem \ref{gap thm for 3 dim exp} gives the following gap theorem in higher odd dimensions without sign condition on the Ricci curvature.


\begin{thm}\label{gap thm for odd dim exp}Let $(M^{2m+1}, g, f)$ be a complete noncompact gradient expanding Ricci soliton with odd dimension $2m+1$ and $m\geq 2$. If $\lim_{x\to\infty} r^2|\Rm|=0$, then $M$ is isometric to $\R^{2m+1}$.
\end{thm}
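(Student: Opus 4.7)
The plan is to parallel the argument used for Theorem \ref{gap thm for 3 dim exp}, but now exploiting the positive mass theorem in every dimension $n\geq 3$ (Schoen--Yau for $n\leq 7$, and Lohkamp resp.\ Schoen--Yau for $n\geq 8$) together with Deruelle's asymptotic estimates for gradient expanders. The strengthened hypothesis $\lim r^{2}|\Rm|=0$ replaces the three--dimensional tool (Theorem \ref{curv estimate in dim 3 exp}\ref{3 dim S decay implies Rm decay exp}) that upgraded $S\to 0$ to $|\Rm|\to 0$, so that the same scheme goes through without any sign condition on $\Ric$.

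First I would establish the basic geometric features of the end. Since $\lim r^{2}|\Rm|=0$ in particular gives $\Ric\to 0$ at infinity, the soliton equation $\Ric+\na^{2}f=-\tfrac12 g$ yields $\na^{2}f\to-\tfrac12 g$, so $f$ is proper with $-4f/r^{2}\to 1$ and $v=\tfrac{n}{2}-f\sim r^{2}/4$. The same hypothesis forces $\lim r^{2}|\Ric|=0$, hence Corollary \ref{non -ve scalar bdd for Ricci flat conical expander} gives $S\geq 0$ on all of $M$. By the results of Chen and Chen--Deruelle, the tangent cone of $(M,g)$ at infinity is isometric to flat Euclidean space $\R^{n}$. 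Combined with Deruelle's refined decay estimates for expanders with flat asymptotic cone and Shi--type higher derivative bounds, one then upgrades this to polynomial decay of the metric to $\R^{n}$ in suitable coordinates, placing $(M,g)$ in the asymptotically flat class for which the ADM mass is well defined.

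With $(M,g)$ asymptotically flat and $S\geq 0$, the positive mass theorem in dimension $n=2m+1\geq 5$ yields $m_{\mathrm{ADM}}(g)\geq 0$, with equality iff $(M,g)\cong(\R^{n},g_{\mathrm{std}})$. To close the argument I would show $m_{\mathrm{ADM}}(g)\leq 0$ using the expander structure. The idea is to derive a mass identity: starting from the trace identity $\Delta_{f}f=-S-\tfrac{n}{2}$ and integrating against an appropriate weight on a level set $\{f=-t\}$, Stokes' theorem produces a boundary flux whose $t\to\infty$ limit matches a constant multiple of the ADM boundary integral through the asymptotic profile $f\sim -r^{2}/4$. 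The soliton identities $\na S=2\Ric(\na f,\cdot)$ and $S+|\na f|^{2}+f=\tfrac{n}{2}$ should make this boundary flux non--positive, giving $m_{\mathrm{ADM}}(g)\leq 0$. Combined with the PMT lower bound this forces $m_{\mathrm{ADM}}(g)=0$, and the rigidity clause of PMT concludes $M\cong\R^{n}$.

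The principal obstacle is the mass identification: showing that the expander integration by parts produces exactly the ADM boundary integral in the limit, with the correct sign and normalization. This relies on sharp asymptotic expansions of $f$ and $g$ beyond leading order (in the spirit of the scalar curvature at infinity of \cite{Deruelle-2017.0}) and is the technical heart of the proof. Once it is carried out, combining with PMT rigidity completes the argument.
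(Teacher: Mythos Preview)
Your outline has a genuine gap precisely at the step where you invoke Chen and Chen--Deruelle to say ``the tangent cone of $(M,g)$ at infinity is isometric to flat Euclidean space $\R^{n}$.'' Those results only tell you the asymptotic cone is \emph{flat}, i.e.\ $C(X)$ with $X$ a closed space form of curvature $1$; they do not force $X=\mathbb{S}^{n-1}$. This is exactly where the hypothesis $n=2m+1$ is used, and you never use it. The paper's argument (mirroring the proof of Theorem~\ref{gap thm for 3 dim exp}) identifies $X$ with a level set of $f$, which in an orientable $M$ is an orientable closed hypersurface; since $\dim X=2m$ is even, the only orientable spherical space form is $\mathbb{S}^{2m}$ itself (the antipodal map on $\mathbb{S}^{2m}$ reverses orientation, so $\mathbb{RP}^{2m}$ is excluded). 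The non-orientable case is then reduced to the orientable one by passing to the double cover. Without this, you cannot conclude $(M,g)$ is asymptotically flat in the sense required for PMT, and indeed the Feldman--Ilmanen--Knopf expanders in even dimensions satisfy $\lim r^{2}|\Rm|=0$ with a flat but non-Euclidean cone, so your argument as written would ``prove'' a false statement.

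Second, once the cone is identified as $\R^{n}\setminus\{0\}$, your proposed route to $m_{\mathrm{ADM}}=0$ via an integration-by-parts ``mass identity'' is unnecessary and speculative. The paper observes something much simpler: Deruelle's estimates give \emph{exponential} decay $|\na_C^{k}[(\phi^{-1})^{*}g-g_C]|_{g_C}=O(t^{k-3}e^{-t^{2}/4})$, so the integrand $\partial_{i}g_{ij}-\partial_{j}g_{ii}$ in the ADM boundary integral decays like $e^{-t^{2}/4}$ while the sphere has area $\sim t^{n-1}$; the limit is trivially zero. Rigidity in PMT then finishes. No flux identity or sign argument is needed.
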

\begin{remark} The theorem is not true in even dimensions. The non-flat Feldman-Ilmanen-Knopf K\"{a}hler expander \cite{FeldmanIlmanenKnopf} has flat asymptotic cone and satisfies $\lim_{x\to\infty} r^2|\Rm|=0$. Nonetheless, if in addition either one of the following conditions holds:
\begin{enumerate}
\item $M$ is smoothly asymptotic to the cone $(C(\mathbb{S}^{n-1}(1)), dt^2+t^2g_{\mathbb{S}^{n-1}(1)})$ in sense of \cite{Deruelle-2017};
\item $\liminf_{R\to\infty}\frac{Vol_g(B_R(p_0))}{R^n}>\frac{\omega_{n-1}}{2n}$, where  $\omega_{n-1}$ is the volume of $\mathbb{S}^{n-1}(1)$ in $\R^{n}$ with respect to the Euclidean metric;
\item $M$ is simply connected at infinity;
\item $\Ric\geq \big(\delta-\frac{1}{2}\big)g$ on $M$ for some $\delta>0$,
\end{enumerate}
then $M^{n}$ is isometric to $\R^{n}$, where $n=2m$ and $m\geq 2$. As mentioned in Remark \ref{r2 is optimal}, the quadratic factor $r^2$ in Theorems \ref{gap thm for 3 dim exp} and \ref{gap thm for odd dim exp} is optimal.
\end{remark}

The paper is organized as follows. We include all the preliminaries and computations for the proof of Theorems \ref{scalar lower bdd in expander} and \ref{scalar lower bdd in expander when Ric <0} in Section $2$. Theorems \ref{scalar lower bdd in expander} and \ref{scalar lower bdd in expander when Ric <0} will then be proved in Sections $3$ and $4$ respectively. In Section $5$, We give a proof for Theorem \ref{sufficient condition for S geq 0} using integration by part. In Section $6$, we shall show Theorems \ref{curv estimate in dim 3 exp} and \ref{gap thm for 3 dim exp}. We then justify Theorem \ref{non -ve scalar bdd preserved for conical data} in Section $7$. The proof of Theorem \ref{bdd S and f proper implies bdd Rm} will be given in the last section.

{\sl Acknowledgement}: The author would like to express his sincere appreciation to his advisor, Professor Jiaping Wang for his constant encouragement and stimulating discussion. He would also like to thank Professor Ovidiu Munteanu for his comments and interest in this work.

\section{preliminaries}
Let $(M^n,g,f)$ be an $n$ dimensional gradient expanding Ricci soliton, i.e.
$$\Ric+\na^2 f=-\frac{1}{2}g.$$
It generates a self similar solution to the Ricci flow. Indeed, let $\psi_t$ be the flow of the vector field $\frac{\na f}{1+t}$ with $\psi_0$ being the identity map. We define $g(t):=(1+t)\psi_t^*g$, then $g(t)$ is a solution to the Ricci flow for $t$ $\in (-1,\infty)$ with $g(0)=g$. The following equations for gradient expanders are known (see \cite{Chowetal-2007}, \cite{PetersenWylie-2010} and \cite{Deruelle-2017})
\be\label{trace RS eqn}
S+\Delta f= -\frac{n}{2},
\ee
\be \label{eqn of naf}
S+|\na f|^2=-f,
\ee
\be\label{eqn of f}
\Delta_f f=f-\frac{n}{2},
\ee
\be\label{eqn of S}
\D S=-S-2|\Ric|^2,
\ee

\be\label{divRm}
R_{kj,i}-R_{ki,j}=R_{ijkl}f_l,
\ee

\be\label{eqn of Rc in exp}
\D R_{ij}=-R_{ij}-2R_{iklj}R_{kl}
\ee
and
\be\label{eqn of Rm in exp}
\D \Rm=-\Rm + \Rm\ast\Rm,
\ee
where $S$ is the scalar curvature and $\D:=\Delta-\na _{\na f}$. We define a function $v$ in the following way:
\be\label{defn of v}
v:= \frac{n}{2}-f.
\ee
From (\ref{eqn of naf}) and (\ref{eqn of f}), we have
\be\label{eqn of v}
\D v=v \text{    and   }
\ee
\be \label{eqn of nav}
S+|\na v|^2=v-\frac{n}{2}.
\ee
By (\ref{general bdd for S in expander}) and (\ref{eqn of nav}), we have $|\na v|^2\leq v$ and $|\na \sqrt{v+\delta}|\leq \frac{1}{2}$ for any $\delta>0$. Integrating the inequality along minimizing geodesics and letting $\delta \to 0$, we get
\be\label{naf nav sqrtv control linearly in exp}
|\na f|=|\na v|\leq \sqrt{v}\leq \frac{1}{2}r+\sqrt{v(p_0)},
\ee
where $p_0$ is a fixed point in $M$.
Suppose that $S$ is now non-negative. Then we have by (\ref{eqn of naf}) that $f\leq 0$. Using strong minimum principle \cite{GilbargTrudinger-2001} and (\ref{eqn of S}), we conclude that $S>0$ unless $S\equiv 0$ on $M$ (i.e. flat, see \cite{PigolaRimoldiSetti-2011}). Moreover $v$ satisfies
\be\label{lower bdd of v}
v\geq \frac{n}{2}.
\ee
In general the scalar curvature may be negative, the lower bound of $v$ (\ref{lower bdd of v}) is no longer available. However, it is known that $v$ is positive under some mild conditions (see \cite{Deruelle-2017}).
\begin{lma}\label{v proper implies v>0} Let $(M^n, g, f)$ be a complete noncompact expanding gradient Ricci soliton. If $\Ric \neq -\frac{g}{2}$ somewhere on $M$, then $v>0$ on $M$.
\end{lma}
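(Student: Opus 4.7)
The plan is to combine the universal lower bound $S \geq -n/2$ for the scalar curvature of an expander (the Pigola--Rimoldi--Setti / Zhang inequality (\ref{general bdd for S in expander})) with the soliton identity (\ref{eqn of nav}) and the strong minimum principle applied to the linear elliptic operator $L := \Delta_f - 1$ acting on $v$.

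First I would observe that $v \geq 0$ everywhere on $M$. Indeed, rewriting (\ref{eqn of nav}) as
\begin{equation*}
v = \frac{n}{2} + S + |\nabla v|^2,
\end{equation*}
and using (\ref{general bdd for S in expander}) which gives $\frac{n}{2} + S \geq 0$, we immediately obtain $v \geq |\nabla v|^2 \geq 0$ pointwise.

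Next I would rule out the possibility that $v$ vanishes somewhere. Equation (\ref{eqn of v}) says $\Delta_f v - v = 0$, so $v$ is a supersolution of the linear elliptic operator $L u := \Delta_f u - u$, whose zero-order coefficient $c = -1$ is nonpositive. Since $v \geq 0$, if $v(p) = 0$ at some point $p \in M$ then $v$ attains a nonpositive interior minimum at $p$. The strong minimum principle for elliptic operators with $c \leq 0$ (see, e.g., \cite{GilbargTrudinger-2001}) then forces $v \equiv 0$ on $M$.

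Finally, if $v \equiv 0$ then $f \equiv \tfrac{n}{2}$, hence $\nabla f \equiv 0$ and $\nabla^2 f \equiv 0$, so the soliton equation (\ref{eq-RS-2}) with $\lambda = -\tfrac{1}{2}$ reduces to $\Ric \equiv -\tfrac{1}{2} g$ everywhere, contradicting the hypothesis. Thus $v > 0$ on $M$. The only mild subtlety is invoking the strong minimum principle in the correct form (supersolution of an operator with negative zero-order term), but this is standard and should present no genuine obstacle.
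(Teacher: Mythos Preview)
Your proposal is correct and follows essentially the same route as the paper: both use $v \geq |\nabla v|^2 \geq 0$ from (\ref{general bdd for S in expander}) and (\ref{eqn of nav}), then invoke the strong minimum principle for the equation $\Delta_f v = v$ to conclude that a zero of $v$ forces $v \equiv 0$, contradicting $\Ric \neq -\tfrac{1}{2}g$. The only difference is that you spell out explicitly why the strong minimum principle applies (nonpositive zero-order coefficient, nonpositive interior minimum), which the paper leaves implicit.
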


\begin{proof}
From (\ref{general bdd for S in expander}) and (\ref{eqn of nav}), we see that $v\geq |\na v|^2\geq 0$. If $v$ vanishes somewhere, then by the strong minimum principle \cite{GilbargTrudinger-2001} and (\ref{eqn of v}), $v\equiv 0$. Hence $\Ric=-\frac{g}{2}$ on $M$, which is absurd.
\end{proof}
If $\lim_{r\to \infty}f=-\infty$, then $\Ric \neq -\frac{g}{2}$ somewhere and hence $v>0$. The following lemmas are immediate consequences of the computations by Deruelle \cite{Deruelle-2017}. We include the calculations for the sake of completeness.

\begin{lma}\label{compute lemma 1}
Let $(M^n, g, f)$ be an n dimensional complete gradient expanding Ricci soliton and $p\in M$. If $v(p)>0$, then at $p$
\be\label{eqn for vS}
\Delta_{f+2\ln v}(vS)=-2v|\Ric|^2-2|\na \ln v|^2 vS.
\ee
\end{lma}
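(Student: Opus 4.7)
The plan is a direct computation using the weighted Laplacian identities already recorded in the preliminaries. Since $\Delta_{f+2\ln v} = \Delta_f - 2\nabla \ln v \cdot \nabla$, it suffices to evaluate $\Delta_f(vS)$ and then absorb the cross-term $2\nabla v\cdot \nabla S$ into $2\nabla\ln v\cdot\nabla(vS)$ modulo a $|\nabla \ln v|^2 vS$ correction.

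First I would expand via the product rule:
\begin{equation*}
\Delta_f(vS) = v\,\Delta_f S + S\,\Delta_f v + 2\nabla v\cdot \nabla S.
\end{equation*}
Plugging in $\Delta_f S = -S-2|\Ric|^2$ from (\ref{eqn of S}) and $\Delta_f v = v$ from (\ref{eqn of v}) yields the cancellation $-vS + vS = 0$, leaving
\begin{equation*}
\Delta_f(vS) = -2v|\Ric|^2 + 2\nabla v\cdot \nabla S.
\end{equation*}

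Next, since $v(p)>0$, I can write $\nabla S = v^{-1}\nabla(vS) - Sv^{-1}\nabla v$ in a neighborhood of $p$, so
\begin{equation*}
2\nabla v\cdot \nabla S = 2\nabla\ln v\cdot \nabla(vS) - 2|\nabla \ln v|^2 vS.
\end{equation*}
Substituting back and moving the $2\nabla\ln v\cdot\nabla(vS)$ term to the left gives exactly
\begin{equation*}
\Delta_{f+2\ln v}(vS) = \Delta_f(vS) - 2\nabla \ln v\cdot \nabla(vS) = -2v|\Ric|^2 - 2|\nabla \ln v|^2 vS,
\end{equation*}
which is the claimed identity.

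There is no real obstacle here; the only delicate point is that dividing by $v$ requires $v(p)>0$, which is precisely the hypothesis of the lemma (and is guaranteed under the running assumption $\lim_{r\to\infty}f=-\infty$ by Lemma \ref{v proper implies v>0}). All other steps are the product rule and the two soliton identities (\ref{eqn of S}) and (\ref{eqn of v}).
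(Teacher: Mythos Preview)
Your proof is correct and follows essentially the same route as the paper: expand $\Delta_f(vS)$ by the product rule, apply the identities $\Delta_f S=-S-2|\Ric|^2$ and $\Delta_f v=v$ to cancel the $vS$ terms, and then rewrite $2\langle\nabla v,\nabla S\rangle$ as $2\langle\nabla\ln v,\nabla(vS)\rangle-2|\nabla\ln v|^2 vS$ using $S=(vS)v^{-1}$. The only cosmetic difference is that the paper writes the last step as $2\langle\nabla v,\nabla(vS\,v^{-1})\rangle$ before expanding, whereas you state the identity for $\nabla S$ directly.
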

\begin{proof}
Using (\ref{eqn of S}) and (\ref{eqn of v}),
\begin{eqnarray*}
\D (vS)&=& v\D S+ S\D v +2\langle \na v,\na S\rangle\\
&=& v(-S-2|\Ric|^2)+Sv+2\langle \na v,\na (vS v^{-1})\rangle\\
&=&-2v|\Ric|^2+2\langle \na \ln v, \na (vS)\rangle-2|\na \ln v|^2 vS.
\end{eqnarray*}
This completes the proof of the lemma.
\end{proof}
\begin{lma}\label{compute lemma 2}Under the same assumption in Lemma \ref{compute lemma 1}, then at $p$
\begin{eqnarray}\notag
\Delta_{f+2\ln v}(e^{\frac{1}{\sqrt{v}}}v^{2-\frac{n}{2}}e^{-v}) &=&e^{\frac{1}{\sqrt{v}}}v^{2-\frac{n}{2}}e^{-v}\Big\{\frac{1}{2\sqrt{v}}-(S+\frac{n}{2})\frac{1}{v^{\frac{3}{2}}}+\frac{1}{v^{\frac{3}{2}}}(\frac{n}{2}-2)\\
\notag
&&-(S+\frac{n}{2})\frac{1}{v^{\frac{5}{2}}}(\frac{n}{2}-2)-S + (\frac{n}{2}-2)(\frac{n}{2}+1)\frac{1}{v}\\
\label{long computation}
&&-(S+\frac{n}{2})(\frac{n}{2}-2)(\frac{n}{2}+1)\frac{1}{v^2}-(2S+n)(\frac{n}{2}-1)\frac{1}{v}\\
\notag
&&+\frac{1}{v^{\frac{3}{2}}}\big[\frac{7}{4}+\frac{1}{4\sqrt{v}}\big]-\frac{1}{v^{\frac{5}{2}}}(S+\frac{n}{2})\big[\frac{7}{4}+\frac{1}{4\sqrt{v}}\big]\Big\}.
\end{eqnarray}
Consequently if $M$ is noncompact and $v\to \infty$ as $r\to \infty$, then near infinity
\be\label{short form of long computation}
\Delta_{f+2\ln v}(e^{\frac{1}{\sqrt{v}}}v^{2-\frac{n}{2}}e^{-v})=e^{\frac{1}{\sqrt{v}}}v^{2-\frac{n}{2}}e^{-v}\Big\{
\big(\frac{1}{2}+o(1)\big)\frac{1}{\sqrt{v}}+S\big(-1+o(1)\big)\Big\}.
\ee
\end{lma}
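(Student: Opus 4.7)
The plan is to apply the chain rule for the drift Laplacian: for any smooth one-variable function $\phi$ and smooth weight $\omega$,
\[
\Delta_\omega \phi(v) \;=\; \phi'(v)\,\Delta_\omega v + \phi''(v)\,|\na v|^2.
\]
With $\omega = f + 2\ln v$, this decouples the problem into two geometric ingredients and two calculus ingredients. From (\ref{eqn of v}) and (\ref{eqn of nav}),
\[
\Delta_{f+2\ln v} v \;=\; \Delta_f v - 2\,\frac{|\na v|^2}{v} \;=\; v - 2 + \frac{n+2S}{v}, \qquad |\na v|^2 \;=\; v - \tfrac{n}{2} - S.
\]

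Next, for $\phi(v) = e^{v^{-1/2}} v^{2-n/2} e^{-v}$, logarithmic differentiation gives
\[
\frac{\phi'}{\phi} \;=\; -\tfrac{1}{2}\, v^{-3/2} + (2 - \tfrac{n}{2})\, v^{-1} - 1,
\]
and $\phi''/\phi = (\phi'/\phi)^2 + \tfrac{3}{4}\, v^{-5/2} - (2 - \tfrac{n}{2})\, v^{-2}$. Multiplying $\phi'/\phi$ by $\Delta_{f+2\ln v}v$, multiplying $\phi''/\phi$ by $|\na v|^2$, summing, and grouping by powers of $v$ produces (\ref{long computation}). The $v^1$ and constant pieces cancel up to the single $-S$; the $v^{-1/2}$ pieces combine to $\tfrac{1}{2}\, v^{-1/2}$; and the remaining powers $v^{-1}, v^{-3/2}, v^{-2}, v^{-5/2}, v^{-3}$ match the factorizations displayed, with the $S$-dependence of these lower-order terms collected into factors of $(S + \tfrac{n}{2})$ or $(2S+n)$.

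For the asymptotic formula (\ref{short form of long computation}), split the right-hand side of (\ref{long computation}) into its $S$-independent and $S$-linear parts. Among the $S$-independent terms, only $\tfrac{1}{2\sqrt{v}}$ is of order $v^{-1/2}$; every other term is of order $v^{-1}$ or smaller, and is therefore $o(v^{-1/2})$ as $v \to \infty$. Among the $S$-linear terms, the stand-alone $-S$ dominates, while every other coefficient of $S$ decays at least like $v^{-1}$, producing $S \cdot o(1)$. Combining these yields $(\tfrac{1}{2} + o(1))/\sqrt{v} + S(-1 + o(1))$, which is (\ref{short form of long computation}).

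The main obstacle is purely bookkeeping. Once $\phi'/\phi$ and $\phi''/\phi$ are expanded in half-integer powers of $v$, multiplication by $\Delta_{f+2\ln v} v$ and $|\na v|^2$ generates roughly two dozen monomials, and these must be collated term-for-term against the specific groupings in (\ref{long computation}). The one point that requires care is the cancellation in the $v^{1}$ and constant pieces, which relies on writing $\alpha := 2 - n/2$ and checking that $-2\alpha - \tfrac{n}{2} + \alpha + 2 = 0$, so that the only residue at that order is the $-S$ term; everything else is routine algebra.
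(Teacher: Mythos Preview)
Your argument is correct and takes a genuinely different route from the paper. The paper factors the target function as a product of three pieces, $e^{1/\sqrt{v}}$, $v^{2-n/2}$, and $e^{-v}$, and repeatedly applies the product rule for $\Delta_f$ and then for $\Delta_{f+2\ln v}$: it first assembles $\Delta_{f+2\ln v}(v^{2-n/2}e^{-v})$ (recorded as (\ref{half eq for e-v})), separately computes $\Delta_{f+2\ln v}e^{1/\sqrt{v}}$, and then combines these with the cross term $2\langle \na e^{1/\sqrt{v}},\na(v^{2-n/2}e^{-v})\rangle$. You instead observe that the whole expression is a single function $\phi$ of the scalar $v$, so the chain rule $\Delta_\omega\phi(v)=\phi'(v)\Delta_\omega v+\phi''(v)|\na v|^2$ applies in one stroke; the geometry enters only through the two scalars $\Delta_{f+2\ln v}v=v-2+(n+2S)/v$ and $|\na v|^2=v-\tfrac{n}{2}-S$, and the rest is one-variable calculus via logarithmic differentiation. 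Your verification of the crucial cancellation at the $v^1$ and constant orders (leaving only $-S$) is correct. Your method is cleaner and more systematic for this particular identity; the paper's factored approach, however, yields the intermediate formula (\ref{half eq for e-v}) for $\Delta_{f+2\ln v}(v^{2-n/2}e^{-v})$ as a by-product, which is reused later in the proof of Proposition~\ref{prop for norm sq Ric bdd by S and exp term}.
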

\begin{proof}
\begin{eqnarray*}
\D v^{2-\frac{n}{2}}&=&(2-\frac{n}{2})v^{1-\frac{n}{2}}\D v + (\frac{n}{2}-2)(\frac{n}{2}-1)v^{-\frac{n}{2}}|\na v|^2\\
&=&(2-\frac{n}{2})v^{2-\frac{n}{2}}+ (\frac{n}{2}-2)(\frac{n}{2}-1)v^{2-\frac{n}{2}}|\na \ln v|^2.
\end{eqnarray*}
By (\ref{eqn of nav}),
\begin{eqnarray*}
\D e^{-v}&=& -e^{-v}\D v + e^{-v}|\na v|^2\\
&=& e^{-v}(|\na v|^2-v)\\
&=& -(S+\frac{n}{2})e^{-v}.
\end{eqnarray*}
\begin{eqnarray*}
2\langle \na (v^{2-\frac{n}{2}}), \na e^{-v}\rangle &=& 2(2-\frac{n}{2})v^{1-\frac{n}{2}}|\na v|^2(-e^{-v})\\
&=& 2(\frac{n}{2}-2)v^{2-\frac{n}{2}}e^{-v}-(2S+n)(\frac{n}{2}-2)v^{1-\frac{n}{2}}e^{-v}.
\end{eqnarray*}
\begin{eqnarray*}
\D (v^{2-\frac{n}{2}}e^{-v})&=& e^{-v}\D v^{2-\frac{n}{2}}+ v^{2-\frac{n}{2}}\D e^{-v}+ 2\langle \na (v^{2-\frac{n}{2}}), \na e^{-v}\rangle\\
&=& (2-\frac{n}{2})v^{2-\frac{n}{2}}e^{-v}+ (\frac{n}{2}-2)(\frac{n}{2}-1)v^{2-\frac{n}{2}}e^{-v}|\na \ln v|^2\\
& &-(S+\frac{n}{2})v^{2-\frac{n}{2}}e^{-v}+ 2(\frac{n}{2}-2)v^{2-\frac{n}{2}}e^{-v}\\
& &-(2S+n)(\frac{n}{2}-2)v^{1-\frac{n}{2}}e^{-v}\\
&=& v^{2-\frac{n}{2}}e^{-v}\Big[-2-S + (\frac{n}{2}-2)(\frac{n}{2}-1)|\na \ln v|^2\\
& &\qquad\qquad-(2S+n)(\frac{n}{2}-2)v^{-1}\Big].
\end{eqnarray*}
On the other hand, by (\ref{eqn of nav})
\begin{eqnarray*}
-2\langle \na \ln v, \na (v^{2-\frac{n}{2}}e^{-v})\rangle &=& 2(\frac{n}{2}-2)v^{1-\frac{n}{2}}e^{-v}\langle \frac{\na v}{v}, \na v\rangle\\
& &+ 2v^{2-\frac{n}{2}}e^{-v}\langle \frac{\na v}{v}, \na v\rangle\\
&=& 2(\frac{n}{2}-2)v^{2-\frac{n}{2}}e^{-v}|\na \ln v|^2\\
&&+ 2v^{2-\frac{n}{2}}e^{-v}\frac{(v-S-\frac{n}{2})}{v}\\
&=& v^{2-\frac{n}{2}}e^{-v}\Big[2-(2S+n)v^{-1}\\
&&\qquad\qquad+2(\frac{n}{2}-2)|\na \ln v|^2\Big]
\end{eqnarray*}
Hence
\begin{eqnarray}\notag
\Delta_{f+2\ln v} (v^{2-\frac{n}{2}}e^{-v})&=& v^{2-\frac{n}{2}}e^{-v}\Big[ -S + (\frac{n}{2}-2)(\frac{n}{2}+1)|\na \ln v|^2\\
\label{half eq for e-v}&&\qquad\qquad-(2S+n)(\frac{n}{2}-1)v^{-1}\Big].
\end{eqnarray}
\begin{eqnarray*}
\D e^{\frac{1}{\sqrt{v}}}&=&-\frac{1}{2}v^{-\frac{3}{2}}e^{\frac{1}{\sqrt{v}}}\D v+ e^{\frac{1}{\sqrt{v}}}\big[\frac{3}{4}v^{-\frac{5}{2}}+\frac{1}{4}v^{-3}\big]|\na v|^2\\
&=&-\frac{1}{2}v^{-\frac{1}{2}}e^{\frac{1}{\sqrt{v}}}+ e^{\frac{1}{\sqrt{v}}}\big[\frac{3}{4}v^{-\frac{5}{2}}+\frac{1}{4}v^{-3}\big](v-S-\frac{n}{2})\\
&=&-\frac{1}{2}v^{-\frac{1}{2}}e^{\frac{1}{\sqrt{v}}}+ v^{-\frac{3}{2}}e^{\frac{1}{\sqrt{v}}}\big[\frac{3}{4}+\frac{1}{4}v^{-\frac{1}{2}}\big]\\
&&-v^{-\frac{5}{2}}e^{\frac{1}{\sqrt{v}}}(S+\frac{n}{2})\big[\frac{3}{4}+\frac{1}{4}v^{-\frac{1}{2}}\big].
\end{eqnarray*}
\begin{eqnarray*}
\Delta_{f+2\ln v}e^{\frac{1}{\sqrt{v}}}&=& \D e^{\frac{1}{\sqrt{v}}} -2\langle \frac{\na v}{v}, \na e^{\frac{1}{\sqrt{v}}}\rangle\\
&=&\D e^{\frac{1}{\sqrt{v}}}+ \frac{(v-S-\frac{n}{2})}{v^{\frac{5}{2}}}e^{\frac{1}{\sqrt{v}}}\\
&=&-\frac{1}{2}v^{-\frac{1}{2}}e^{\frac{1}{\sqrt{v}}}+ v^{-\frac{3}{2}}e^{\frac{1}{\sqrt{v}}}\big[\frac{7}{4}+\frac{1}{4}v^{-\frac{1}{2}}\big]\\
&&-v^{-\frac{5}{2}}e^{\frac{1}{\sqrt{v}}}(S+\frac{n}{2})\big[\frac{7}{4}+\frac{1}{4}v^{-\frac{1}{2}}\big].
\end{eqnarray*}
\begin{eqnarray*}
2\langle \na e^{\frac{1}{\sqrt{v}}}, \na (v^{2-\frac{n}{2}}e^{-v})\rangle &=&-v^{-\frac{3}{2}}e^{\frac{1}{\sqrt{v}}}\langle \na v, \na v \rangle(2-\frac{n}{2})v^{1-\frac{n}{2}}e^{-v}\\
& &-v^{-\frac{3}{2}}e^{\frac{1}{\sqrt{v}}}\langle \na v, \na v \rangle v^{2-\frac{n}{2}}(-e^{-v})\\
&=&v^{-\frac{3}{2}}e^{\frac{1}{\sqrt{v}}}(v-S-\frac{n}{2})(\frac{n}{2}-2)v^{1-\frac{n}{2}}e^{-v}\\
& &+v^{-\frac{3}{2}}e^{\frac{1}{\sqrt{v}}}(v-S-\frac{n}{2}) v^{2-\frac{n}{2}}e^{-v}\\
&=&v^{-\frac{1}{2}}e^{\frac{1}{\sqrt{v}}}v^{2-\frac{n}{2}}e^{-v}-(S+\frac{n}{2})v^{-\frac{3}{2}}e^{\frac{1}{\sqrt{v}}}v^{2-\frac{n}{2}}e^{-v}\\
& &+v^{-\frac{3}{2}}e^{\frac{1}{\sqrt{v}}}(\frac{n}{2}-2)v^{2-\frac{n}{2}}e^{-v}\\
&&-(S+\frac{n}{2})v^{-\frac{5}{2}}e^{\frac{1}{\sqrt{v}}}(\frac{n}{2}-2)v^{2-\frac{n}{2}}e^{-v}.
\end{eqnarray*}
Using (\ref{half eq for e-v}), we have
\begin{eqnarray*}
\Delta_{f+2\ln v}(e^{\frac{1}{\sqrt{v}}}v^{2-\frac{n}{2}}e^{-v})&=&e^{\frac{1}{\sqrt{v}}}\Delta_{f+2\ln v}(v^{2-\frac{n}{2}}e^{-v})+v^{2-\frac{n}{2}}e^{-v}\Delta_{f+2\ln v}(e^{\frac{1}{\sqrt{v}}})\\
& &+ 2\langle \na e^{\frac{1}{\sqrt{v}}}, \na (v^{2-\frac{n}{2}}e^{-v})\rangle\\
&=&e^{\frac{1}{\sqrt{v}}}v^{2-\frac{n}{2}}e^{-v}\Big\{\frac{1}{2\sqrt{v}}-(S+\frac{n}{2})v^{-\frac{3}{2}}+v^{-\frac{3}{2}}(\frac{n}{2}-2)\\
&&-(S+\frac{n}{2})v^{-\frac{5}{2}}(\frac{n}{2}-2)-S + (\frac{n}{2}-2)(\frac{n}{2}+1)|\na \ln v|^2\\
&&-(2S+n)(\frac{n}{2}-1)v^{-1}+v^{-\frac{3}{2}}\big[\frac{7}{4}+\frac{1}{4}v^{-\frac{1}{2}}\big]\\
&&-v^{-\frac{5}{2}}(S+\frac{n}{2})\big[\frac{7}{4}+\frac{1}{4}v^{-\frac{1}{2}}\big]\Big\}\\
&=&e^{\frac{1}{\sqrt{v}}}v^{2-\frac{n}{2}}e^{-v}\Big\{\frac{1}{2\sqrt{v}}-(S+\frac{n}{2})v^{-\frac{3}{2}}+v^{-\frac{3}{2}}(\frac{n}{2}-2)\\
&&-(S+\frac{n}{2})v^{-\frac{5}{2}}(\frac{n}{2}-2)-S + (\frac{n}{2}-2)(\frac{n}{2}+1)v^{-1}\\
&&-(S+\frac{n}{2})(\frac{n}{2}-2)(\frac{n}{2}+1)v^{-2}-(2S+n)(\frac{n}{2}-1)v^{-1}\\
&&+v^{-\frac{3}{2}}\big[\frac{7}{4}+\frac{1}{4}v^{-\frac{1}{2}}\big]-v^{-\frac{5}{2}}(S+\frac{n}{2})\big[\frac{7}{4}+\frac{1}{4}v^{-\frac{1}{2}}\big]\Big\}.
\end{eqnarray*}
We showed (\ref{long computation}). We can then separate the terms with $S$ from those without $S$ in R.H.S of (\ref{long computation}) to get (\ref{short form of long computation}).
\end{proof}

\section{Proof of Theorem \ref{scalar lower bdd in expander}}
With all the computations in the previous section, we are going to prove Theorem \ref{scalar lower bdd in expander}:
\begin{thm*}Let $(M^n,g,f)$ be an $n$ dimensional complete non-compact gradient expanding Ricci soliton with $n\geq 2$. Suppose that $\lim_{r\to \infty} f=-\infty$ and $(M,g)$ is not flat and has non-negative scalar curvature $S$. Then there exists a positive constant $C$ such that
$$S\geq Cv^{1-\frac{n}{2}}e^{-v} \text{  on  } M,$$
where $v:=\frac{n}{2}-f$.
\end{thm*}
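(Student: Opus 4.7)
The plan is a maximum-principle comparison with the barrier function
\[
    \psi \,:=\, e^{1/\sqrt{v}}\,v^{2-\frac{n}{2}}\,e^{-v},
\]
which is exactly the function whose weighted Laplacian Lemma \ref{compute lemma 2} has already computed. I would aim to show $w := vS - \epsilon\psi \geq 0$ on $M$ for some small $\epsilon>0$; since $e^{1/\sqrt v} \geq 1$, this yields $S \geq \epsilon\,v^{1-\frac{n}{2}}e^{-v}$ as required. Three background observations are immediately available: $v>0$ on $M$ and $v\to\infty$ at infinity (from $\lim_{r\to\infty} f=-\infty$ together with Lemma \ref{v proper implies v>0}); the strong minimum principle applied to equation (\ref{eqn of S}) combined with $S\geq 0$ and non-flatness forces $S>0$ everywhere; and the sublevel set $K_{V_0} := \{v \leq V_0\}$ is compact for every $V_0>0$ by properness of $f$.

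First, using the asymptotic identity (\ref{short form of long computation}) I would pick $V_0$ so large that on $M \setminus K_{V_0}$ one has the concrete estimate
\[
    \Delta_{f+2\ln v}\psi \,\geq\, \psi\Bigl(\tfrac{1}{4\sqrt{v}} - \tfrac{5}{4}S\Bigr),
\]
which is strictly positive whenever $S < 1/(5\sqrt{v})$. Second, I would choose $\epsilon\in(0,1)$ small enough that $vS > \epsilon\psi$ on $K_{V_0}$ (possible because $vS$ has a positive minimum there while $\psi$ is bounded) and simultaneously $\epsilon \psi \leq \sqrt{v}/5$ on $M\setminus K_{V_0}$ (automatic once $V_0$ is sufficiently large, since $\psi$ decays exponentially in $v$ while $\sqrt v$ only grows).

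Third, suppose toward contradiction that $w < 0$ somewhere. Since $w \geq -\epsilon\psi$ and $\psi \to 0$ at infinity, $\liminf_{x\to\infty} w \geq 0$, and combined with $w > 0$ on $K_{V_0}$ this forces the infimum of $w$ to be attained at some interior point $x_0 \in M \setminus K_{V_0}$. At $x_0$, $\nabla w(x_0)=0$ and $\Delta w(x_0)\geq 0$, hence $\Delta_{f+2\ln v} w (x_0) \geq 0$, i.e.,
\[
    \Delta_{f+2\ln v}(vS)(x_0) \,\geq\, \epsilon\,\Delta_{f+2\ln v}\psi(x_0).
\]
The left-hand side is nonpositive by Lemma \ref{compute lemma 1}, whereas $vS(x_0) < \epsilon\psi(x_0)$ combined with the second condition of the previous step yields $S(x_0) < \epsilon\psi(x_0)/v(x_0) < 1/(5\sqrt{v(x_0)})$, so the first step makes the right-hand side strictly positive --- a contradiction.

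The main obstacle is the choice of barrier. A naive ansatz $v^{2-n/2}e^{-v}$ fails in low dimensions because the lower-order terms in (\ref{half eq for e-v}) can carry the wrong sign; the factor $e^{1/\sqrt v}$ in $\psi$ is inserted precisely to promote the leading term of $\Delta_{f+2\ln v}\psi$ to the strictly positive $\tfrac{1}{2\sqrt v}\,\psi$ appearing in (\ref{short form of long computation}). Once this barrier is in place the argument is a standard weighted maximum principle, whose only further subtlety is that on a noncompact manifold the infimum of $w$ need not a priori be attained --- which is handled by the exponential decay of $\psi$ at infinity, forcing $w$ to be nonnegative asymptotically.
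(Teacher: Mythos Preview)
Your proposal is correct and follows essentially the same route as the paper: the same barrier $\psi=e^{1/\sqrt v}v^{2-n/2}e^{-v}$, the same use of Lemmas \ref{compute lemma 1} and \ref{compute lemma 2}, and the same maximum-principle contradiction. The only cosmetic difference is that the paper handles noncompactness by exhausting with annuli $B_T\setminus\overline{B_{R_0}}$ and letting $\varepsilon\to 0$, whereas you argue directly that a negative infimum of $w$ must be attained because $\psi\to 0$ at infinity; both are standard and equivalent here.
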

\begin{proof}
Since the scalar curvature $S\geq 0$ and $v\geq \frac{n}{2}$, we have by (\ref{eqn for vS})
\be\label{ineq of vS}
\Delta_{f+2\ln v}(vS)\leq 0.
\ee
Moreover by $\lim_{x\to\infty} v(x)=\infty$ and (\ref{short form of long computation}), we see that outside a compact subset of $M$,
\begin{eqnarray*}
\Delta_{f+2\ln v}(e^{\frac{1}{\sqrt{v}}}v^{2-\frac{n}{2}}e^{-v})&=&e^{\frac{1}{\sqrt{v}}}v^{2-\frac{n}{2}}e^{-v}\Big\{
\big(\frac{1}{2}+o(1)\big)\frac{1}{\sqrt{v}}+S\big(-1+o(1)\big)\Big\}.
\end{eqnarray*}
From the above equation,
\be\label{ineq of e-v}
\Delta_{f+2\ln v}(e^{\frac{1}{\sqrt{v}}}v^{2-\frac{n}{2}}e^{-v})\geq e^{\frac{1}{\sqrt{v}}}v^{2-\frac{n}{2}}e^{-v}(\frac{1}{4\sqrt{v}}-2S).
\ee
Since $v=\frac{n}{2}-f \to \infty$ as $r\to\infty$, by taking a larger compact set if necessary, we may assume that
\be\label{contradict ineq}
\frac{1}{8}>e^{\frac{1}{\sqrt{v}}}v^{\frac{3}{2}-\frac{n}{2}}e^{-v}
\ee
near infinity. Let $R_0$ be a large positive number such that (\ref{ineq of vS}), (\ref{ineq of e-v}) and (\ref{contradict ineq}) hold on $M\setminus B_{R_0}(p_0)$. Hence by $S>0$ and (\ref{lower bdd of v}), there exists a constant $b\in (0,1)$ such that
\be\label{Q at bdry R}
vS>be^{\frac{1}{\sqrt{v}}}v^{2-\frac{n}{2}}e^{-v} \text{  on  } \partial B_{R_0}(p_0).
\ee
Let $Q:=vS-be^{\frac{1}{\sqrt{v}}}v^{2-\frac{n}{2}}e^{-v}$. It is not difficult to see that $$\liminf_{r\to \infty} Q\geq 0.$$
Fix any $y$ in $M\setminus \overline{B_{R_0}(p_0)}$ and any $\varepsilon>0$, there exists a large positive $T>R_0$ such that $y$ $\in$ $B_T(p_0)$ and
\be \label{Q at bdry T}
Q\geq -\varepsilon \text{  on  } \partial B_T(p_0).
\ee
Let $\Omega:= B_T(p_0)\setminus\overline{B_{R_0}(p_0)}$ and $z$ $\in$ $\overline{\Omega}$ such that $Q$ attains its minimum over $\overline{\Omega}$ at $z$, i.e.
$$Q(z)=\min_{\overline{\Omega}}Q.$$
If $z$ $\in$ $\partial \Omega$, then by (\ref{Q at bdry R}) and (\ref{Q at bdry T}), we have
$$Q(y)\geq Q(z)\geq -\varepsilon.$$
If $z$ $\in$ $\Omega$, we have by (\ref{ineq of vS}) and (\ref{ineq of e-v}) that at $z$
\begin{eqnarray*}
0&\leq& \Delta_{f+2\ln v} Q\\
&=&\Delta_{f+2\ln v} (vS) -b\Delta_{f+2\ln v} (e^{\frac{1}{\sqrt{v}}}v^{2-\frac{n}{2}}e^{-v})\\
&\leq& -b\Delta_{f+2\ln v} (e^{\frac{1}{\sqrt{v}}}v^{2-\frac{n}{2}}e^{-v})\\
&\leq& 2b e^{\frac{1}{\sqrt{v}}}v^{2-\frac{n}{2}}e^{-v}(S-\frac{1}{8\sqrt{v}}).
\end{eqnarray*}
Together with (\ref{contradict ineq}), we know that at $z$
\begin{eqnarray*}
vS&\geq& \frac{\sqrt{v}}{8}\\
&\geq& e^{\frac{1}{\sqrt{v}}}v^{2-\frac{n}{2}}e^{-v}\\
&\geq& b e^{\frac{1}{\sqrt{v}}}v^{2-\frac{n}{2}}e^{-v},
\end{eqnarray*}
i.e. $0\leq Q(z)\leq Q(y)$. Hence in any cases, $Q(y)\geq -\varepsilon$. Result then follows by letting $\varepsilon \to 0$ and choosing a smaller $b$ to make $Q\geq 0$ on the entire $M$.
\end{proof}
\section{Proof of Theorem \ref{scalar lower bdd in expander when Ric <0}}

To prepare for the maximum principle argument, we first show that the scalar curvature is negative in a gradient expander with nonpositive Ricci curvature.
\begin{lma}\label{Ric<0 implies S<0}Let $(M^n, g, f)$ be a complete noncompact and nonflat expanding gradient Ricci soliton with $\Ric\leq 0$. Then $S<0$ on $M$.
\end{lma}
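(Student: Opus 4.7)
The plan is to argue by contradiction using a strong maximum principle. First, $\Ric \leq 0$ gives $S = \tr\Ric \leq 0$ on $M$. I will assume, toward contradiction, that $S(p_0) = 0$ at some point $p_0$; then $p_0$ is an interior maximum of $S$, and since $\Ric(p_0)$ is negative semidefinite with trace zero, all its eigenvalues vanish, forcing $\Ric(p_0) = 0$.

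The main ingredient will be the pointwise inequality $|\Ric|^2 \leq S^2$, valid whenever $\Ric \leq 0$: if the eigenvalues $\lambda_i \leq 0$ of $\Ric$ are written with $|\lambda_i| = -\lambda_i$, then
\[
|\Ric|^2 = \sum_i \lambda_i^2 \;\leq\; \Big(\sum_i |\lambda_i|\Big)^{\!2} = S^2.
\]
Inserted into the scalar curvature equation (\ref{eqn of S}), this yields
\[
\Delta_f S + (1+2S)S \geq 0 \quad \text{on } M,
\]
which I view as $L S \geq 0$ with $L := \Delta_f + c(\cdot)$ and bounded zeroth-order coefficient $c(x) := 1 + 2S(x)$ (bounded because $S \in [-n/2, 0]$ by (\ref{general bdd for S in expander})).

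The next step is the Hopf strong maximum principle. Although $c$ has no definite sign, the fact that $S$ attains the extremum value $0$ at the interior point $p_0$ will let me use the standard subtraction trick: writing $c = c^+ - c^-$ with $c^\pm \geq 0$,
\[
(\Delta_f - c^-)S \;=\; L S - c^+ S \;\geq\; -c^+ S \;\geq\; 0,
\]
since $S \leq 0$ and $c^+ \geq 0$. The operator $\Delta_f - c^-$ has nonpositive zeroth-order term, so the classical Hopf strong maximum principle (cf.\ \cite{GilbargTrudinger-2001}) applied on the connected manifold $M$ forces $S \equiv 0$.

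Finally, $S \equiv 0$ combined with $\Ric \leq 0$ forces $\Ric \equiv 0$, and the soliton equation degenerates to $\nabla^2 f = -\tfrac{1}{2}g$ globally. This makes $\nabla f$ a complete non-Killing homothetic vector field on $(M,g)$, so by Tashiro's theorem (or the Kobayashi classification of complete homothetic vector fields) $(M,g)$ must be isometric to flat Euclidean space, contradicting the non-flatness hypothesis. The main technical subtlety of this plan is the application of the strong maximum principle in the presence of a zeroth-order coefficient of arbitrary sign; the hypothesis $\Ric \leq 0$ is what supplies the quadratic bound $|\Ric|^2 \leq S^2$, and the fact that the extremum value is exactly $0$ is what allows the subtraction trick to close the argument.
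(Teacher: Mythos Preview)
Your proof is correct and follows essentially the same route as the paper: both use $|\Ric|^2 \le S^2$ to turn (\ref{eqn of S}) into the differential inequality $\Delta_f S \ge -(1+2S)S$, apply the strong maximum principle at an interior zero of $S$ to force $S\equiv 0$ and hence $\Ric\equiv 0$, and then conclude flatness from $\nabla^2 f = -\tfrac12 g$. The only cosmetic differences are that you spell out the subtraction trick for the zeroth-order coefficient (the paper simply cites \cite{GilbargTrudinger-2001}) and invoke Tashiro's theorem where the paper cites \cite{PigolaRimoldiSetti-2011}.
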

\begin{proof}Using $\Ric\leq 0$, we have $|Ric|^2\leq S^2$ and by (\ref{eqn of S})
\begin{eqnarray*}
\D S&=&-S-2|\Ric|^2\\
&\geq&-(1+2S)S.
\end{eqnarray*}
We argue by contradiction. If $S(z)=0$ for some $z$ in $M$, then S attains its interior maximum at $z$. By the strong maximum principle \cite{GilbargTrudinger-2001}, $S\equiv 0$ and hence by (\ref{eqn of S}) $\Ric\equiv 0$. From (\ref{eq-RS-2}), $-2\na^2 f=g$. $g$ is flat by a result in \cite{PigolaRimoldiSetti-2011}, which is absurd.
\end{proof}
With Lemma \ref{Ric<0 implies S<0}, we can finish the proof of Theorem \ref{scalar lower bdd in expander when Ric <0}:
\begin{thm*}Let $(M^n,g,f)$ be an $n$ dimensional complete non-compact gradient expanding Ricci soliton with $n\geq 2$. Suppose that $\lim_{r\to \infty} f=-\infty$ and $(M,g)$ is not flat and has non-positive Ricci curvature. Then there is a positive constant $C$ such that
$$S\leq -Cv^{1-\frac{n}{2}}e^{-v},$$
where $v:=\frac{n}{2}-f$.
\end{thm*}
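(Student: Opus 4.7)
The plan is to mimic the weighted maximum principle argument of Theorem \ref{scalar lower bdd in expander}, but applied to $-vS$, which is strictly positive on $M$ by Lemma \ref{Ric<0 implies S<0}, against the same barrier $\Phi := e^{1/\sqrt{v}}v^{2-\frac{n}{2}}e^{-v}$ that appeared in Lemma \ref{compute lemma 2}. Setting $Q := -vS - b\Phi$ for a small $b>0$ to be chosen, it suffices to prove $Q \geq 0$ on $M$, since this yields $-S \geq b\,v^{1-\frac{n}{2}}e^{-v}$ after dropping the harmless factor $e^{1/\sqrt{v}} \geq 1$.

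Three sign ingredients drive the argument. First, because $\Ric \leq 0$ has only non-positive eigenvalues $\lambda_i$, one has $|\Ric|^2 = \sum \lambda_i^2 \leq (\sum |\lambda_i|)^2 = S^2$; plugging this into (\ref{eqn for vS}) together with $vS \leq 0$ gives
\[
\Delta_{f+2\ln v}(-vS) \;\leq\; 2v|S|\bigl(|S| - |\nabla\ln v|^2\bigr).
\]
Second, (\ref{eqn of nav}) together with $S \leq 0$ yields $|\nabla v|^2 = v - \tfrac{n}{2} + |S| \geq v - \tfrac{n}{2}$, so $|\nabla\ln v|^2 \geq \tfrac{1}{2v}$ as soon as $v \geq n$. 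Third, (\ref{short form of long computation}) with $S \leq 0$ yields
\[
\Delta_{f+2\ln v}\Phi \;\geq\; \frac{\Phi}{4\sqrt{v}}
\]
outside some geodesic ball $B_{R_0}(p_0)$; I enlarge $R_0$ further if needed to guarantee $b\Phi < \tfrac{1}{2}$ there.

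With these tools in hand the proof proceeds exactly as in Theorem \ref{scalar lower bdd in expander}. Since $-vS$ is continuous and strictly positive on the compact set $\partial B_{R_0}(p_0)$, $b$ can be chosen small enough that $Q > 0$ on $\partial B_{R_0}(p_0)$; since $-vS \geq 0$ and $\Phi \to 0$ at infinity, $\liminf_{r\to\infty} Q \geq 0$. Fix $y \in M\setminus \overline{B_{R_0}(p_0)}$ and $\varepsilon > 0$; choose $T$ large with $y \in B_T(p_0)$ and $Q \geq -\varepsilon$ on $\partial B_T(p_0)$, and look at the minimum of $Q$ on $\overline{\Omega} := \overline{B_T(p_0)\setminus \overline{B_{R_0}(p_0)}}$. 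If this minimum is attained at an interior point $z$ with $Q(z) < 0$, then $v(z)|S|(z) < b\Phi(z) < \tfrac{1}{2}$, so $|S|(z) < \tfrac{1}{2v(z)} \leq |\nabla\ln v|^2(z)$; the upper bound above forces $\Delta_{f+2\ln v}(-vS)(z) \leq 0$, whence
\[
\Delta_{f+2\ln v}Q(z) \;\leq\; -\,\frac{b\,\Phi(z)}{4\sqrt{v(z)}} \;<\; 0,
\]
contradicting $\Delta_{f+2\ln v}Q(z) \geq 0$ at an interior minimum. Thus $Q(y) \geq -\varepsilon$, and sending $\varepsilon \to 0$ followed by a possible shrinking of $b$ on the compact interior $\overline{B_{R_0}(p_0)}$ (where $-vS$ is bounded below away from $0$) delivers $Q \geq 0$ on $M$.

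The main obstacle is that, unlike in Theorem \ref{scalar lower bdd in expander}, the quantity $\Delta_{f+2\ln v}(-vS)$ is not a priori of a definite sign: the terms $2v|\Ric|^2 \geq 0$ and $2|\nabla\ln v|^2\,vS \leq 0$ pull in opposite directions. The resolution is the self-bootstrap above: a hypothetical interior minimum of $Q$ below zero would by definition force $v|S| < b\Phi$, placing $|S|$ below $|\nabla\ln v|^2 \sim 1/v$, and exactly in this regime the inequality $|\Ric|^2 \leq S^2$ coming from $\Ric \leq 0$ lets the negative piece dominate and close the argument.
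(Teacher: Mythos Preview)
Your proof is correct and essentially follows the paper's approach: the same barrier $e^{1/\sqrt{v}}v^{2-n/2}e^{-v}$, the same weighted operator $\Delta_{f+2\ln v}$, and the same annular extremum principle. The only difference is cosmetic --- the paper, working with $Q = vS + \alpha\Phi$ and seeking a maximum, rearranges the inequality $0 \geq S\bigl(-2 + (n+2S)/v - 2vS\bigr)$ algebraically at the interior extremum to obtain $vS \leq -\tfrac12 \leq -\alpha\Phi$ directly, whereas you run a self-bootstrap (the hypothesis $Q(z)<0$ forces $|S| < 1/(2v) \leq |\nabla\ln v|^2$, which then yields $\Delta_{f+2\ln v}(-vS)\leq 0$). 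One minor housekeeping point: fix $R_0$ so that $v \geq n$ and $\Phi < \tfrac12$ already hold on $M\setminus B_{R_0}$ \emph{before} selecting $b\in(0,1)$; then $b\Phi < \tfrac12$ is automatic and the circularity in your ordering of $R_0$ and $b$ disappears.
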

\begin{proof}
Using the discussion after the proof of Lemma \ref{v proper implies v>0}, we have $v>0$ on $M$. By (\ref{eqn of nav}),
\begin{eqnarray*}
|\na \ln v|^2v&=&\frac{|\na v|^2}{v}\\
&=& 1-\frac{\frac{n}{2}+S}{v}.
\end{eqnarray*}
Using $\Ric\leq 0$ and (\ref{eqn for vS}), we see that $|\Ric|^2\leq S^2$ and
 \begin{eqnarray}\notag
\Delta_{f+2\ln v}(vS)&\geq&-2vS^2-2|\na \ln v|^2vS\\
\label{ineq for vS with Ric<0} &=&-2vS^2-2S+\frac{nS+2S^2}{v}\\
\notag&=&S\Big(-2+\frac{n+2S}{v}-2vS\Big).
 \end{eqnarray}
From $\lim_{r\to\infty} v=\infty$ and (\ref{short form of long computation}), we see that
\begin{eqnarray}\notag
\Delta_{f+2\ln v}(e^{\frac{1}{\sqrt{v}}}v^{2-\frac{n}{2}}e^{-v})&=&e^{\frac{1}{\sqrt{v}}}v^{2-\frac{n}{2}}e^{-v}\Big\{
\big(\frac{1}{2}+o(1)\big)\frac{1}{\sqrt{v}}+S\big(-1+o(1)\big)\Big\}\\
\label{ineq of e-v with Ric<0} &\geq& 0
\end{eqnarray}
near infinity. We now consider a large $R_0$ such that on $M\setminus B_{R_0}(p_0)$, $v>1$ and (\ref{ineq of e-v with Ric<0}) are true, moreover the following hold:
\be\label{contradict ineq 1 with Ric<0}
\frac{-1+\frac{n}{2v}}{1-\frac{1}{v^2}}\leq -\frac{1}{2}
\ee
and
\be\label{contradict ineq 2 with Ric<0}
\frac{1}{2}\geq e^{\frac{1}{\sqrt{v}}}v^{2-\frac{n}{2}}e^{-v}.
\ee
For such $R_0$, there exists a positive constant $\a$ $\in (0,1)$ such that
\be\label{bdry cond with Ric<0}
vS+\a e^{\frac{1}{\sqrt{v}}}v^{2-\frac{n}{2}}e^{-v}<0 \text{  on  } \partial B_{R_0}(p_0).
\ee
We define $Q:=vS+\a e^{\frac{1}{\sqrt{v}}}v^{2-\frac{n}{2}}e^{-v}$. It is easy to see that
$$\limsup_{r\to\infty}Q\leq 0.$$
For any $y$ in $M\setminus \overline{B_{R_0}(p_0)}$ and any $\varepsilon>0$, there is a positive $T>R_0$ such that $y$ $\in$ $B_T(p_0)$ and
\be \label{Q at bdry T with Ric<0}
Q\leq \varepsilon \text{  on  } \partial B_T(p_0).
\ee
Let $\Omega:= B_T(p_0)\setminus\overline{B_{R_0}(p_0)}$ and $z$ $\in$ $\overline{\Omega}$ be a point where $Q$ attains its maximum over $\overline{\Omega}$.
If $z$ $\in$ $\partial \Omega$, then by the boundary conditions (\ref{bdry cond with Ric<0}) and (\ref{Q at bdry T with Ric<0}),
$$Q(y)\leq Q(z)\leq \varepsilon.$$
If $z$ $\in$ $\Omega$, we have by (\ref{ineq for vS with Ric<0}) and (\ref{ineq of e-v with Ric<0}) that at $z$
\begin{eqnarray*}
0&\geq& \Delta_{f+2\ln v} Q\\
&=&\Delta_{f+2\ln v} (vS) +\a \Delta_{f+2\ln v} (e^{\frac{1}{\sqrt{v}}}v^{2-\frac{n}{2}}e^{-v})\\
&\geq& \Delta_{f+2\ln v} (vS)\\
&\geq& S\Big(-2+\frac{n+2S}{v}-2vS\Big).
\end{eqnarray*}
Together with $S<0$, (\ref{contradict ineq 1 with Ric<0}) and (\ref{contradict ineq 2 with Ric<0}), at $z$
\begin{eqnarray*}
vS&\leq& \frac{-1+\frac{n}{2v}}{1-\frac{1}{v^2}}\\
&\leq&-\frac{1}{2}\\
&\leq&-e^{\frac{1}{\sqrt{v}}}v^{2-\frac{n}{2}}e^{-v}\\
&\leq&-\a e^{\frac{1}{\sqrt{v}}}v^{2-\frac{n}{2}}e^{-v}.
\end{eqnarray*}
Hence $Q(z)\leq 0$ and result follows by letting $\varepsilon\to 0$.
\end{proof}
\section{Proof of Theorem \ref{sufficient condition for S geq 0}}
Before moving to the proof of Theorem \ref{sufficient condition for S geq 0}, we recall the statement of the theorem:
\begin{thm*}
Let $(M^n,g,f)$ be a complete noncompact gradient expanding Ricci soliton with dimension $n\geq 3$. We denote the negative part of the scalar curvature $S$ by $S_-$, i.e. $S_-:=(-S)_+:=\max{\{-S, 0\}}$. If $S_-$ is integrable, that is,
\be\label{negative of S is integrable}
\int_M S_- dv_g <\infty,
\ee
then either $S\geq 0$ everywhere or M is isometric to an Einstein manifold with scalar curvature $-\frac{n}{2}$ and finite volume.
\end{thm*}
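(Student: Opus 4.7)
The plan is to work with $h := S_- = \max\{-S,0\}$ and combine a distributional differential inequality with an integration-by-parts identity and the a priori bound $h \leq n/2$ (immediate from (\ref{general bdd for S in expander})) to derive a rigid algebraic constraint. Applying Kato's inequality to the scalar curvature equation (\ref{eqn of S}) shows that $h$ is Lipschitz on $M$ and satisfies, distributionally,
$$\D h \,\geq\, -h + 2\chi_{\{S<0\}}|\Ric|^2 \,\geq\, -h + \tfrac{2}{n}h^2,$$
the last step using $|\Ric|^2 \geq S^2/n$ and the pointwise identity $S^2 = h^2$ on $\{S<0\}$.

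The next step is an integration-by-parts identity. For any nonnegative smooth cutoff $\phi$ of compact support, multiplying the above by $\phi$, integrating against $dv_g$, and integrating by parts---using the trace soliton equation $\Delta f = -n/2 - S$ from (\ref{trace RS eqn}) and the pointwise algebraic identity $hS = -h^2$---should yield
$$\int_M h\bigl(\Delta\phi + \na\phi\cdot\na f\bigr)\,dv_g \,\geq\, \bigl(\tfrac{n}{2}-1\bigr)\!\int_M \phi h\,dv_g \,-\, \bigl(1-\tfrac{2}{n}\bigr)\!\int_M \phi h^2\,dv_g.$$
The natural choice is $\phi = \phi_R := \chi(v/R)$ where $v = n/2 - f$ and $\chi$ is a standard smooth cutoff that is $1$ on $[0,1]$ and $0$ past $2$. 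Using $\D v = v$ from (\ref{eqn of v}) and the gradient bound $|\na v|^2 \leq v$ (which follows from (\ref{eqn of nav}) and (\ref{general bdd for S in expander})), a short computation shows that $|\Delta\phi_R + \na\phi_R\cdot\na f|$ is uniformly bounded with support contained in $\{R \leq v \leq 2R\}$. Since $\int_M h\,dv_g < \infty$ by hypothesis, the left-hand side of the identity vanishes as $R\to\infty$, at least in the regime where $v$ is proper. In the remaining case where $v$ is bounded---which by Lemma \ref{v proper implies v>0} only arises outside the Einstein conclusion---a parallel cutoff using the distance function together with the linear bound $|\na f| \leq \sqrt{v}$ from (\ref{naf nav sqrtv control linearly in exp}) handles the boundary term.

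Passing to the limit yields $\bigl(1-\tfrac{2}{n}\bigr)\int_M h^2\,dv_g \geq \bigl(\tfrac{n}{2}-1\bigr)\int_M h\,dv_g$, which for $n\geq 3$ simplifies (dividing by the positive constant $(n-2)/n$) to $\int_M h^2\,dv_g \geq \tfrac{n}{2}\int_M h\,dv_g$. On the other hand, $h \leq n/2$ gives the reverse integral inequality, so equality must hold pointwise: $h^2 = (n/2)h$ almost everywhere. By continuity of $h$ and connectedness of $M$, either $h\equiv 0$ (so $S\geq 0$ on $M$) or $h\equiv n/2$. In the latter case $S\equiv -n/2$, and the rigidity statement of (\ref{general bdd for S in expander}) forces $M$ to be Einstein with $\Ric\equiv -\tfrac{1}{2}g$; the hypothesis $\int_M h\,dv_g < \infty$ then gives $\operatorname{Vol}(M) < \infty$.

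The main obstacle is justifying the cutoff step: no a priori Ricci lower bound is available on a general expander, so standard Laplacian comparison for the Riemannian distance function is unavailable. It is essential to use the potential-based cutoff $\chi(v/R)$ and exploit the soliton identity $\D v = v$; the most delicate point is treating the degenerate case in which the potential $f$ fails to tend to $-\infty$, so that the $v$-level-set cutoff is no longer compactly supported.
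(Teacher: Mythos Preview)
Your overall strategy is sound and, in the regime where $v = n/2 - f$ is proper, gives a cleaner argument than the paper's. The identity you derive---that multiplying $\D h \geq -h + \frac{2}{n}h^2$ by a cutoff $\phi$ and using $h\Delta f = -\frac{n}{2}h + h^2$ yields
\[
\int_M h\bigl(\Delta\phi + \na\phi\cdot\na f\bigr) \,\geq\, \bigl(\tfrac{n}{2}-1\bigr)\!\int_M\phi h \,-\, \bigl(1-\tfrac{2}{n}\bigr)\!\int_M\phi h^2
\]
---is correct, and for the potential-based cutoff $\phi_R = \chi(v/R)$ the left side is indeed bounded by $C\int_{\{R\leq v\leq 2R\}}h \to 0$, since $\Delta v = \frac{n}{2} + S$ is explicit and $|\na v|^2\leq v$. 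The conclusion $\int h^2 = \frac{n}{2}\int h$, combined with $h\leq \frac{n}{2}$, then forces $h\in\{0,\frac{n}{2}\}$ pointwise. (A minor remark: the distributional inequality for $h=(-S)_+$ is not Kato's inequality; it is the standard fact that $\Delta u_+ \geq \chi_{\{u>0\}}\Delta u$ for smooth $u$.)

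The gap is in the non-proper case. Lemma~\ref{v proper implies v>0} only asserts $v>0$ when $M$ is not Einstein; it says nothing about properness, so the parenthetical reference is misplaced. More seriously, the fallback to a distance cutoff $\psi(r/R)$ does not work as stated: the boundary term $\int h(\Delta\phi+\na\phi\cdot\na f)$ then involves $\Delta r$, for which no two-sided bound is available---the Wei--Wylie comparison \eqref{lap comp in exp} controls $\D r$ only from above, and there is no Ricci lower bound here. Rewriting via $\int h\Delta\phi = -\int\na h\cdot\na\phi$ trades this for control of $|\na S|$ on $\{S<0\}$, which is equally unavailable. The paper sidesteps the issue entirely by multiplying instead by $(-S)_+(S^2+\varepsilon)^{\delta-\frac{1}{2}}\phi^2$ with $\delta\in(0,\frac{1}{4})$: the extra power of $|S|$ produces, after integration by parts, a favorable $+\,\delta\!\int|\na S|^2(\cdots)$ term that absorbs the cross term $\phi\na\phi\cdot\na S$ by Cauchy--Schwarz. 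As a result the paper's boundary terms involve only $|\na\phi|^2$ and $\phi|\na\phi||\na f|$, both uniformly bounded for the distance cutoff using $|\na f|\leq C(r+1)$; no Laplacian comparison or properness of $v$ is needed. Your route could likely be completed by first showing that $v$ bounded forces the Einstein conclusion (for instance via an Omori--Yau principle for $\D$ applied to the equation $\D v = v$), but as written that step is missing.
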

\begin{proof} W.L.O.G., we may assume $S<0$ somewhere and show that $M$ is Einstein. We are going to prove that $M$ has constant scalar curvature $-\frac{n}{2}$.
By (\ref{eqn of S}) and Cauchy Schwarz inequality,
\begin{eqnarray*}
\D S&=&-S-2|\Ric|^2\\
&\leq&-S-\frac{2}{n}S^2.
\end{eqnarray*}
Let $\delta \in(0,\frac{1}{4})$, $\varepsilon\in (0,1)$ and $\phi$ be any nonnegative compactly supported function, we multiply the above inequality by $(-S)_+(S^2+\varepsilon)^{\delta-\frac{1}{2}}\phi^2\geq 0$ and integrate the inequality over $M$,
\begin{eqnarray*}
(I)+(II)&:=&\int_M (-S)_+(S^2+\varepsilon)^{\delta-\frac{1}{2}}\Delta S\phi^2 -\int_M (-S)_+(S^2+\varepsilon)^{\delta-\frac{1}{2}}\langle\na f, \na S\rangle\phi^2\\
&\leq& \int_M (-S)_+(-S-\frac{2}{n}S^2)(S^2+\varepsilon)^{\delta-\frac{1}{2}}\phi^2\\
&=&\int_{\{S<0\}} (-S)(-S-\frac{2}{n}S^2)(S^2+\varepsilon)^{\delta-\frac{1}{2}}\phi^2\\
&=&\int_{\{S<0\}}\frac{2}{n}S^2(S+\frac{n}{2})(S^2+\varepsilon)^{\delta-\frac{1}{2}}\phi^2.
\end{eqnarray*}
By \cite{GilbargTrudinger-2001}, the weak derivative of $(-S)_+$ is given by
\begin{eqnarray*}
\na (-S)_+&=&\chi_{\{S<0\}}\na (-S)\\
&=&-\chi_{\{S<0\}}\na S,
\end{eqnarray*}
where $\chi_{\{S<0\}}$ is the characteristic function of the set $\{S<0\}$ which is $1$ on $\{S<0\}$ and vanishes elsewhere. Using integration by part, we see that
\begin{eqnarray*}
(I)&=&\int_M (-S)_+(S^2+\varepsilon)^{\delta-\frac{1}{2}}\Delta S\phi^2\\
&=&-\int_M \chi_{\{S<0\}}\la -\na S, \na S\ra(S^2+\varepsilon)^{\delta-\frac{1}{2}}\phi^2-\int_M \chi_{\{S<0\}}(-S)\phi^2\langle \na (S^2+\varepsilon)^{\delta-\frac{1}{2}}, \na S\rangle\\
&&-\int_M \chi_{\{S<0\}}(-S)2\phi\langle \na \phi,\na S\rangle(S^2+\varepsilon)^{\delta-\frac{1}{2}}\\
&\geq&\int_{\{S<0\}} |\na S|^2(S^2+\varepsilon)^{\delta-\frac{1}{2}}\phi^2+(2\delta-1)\int_{\{S<0\}}|\na S|^2(S^2+\varepsilon)^{\delta-\frac{3}{2}}S^2\phi^2\\
&&-\int_{\{S<0\}}  2\phi|\na \phi||\na S||S|(S^2+\varepsilon)^{\delta-\frac{1}{2}}\\
&\geq& 2\delta\int_{\{S<0\}} |\na S|^2(S^2+\varepsilon)^{\delta-\frac{1}{2}}\phi^2-\delta\int_{\{S<0\}} |\na S|^2(S^2+\varepsilon)^{\delta-\frac{1}{2}}\phi^2\\
&&-\frac{1}{\delta}\int_{\{S<0\}} S^2(S^2+\varepsilon)^{\delta-\frac{1}{2}}|\na \phi|^2\\
&=&\delta\int_{\{S<0\}} |\na S|^2(S^2+\varepsilon)^{\delta-\frac{1}{2}}\phi^2-\frac{1}{\delta}\int_{\{S<0\}} S^2(S^2+\varepsilon)^{\delta-\frac{1}{2}}|\na \phi|^2.
\end{eqnarray*}

By (\ref{trace RS eqn}) and integration by part again, we have
\begin{eqnarray*}
(II)&=&-\int_M (-S)_+(S^2+\varepsilon)^{\delta-\frac{1}{2}}\langle\na f, \na S\rangle\phi^2\\
&=&\frac{1}{2\delta+1}\int_M \langle\na f,\na ((-S)_+^2+\varepsilon)^{\delta+\frac{1}{2}}\rangle\phi^2\\
&=&-\frac{1}{2\delta+1}\int_M \Delta f((-S)_+^2+\varepsilon)^{\delta+\frac{1}{2}}\phi^2-\frac{2}{2\delta+1}\int_M \phi\langle \na \phi,\na f\rangle((-S)_+^2+\varepsilon)^{\delta+\frac{1}{2}}\\
&=&\frac{1}{2\delta+1}\int_M (S+\frac{n}{2})((-S)_+^2+\varepsilon)^{\delta+\frac{1}{2}}\phi^2-\frac{2}{2\delta+1}\int_M \phi\langle \na \phi,\na f\rangle((-S)_+^2+\varepsilon)^{\delta+\frac{1}{2}}.
\end{eqnarray*}
All in all, we have
\begin{eqnarray*}
0&\leq& \delta\int_{\{S<0\}} |\na S|^2(S^2+\varepsilon)^{\delta-\frac{1}{2}}\phi^2\\
&\leq& \frac{1}{\delta}\int_{\{S<0\}}S^2(S^2+\varepsilon)^{\delta-\frac{1}{2}}|\na \phi|^2+\frac{2}{2\delta+1}\int_M \phi\langle \na \phi,\na f\rangle ((-S)_+^2+\varepsilon)^{\delta+\frac{1}{2}}\\
&&+\int_{\{S<0\}}\frac{2}{n}S^2(S+\frac{n}{2})(S^2+\varepsilon)^{\delta-\frac{1}{2}}\phi^2-\frac{1}{2\delta+1}\int_M (S+\frac{n}{2})((-S)_+^2+\varepsilon)^{\delta+\frac{1}{2}}\phi^2.
\end{eqnarray*}
Using $(-S)_+=-\chi_{\{S<0\}}S$, $\phi$ is compactly supported and Dominated Convergence theorem, we may let $\varepsilon \to 0$ and get
\begin{eqnarray*}
0 &\leq& \frac{1}{\delta}\int_{\{S<0\}}(S^2)^{\delta+\frac{1}{2}}|\na \phi|^2+\frac{2}{2\delta+1}\int_{\{S<0\}} \phi\langle \na \phi,\na f\rangle (S^2)^{\delta+\frac{1}{2}}\\
&&+\int_{\{S<0\}}\frac{2}{n}(S+\frac{n}{2})(S^2)^{\delta+\frac{1}{2}}\phi^2-\frac{1}{2\delta+1}\int_{\{S<0\}} (S+\frac{n}{2})(S^2)^{\delta+\frac{1}{2}}\phi^2\\
&=&\frac{1}{\delta}\int_{\{S<0\}}(S^2)^{\delta+\frac{1}{2}}|\na \phi|^2+\frac{2}{2\delta+1}\int_{\{S<0\}} \phi\langle \na \phi,\na f\rangle (S^2)^{\delta+\frac{1}{2}}\\
&&+\big(\frac{2}{n}-\frac{1}{2\delta+1}\big)\int_{\{S<0\}} (S+\frac{n}{2})(S^2)^{\delta+\frac{1}{2}}\phi^2.
\end{eqnarray*}
Since $n\geq 3$ and $\delta<\frac{1}{4}$, we have $\frac{2}{n}-\frac{1}{2\delta+1}< 0$ and by (\ref{general bdd for S in expander})
\begin{eqnarray}\notag 0&\leq&
\big(\frac{1}{2\delta+1}-\frac{2}{n}\big)\int_{\{S<0\}} (S+\frac{n}{2})(S^2)^{\delta+\frac{1}{2}}\phi^2\\
\label{negative part of S integral is zero}&\leq&\frac{1}{\delta}\int_{\{S<0\}}(S^2)^{\delta+\frac{1}{2}}|\na \phi|^2+\frac{2}{2\delta+1}\int_{\{S<0\}} \phi\langle \na \phi,\na f\rangle (S^2)^{\delta+\frac{1}{2}}.
\end{eqnarray}

Let $R\geq 1$ and $\psi :[0,\infty)\rightarrow \R$ be a smooth real valued function satisfying the following:
$0\leq\psi\leq 1$, $\psi'\leq 0$,
\[ \psi(t)=\begin{cases}
      1 & 0\leq t\leq 1 \\
      0 & 2\leq t\\
   \end{cases}
\]
and
$$|\psi'(t)|\leq c \text{  for all  } t\geq 0.$$
We take the cut off function $\phi(x):=\psi(\frac{r(x)}{R})$, then
\[ \phi=\begin{cases}
      1 & \text{  on  } B_R(p_0)\\
      0 & \text{  on  } M\setminus B_{2R}(p_0)\\
   \end{cases}
\]
and
\be\label{gradient of cut off in exp 3 dim}
|\na \phi|=\frac{|\psi'|}{R}\leq \frac{c}{R}\chi_{B_{2R}\setminus B_R},
\ee
\noindent where $\chi_{B_{2R}\setminus B_R}$ is the characteristic function of the set $B_{2R}(p_0)\setminus B_R(p_0)$ which is $1$ on $B_{2R}(p_0)\setminus B_R(p_0)$ and vanishes elsewhere. We have
\begin{eqnarray*}
\frac{1}{\delta}\int_{\{S<0\}}(S^2)^{\delta+\frac{1}{2}}|\na \phi|^2&\leq& \frac{c^2n^{2\delta}}{\delta4^{\delta}R^2}\int_{\{S<0\}\setminus B_R}|S|\\
&=&\frac{c^2n^{2\delta}}{\delta4^{\delta}R^2}\int_{M\setminus B_R} (-S)_+.
\end{eqnarray*}
Since $|\na f|\leq C(r+1)$ (see (\ref{naf nav sqrtv control linearly in exp})),
\begin{eqnarray*}
\frac{2}{2\delta+1}\int_{\{S<0\}} \phi\langle \na \phi,\na f\rangle (S^2)^{\delta+\frac{1}{2}}&\leq& \frac{2c n^{2\delta}}{(2\delta+1)4^{\delta}}\int_{\{S<0\}\setminus B_R}|S|\\
&=&\frac{2c n^{2\delta}}{(2\delta+1)4^{\delta}}\int_{M\setminus B_R}(-S)_+.
\end{eqnarray*}

By (\ref{negative of S is integrable}), we let $R\to\infty$ and conclude by (\ref{negative part of S integral is zero}) that
$$\int_{\{S<0\}} (S+\frac{n}{2})(S^2)^{\delta+\frac{1}{2}}=0.$$
Hence we may apply (\ref{general bdd for S in expander}) again to get
$$(S+\frac{n}{2})(S^2)^{\delta+\frac{1}{2}}\equiv 0 \text{  on  } \{S<0\}.$$
From this we see that $S=-\frac{n}{2}$ wherever $\{S<0\}$. Since we assume that $\{S<0\}$ is nonempty, we know by connectedness that $S\equiv-\frac{n}{2}$. $M$ is Einstein since
\begin{eqnarray*}
0&=&\D S\\
&=&-S-2|\Ric|^2\\
&=&-\frac{2S}{n}\big(S+\frac{n}{2}\big)-2|\Ric-\frac{S}{n}g|^2.
\end{eqnarray*}
Finiteness of volume is a now consequence of (\ref{negative of S is integrable}).
\end{proof}

\section{Proof of the Theorems \ref{curv estimate in dim 3 exp} and \ref{gap thm for 3 dim exp}}
In this section, we study the geometry of three dimensional gradient expander. For the convenience of reader, we recall the statement of Theorem \ref{curv estimate in dim 3 exp}\ref{3 dim S control Rm exp}.
\begin{thm*}Let $(M,g,f)$ be a $3$ dimensional complete non-compact gradient expanding Ricci soliton with bounded non-negative scalar curvature. Then the curvature tensor $\Rm$ is bounded, moreover
$$|\Rm|\leq c\sqrt{S} \text{  on  } M,$$
for some constant $c>0$.
\end{thm*}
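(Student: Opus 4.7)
The plan is to exploit the algebraic rigidity of the curvature tensor in dimension three. Since the Weyl tensor vanishes identically when $n=3$, the full Riemann tensor is a universal algebraic expression in $\Ric$, $S$ and $g$, and in particular one has the pointwise identity $|\Rm|^2 = 4|\Ric|^2 - S^2$. Because $S$ is assumed bounded, this reduces the desired bound $|\Rm|\le c\sqrt{S}$ to the pointwise estimate
\begin{equation*}
|\Ric|^2 \;\le\; C\,S
\end{equation*}
for some constant $C$ depending only on $\sup_M S$. Before attacking this, I would dispose of the trivial case: since $S\ge 0$, equation (\ref{eqn of S}) gives $\D S \le -S \le 0$, so the strong minimum principle forces either $S\equiv 0$ (in which case (\ref{eqn of S}) also yields $\Ric\equiv 0$ and the statement is trivial) or $S>0$ everywhere. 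I henceforth assume $S>0$, and set $u := |\Ric|^2/S$, a smooth nonnegative function whose boundedness is the real goal.

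The heart of the argument is an elliptic equation for $u$. From (\ref{eqn of Rc in exp}) one computes
\begin{equation*}
\D|\Ric|^2 \;=\; 2|\na\Ric|^2 - 2|\Ric|^2 - 4R_{ij}R_{iklj}R_{kl}.
\end{equation*}
In three dimensions the vanishing of the Weyl tensor expresses $R_{iklj}$ purely in terms of $\Ric$, $S$ and $g$; contracting and applying Newton's identity to rewrite $\mbox{tr}(\Ric^3)$ in terms of $S$, $|\Ric|^2$ and $\det\Ric$ turns the cubic term into a polynomial in these invariants. Combining the resulting formula with $\D S = -S - 2|\Ric|^2$ via the quotient rule, I arrive at an equation of the schematic form
\begin{equation*}
\Delta_{f-2\ln S}\, u \;=\; \frac{2|\na\Ric|^2}{S} \;+\; 2u^2 \;+\; (\text{lower order in } u) \;+\; \frac{C\,\det\Ric}{S},
\end{equation*}
whose crucial favourable feature is that the leading quadratic term $2u^2$ enters with the correct sign.

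To close the estimate I would bound the last term using AM--GM on the eigenvalues of $\Ric$: $|\det\Ric|\le |\Ric|^3/(3\sqrt{3}) = u^{3/2}S^{3/2}/(3\sqrt{3})$, so this term is dominated by $C\,u^{3/2}\sqrt{\sup_M S}$. Thus at an interior maximum of $u$, the inequality $\Delta_{f-2\ln S} u \le 0$ forces a polynomial inequality of the form $2u^2 \le C_1 u + C_2 u^{3/2} + C_3$, whose right-hand side grows strictly slower than the left as $u\to\infty$, yielding an a priori bound on $u$ in terms of $\sup_M S$ alone. Because $M$ is noncompact I would make this rigorous by applying the maximum principle to $Q := u\,\eta^2$ with $\eta(x) = \psi(r(x)/R)$ a standard radial cutoff, using the drift-Laplacian comparison that follows from $\D v = v$ (see (\ref{eqn of v})) together with the linear gradient bound (\ref{naf nav sqrtv control linearly in exp}) to control $\D\eta$, and then letting $R\to\infty$. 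Once $|\Ric|^2 \le C S$ is established, the conclusion $|\Rm|\le c\sqrt{S}$ follows at once from $|\Rm|^2 = 4|\Ric|^2 - S^2$.

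The main technical obstacle is this cutoff step: since $S$ is not assumed bounded below away from zero, the drift $\na\ln S$ sitting inside $\Delta_{f-2\ln S}$ is a priori unbounded, so the error terms produced when the maximum principle is applied to $Q$ rather than $u$ cannot be controlled by elementary means. I would handle this either by working with the regularised quotient $u_\varepsilon := |\Ric|^2/(S+\varepsilon)$ and passing to the limit $\varepsilon\to 0$, or by absorbing the bad terms into the good gradient quantity $2|\na\Ric|^2/S$ already present on the right-hand side via a Cauchy--Schwarz argument.
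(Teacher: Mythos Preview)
Your approach is essentially the paper's, but you have introduced an unnecessary detour that creates the very obstacle you flag at the end. The paper works directly with $\D u$ for $u=S^{-1}|\Ric|^2$: expanding $\D(S^{-1}|\Ric|^2)$ by the product rule produces the cross term $-4S^{-1}|\na\ln S|\,|\Ric|\,|\na\Ric|$ and the positive term $2S^{-1}|\na\ln S|^2|\Ric|^2$ (from $\D S^{-1}$), and completing the square with $2S^{-1}|\na\Ric|^2$ kills all three at once, leaving the clean inequality $\D u\ge 2u^2-cS^{1/2}u^{3/2}-u\ge u^2-c_0u$. Because the operator is just $\D$, the cutoff step only needs control of $\D\phi$ via the Wei--Wylie comparison, and no $\na\ln S$ ever appears. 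This is exactly your second proposed fix (``absorb into $2|\na\Ric|^2/S$ via Cauchy--Schwarz''), carried out from the outset rather than as a repair; your route through $\Delta_{f-2\ln S}$ simply postpones it. The $u_\varepsilon$ regularisation is therefore not needed.

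A second minor point: your detour through Newton's identities and $\det\Ric$ is also unnecessary. In dimension three $|\Rm|\le c|\Ric|$, so the cubic term satisfies $|R_{ij}R_{iklj}R_{kl}|\le c|\Ric|^3$ directly, which already gives the $u^{3/2}S^{1/2}$ bound you want.
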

\begin{proof}Throughout this proof, we use $c$ to denote an absolute constant, its value may be different from line by line. W.L.O.G, we may assume $M$ is not flat, or else we have nothing to prove. By the strong minimum principle and (\ref{eqn of S}), $S$ is positive on $M$. Since the Weyl tensor is zero in dimension three, it suffices to bound the Ricci tensor.
By (\ref{eqn of Rc in exp})
\begin{eqnarray}\label{ineq for norm of RC sq in exp}
\notag\D |\Ric|^2 &=&2|\na \Ric|^2+2\langle\Ric, \D \Ric\rangle\\
&=&2|\na \Ric|^2-2|\Ric|^2-4R_{ij}R_{iklj}R_{kl}\\
\notag&\geq&2|\na \Ric|^2-2|\Ric|^2-c|\Ric|^3,
\end{eqnarray}
for some absolute constant $c$. Using (\ref{eqn of S}),
\begin{eqnarray}\label{ineq for inv S in exp}
\notag\D S^{-1}&=&-S^{-2}\D S+2S^{-1}|\na \ln S|^2\\
&=&-S^{-2}(-S-2|\Ric|^2)+2S^{-1}|\na \ln S|^2\\
\notag&=&S^{-1}+2S^{-2}|\Ric|^2+2S^{-1}|\na \ln S|^2.
\end{eqnarray}
Hence by Kato's inequality, (\ref{ineq for norm of RC sq in exp}) and (\ref{ineq for inv S in exp}),
\begin{eqnarray}\label{ineq for norm of Ric times invS in exp}
\notag\D (S^{-1}|\Ric|^2)&=&S^{-1}\D |\Ric|^2+2\langle\na S^{-1}, \na |\Ric|^2\rangle+|\Ric|^2\D S^{-1}\\
\notag&\geq&2S^{-1}|\na \Ric|^2-2S^{-1}|\Ric|^2-cS^{-1}|\Ric|^3\\
&&-4S^{-1}|\na \ln S||\Ric||\na \Ric|+S^{-1}|\Ric|^2\\
\notag&&+2S^{-2}|\Ric|^4+2S^{-1}|\na \ln S|^2|\Ric|^2.
\end{eqnarray}
By completing square, we see that
\begin{eqnarray*}
2S^{-1}|\na \Ric|^2-4S^{-1}|\na \ln S||\Ric||\na \Ric|&=&2S^{-1}\Big(|\na\Ric|-|\na \ln S||\Ric|\Big)^2\\
&&-2S^{-1}|\na \ln S|^2|\Ric|^2\\
&\geq&-2S^{-1}|\na \ln S|^2|\Ric|^2.\\
\end{eqnarray*}
\begin{eqnarray*}
\notag\D (S^{-1}|\Ric|^2)&\geq&2S^{-2}|\Ric|^4-cS^{-1}|\Ric|^3-S^{-1}|\Ric|^2.
\end{eqnarray*}
Let $u:=S^{-1}|\Ric|^2$, the above differential inequality can be rewritten as
\begin{eqnarray}
\notag\D u&\geq& 2u^2-cS^{\frac{1}{2}}u^{\frac{3}{2}}-u\\
\label{reduced ineq for norm of RC sq in exp}&\geq& u^2-c(1+S)u\\
\notag &\geq& u^2-c_0u,
\end{eqnarray}
where $c_0$ is a positive constant depending on the global upper bound of the scalar curvature.
Let $R\geq 1$ and $\psi :[0,\infty)\rightarrow \R$ be a smooth real valued function satisfying the following:
$0\leq\psi\leq 1$, $\psi'\leq 0$,
\[ \psi(t)=\begin{cases}
      1 & 0\leq t\leq 1 \\
      0 & 2\leq t\\
   \end{cases}
\]
and
$$|\psi''(t)|+|\psi'(t)|\leq c \text{  for all  } t\geq 0.$$
We consider a function $\phi(x):=\psi(\frac{r(x)}{R})$, then
\[ \phi=\begin{cases}
      1 & \text{  on  } B_R(p_0)\\
      0 & \text{  on  } M\setminus B_{2R}(p_0)\\
   \end{cases}
\]
and
\be\label{gradient of cut off in exp 3 dim}
|\na \phi|=\frac{|\psi'|}{R}\leq \frac{c}{R}.
\ee
Using the Laplacian comparison theorem in \cite{WeiWylie-2009} (see Theorem 3.1), there exists a positive constant $\beta$ (independent on $R$ and $x\in M\setminus B_1(p_0)$) such that on $M\setminus B_1(p_0)$
\be\label{lap comp in exp}
\D r(x)\leq \frac{1}{2}r(x)+\beta.
\ee
Hence
\begin{eqnarray}
\notag\D \phi&=&\frac{\psi'}{R}\D r+\frac{\psi''}{R^2}|\na r|^2\\
\label{ineq for D cut off in exp 3 dim}&\geq&-\frac{c}{R}\big(R+\beta\big)-\frac{c}{R^2}\\
\notag&\geq&-c-\frac{c(1+\beta)}{R}.
\end{eqnarray}
Let $G$ be $\phi^2u$, we have by (\ref{reduced ineq for norm of RC sq in exp}), (\ref{gradient of cut off in exp 3 dim}) and (\ref{ineq for D cut off in exp 3 dim})
\begin{eqnarray}
\notag\phi^2\D G&=& \phi^4\D u+4\phi^3\langle\na \phi, \na(\phi^2u\phi^{-2})\rangle +(2\phi\D\phi+2|\na\phi|^2)G\\
\label{ineq for G in exp 3 dim}&\geq& G^2-c_0G+4\phi\langle\na \phi,\na G\rangle+ (2\phi\D \phi-6|\na \phi|^2)G\\
\notag&\geq& G^2-\big(c_0+c+\frac{c(1+\beta)}{R}\big)G+4\phi\langle\na \phi,\na G\rangle.
\end{eqnarray}
Suppose $G$ attains its maximum at $q$. If $q\in \overline{B_1(p_0)}$, then
$$G\leq G(q)\leq \sup_{\overline{B_1(p_0)}}u,$$
R.H.S. is independent on $R\geq 1$. If $q\in M\setminus\overline{B_1(p_0)}$, then by the maximum principle, we have by (\ref{ineq for G in exp 3 dim}),
$$0\geq G^2(q)-\big(c_0+c+\frac{c(1+\beta)}{R}\big)G(q).$$
In either cases, we get the following bound for $G$:
$$G\leq \sup_{\overline{B_1(p_0)}}u+c_0+c+\frac{c(1+\beta)}{R},$$
result then follows by letting $R\to \infty$.
\end{proof}
For general gradient expander, the scalar curvature may be negative. Nonetheless, $\Rm$ is still bounded if the scalar curvature is bounded.
\begin{thm}\label{no sign bdd S implies bdd Rm in exp}
Let $(M^3, g, f)$ be a $3$ dimensional complete noncompact gradient expanding Ricci soliton. Then there exist positive constant $C_1$ and $C_2$ such that on $M$
\be
|\Rm|\leq C_1S+C_2.
\ee
In particular the curvature tensor is bounded If in addition the scalar curvature $S$ is bounded.
\end{thm}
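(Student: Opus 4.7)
The strategy mirrors that of Theorem \ref{curv estimate in dim 3 exp}\ref{3 dim S control Rm exp}, with the main difference that $S$ may now be negative, so we cannot divide by $S$ directly. To circumvent this, I fix a constant $a > n/2 = 3/2$ and work with the shifted quantity $w := S + a$, which by (\ref{general bdd for S in expander}) satisfies $w \geq a - \frac{3}{2} > 0$ on all of $M$. Since the Weyl tensor vanishes in dimension three, $|\Rm|$ is controlled by $|\Ric|$, so it suffices to establish $|\Ric|^2 \leq Cw$ for some constant $C$; this will yield $|\Ric| \leq \sqrt{C(S+a)}$, and then the elementary inequality $\sqrt{t} \leq (t+1)/2$ for $t\ge 0$ produces the desired linear bound $|\Rm| \leq C_1 S + C_2$.

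Define $u := |\Ric|^2/w$. Using (\ref{eqn of S}) one has $\D w = -w + a - 2|\Ric|^2$, while (\ref{eqn of Rc in exp}) together with the three-dimensional identity $|R_{ij}R_{iklj}R_{kl}| \leq c|\Ric|^3$ (a consequence of vanishing Weyl) gives $\D|\Ric|^2 \geq 2|\na\Ric|^2 - 2|\Ric|^2 - c|\Ric|^3$. Expanding
$$\D u = w^{-1}\D|\Ric|^2 + |\Ric|^2\,\D w^{-1} + 2\la\na w^{-1},\na|\Ric|^2\ra,$$
and controlling the cross term via Kato's inequality $|\na|\Ric|^2| \leq 2|\Ric||\na\Ric|$ combined with Young's inequality to absorb it into $2w^{-1}|\na\Ric|^2$, should produce
$$\D u \;\geq\; 2u^2 - c\sqrt{w}\,u^{3/2} - \Big(1 + \frac{a}{w}\Big)u.$$
A second application of Young's inequality, $c\sqrt{w}\,u^{3/2} \leq u^2 + \frac{c^2}{4}wu$, then yields
$$\D u \;\geq\; u^2 - c_0(1+w)u,$$
with $c_0$ depending only on $a$ and the universal 3D constant.

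The cutoff/maximum-principle argument from the proof of Theorem \ref{curv estimate in dim 3 exp}\ref{3 dim S control Rm exp} now carries over verbatim: with the standard cutoff $\phi = \psi(r/R)$ and the Laplacian comparison (\ref{lap comp in exp}), analyzing $G := \phi^2 u$ at its interior maximum $q$ on $B_{2R}(p_0)\setminus\overline{B_1(p_0)}$ yields, after absorbing the cutoff gradient and Laplacian terms,
$$G(q) \leq c_0(1+w(q)) + C.$$
For $y \in B_R(p_0)$, $u(y)\leq G(q)\leq c_0(1+\sup_{B_{2R}}w)+C$. Letting $R\to\infty$ and invoking the boundedness of $S$ (equivalently of $w$) gives a uniform bound $u \leq K$ on $M$, hence $|\Ric|^2 \leq K(S+a)$ everywhere. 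Combined with the 3D control $|\Rm| \leq \alpha |\Ric|$, this produces $|\Rm| \leq C_1 S + C_2$, and in particular $|\Rm|$ is bounded whenever $S$ is.

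The principal obstacle is the clean absorption of the cross term $2\la\na w^{-1},\na|\Ric|^2\ra$ in the computation of $\D u$. Without the Kato-and-Young step one is left with an uncontrolled contribution from $|\na w|^2 = |\na S|^2 = 4|\Ric(\na f)|^2$; this quantity is not bounded by the remaining nonlinear terms in $u$ alone, since $|\na f|$ grows linearly in $r$ by (\ref{naf nav sqrtv control linearly in exp}). Executing the absorption so that the $|\na\Ric|^2$ contribution is consumed and the surviving $(1+w)u$ coefficient has no dependence on $\sup_M S$ is the only delicate computation; the remainder of the argument is routine.
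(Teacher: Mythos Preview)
Your computation of the differential inequality for $u = |\Ric|^2/w$ is correct, and the cross-term absorption via Kato and completing the square goes through exactly as in the proof of Theorem \ref{curv estimate in dim 3 exp}\ref{3 dim S control Rm exp}. The gap is downstream: the resulting inequality $\D u \geq u^2 - c_0(1+w)u$ carries a linear coefficient that grows with $w$, so the cutoff maximum-principle argument only yields $G(q) \leq c_0(1+w(q)) + C_R$ at the interior maximum $q$. For $y\in B_R(p_0)$ this gives $u(y)\leq c_0(1+w(q))+C_R$, and nothing ties $w(q)$ to $w(y)$. You then explicitly ``invoke the boundedness of $S$'' to pass to a uniform bound $u\leq K$ --- but the first conclusion of the theorem, $|\Rm|\leq C_1 S + C_2$, is asserted \emph{without} assuming $S$ bounded; only the corollary requires that. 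Your argument therefore establishes the ``in particular'' clause (and then the linear inequality follows trivially from $|\Rm|\leq K'$ and $S\geq -\tfrac32$), but not the unconditional statement.

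The obstruction is intrinsic to the quotient construction: the cubic term $cw^{-1}|\Ric|^3 = c\sqrt{w}\,u^{3/2}$ inevitably produces a $wu$ contribution after Young. The paper sidesteps this by taking instead the \emph{linear} combination $u := |\Ric| - AS$ for $A$ large. Since $\D(-AS) = AS + 2A|\Ric|^2$, the term $2A|\Ric|^2$ swallows the $-c|\Ric|^2$ coming from $\D|\Ric|$, while the residual $AS$ is bounded below by $-\tfrac{3A}{2}$ via the universal estimate (\ref{general bdd for S in expander}). One obtains $\D u \geq |\Ric|^2 - |\Ric| - \tfrac{3A}{2}$, a differential inequality with \emph{constant} coefficients; at the interior maximum the resulting quadratic in $\phi^2|\Ric|$ gives a bound independent of $S$, hence $|\Ric| \leq AS + C$ on all of $M$ with no hypothesis on $\sup_M S$.
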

\begin{proof}Since in dimension 3, the curvature tensor is controlled by the Ricci tensor, we shall estimate the Ricci tensor. Using the computation (\ref{ineq for norm of RC sq in exp}) in the proof of Theorem \ref{curv estimate in dim 3 exp}\ref{3 dim S control Rm exp} and Kato's inequality, we have
$$\D|\Ric|\geq -|\Ric|-c|\Ric|^2$$
for some absolute constant $c>0$ wherever $|\Ric|>0$. Hence by (\ref{eqn of S}) and (\ref{general bdd for S in expander})
\begin{eqnarray}\notag
\D (|\Ric|-AS) &\geq& (2A-c)|\Ric|^2-|\Ric|+AS\\
\label{max prin argu for curva in 3 dim in exp without any condition}&\geq& |\Ric|^2-|\Ric|+AS\\
\notag&\geq& |\Ric|^2-|\Ric|-\frac{3A}{2},
\end{eqnarray}
for all large positive constant $A$. Let $u:= |\Ric|-AS$ and $G:=\phi^2u$. For any $R\geq 1$, we consider the cutoff function $\phi(x)=\psi(\frac{r(x)}{R})$ as in (\ref{gradient of cut off in exp 3 dim}), (\ref{lap comp in exp}) and (\ref{ineq for D cut off in exp 3 dim}). Using (\ref{general bdd for S in expander}) and (\ref{max prin argu for curva in 3 dim in exp without any condition}), we compute directly as in (\ref{ineq for G in exp 3 dim}) on the set where $G$ is positive to get
\begin{eqnarray}\notag
\phi^2\D G&=&\phi^4\D u+4\phi\langle\na \phi, \na G\rangle+(2\phi\D \phi-6|\na \phi|^2)G\\
\notag
&\geq&\phi^4\D u-(c+\frac{c(1+\beta)}{R})G+4\phi\langle\na \phi, \na G\rangle\\
\label{ineq for G without bdd curv in dim 3}&\geq&\phi^4\D u-(c+\frac{c(1+\beta)}{R})\phi^2|\Ric|-\frac{3Ac}{2}(1+\frac{(1+\beta)}{R})+4\phi\langle\na \phi, \na G\rangle\\
\notag
&\geq&\phi^4|\Ric|^2-(1+c+\frac{c(1+\beta)}{R})\phi^2|\Ric|-\frac{3A}{2}(1+c+\frac{c(1+\beta)}{R})\\
\notag&&+4\phi\langle\na \phi, \na G\rangle,
\end{eqnarray}
where $\beta$ is the positive constant in (\ref{lap comp in exp}). Suppose $G$ attains its maximum at $q$. If $q\in \overline{B_1(p_0)}$, then
$$G\leq G(q)\leq \sup_{\overline{B_1(p_0)}}(|\Ric|+A|S|),$$
R.H.S. is independent on $R\geq 1$. If $q\in M\setminus\overline{B_1(p_0)}$, we may assume $G(q)>0$. Hence $|\Ric|(q)>0$ and $u$ is smooth near $q$. By the maximum principle, (\ref{general bdd for S in expander}) and (\ref{ineq for G without bdd curv in dim 3}), we have
\begin{eqnarray*}
G&\leq& G(q)\\
&=&\phi^2(q)|\Ric|(q)-\phi^2(q)AS(q)\\
&\leq&\phi^2(q)|\Ric|(q)+\frac{3A}{2}\\
&\leq&1+c+\frac{c(1+\beta)}{R}+\sqrt{\frac{3A}{2}\Big(1+c+\frac{c(1+\beta)}{R}\Big)}+\frac{3A}{2}.
\end{eqnarray*}
Results follows by letting $R\to\infty$.
\end{proof}

To prove Theorem \ref{curv estimate in dim 3 exp}\ref{3 dim S decay implies Rm decay exp}, we need to control the change in distance along the flow of $\na f$. Recall that it is a result of Zhang \cite{Zhang-2009} that the flow of $\na f$ exists for all time $t\in \R$.
\begin{lma}\label{dist estimate along grad flow in exp} Let $(M^n, g, f)$ be an $n$ dimensional complete gradient expanding Ricci soliton and $\phi_s$ be the flow of $\na f$ with $\phi_0$ being the identity map. Then for any $x \in M$ and $s\geq 0$,
\be
r(\phi_s(x))\leq e^{\frac{s}{2}}(r(x)+2\sqrt{v(p_0)}),
\ee
where $v=\frac{n}{2}-f$ and $p_0$ is the base point of the distance function $r$.
\end{lma}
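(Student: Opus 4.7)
The plan is to control the distance function along the integral curve of $\nabla f$ by a Gronwall-type argument, using the linear bound on $|\nabla f|$ already established in (\ref{naf nav sqrtv control linearly in exp}).

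First I would fix $x \in M$ and set $\gamma(s) := \phi_s(x)$, so that $\gamma'(s) = \nabla f(\gamma(s))$ and $\gamma(0) = x$. By the triangle inequality (or, where $r$ is differentiable, by $|\nabla r| = 1$), the Lipschitz function $u(s) := r(\gamma(s))$ satisfies
\be\label{dist-speed}
\frac{d}{ds} u(s) \leq |\gamma'(s)| = |\nabla f|(\gamma(s))
\ee
for almost every $s \geq 0$.

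Next I would invoke the pointwise bound
\bee
|\nabla f|(y) \leq \sqrt{v(y)} \leq \tfrac{1}{2} r(y) + \sqrt{v(p_0)}, \qquad y \in M,
\eee
which is exactly (\ref{naf nav sqrtv control linearly in exp}) and follows from $S \geq -\tfrac{n}{2}$ together with $|\nabla \sqrt{v+\delta}| \leq \tfrac{1}{2}$. Substituting into (\ref{dist-speed}) yields the differential inequality
\bee
u'(s) \leq \tfrac{1}{2} u(s) + \sqrt{v(p_0)}, \qquad u(0) = r(x),
\eee
valid almost everywhere.

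Finally I would integrate by the usual Gronwall trick: setting $w(s) := u(s) + 2\sqrt{v(p_0)}$, the inequality becomes $w'(s) \leq \tfrac{1}{2} w(s)$, so $w(s) \leq w(0) e^{s/2} = (r(x) + 2\sqrt{v(p_0)}) e^{s/2}$, and hence
\bee
r(\phi_s(x)) = u(s) \leq w(s) \leq e^{s/2} \bigl(r(x) + 2\sqrt{v(p_0)}\bigr),
\eee
which is exactly the claimed bound. There is no serious obstacle here; the only mildly delicate point is that $r$ fails to be smooth on the cut locus, but treating $r \circ \gamma$ as an absolutely continuous function and applying (\ref{dist-speed}) almost everywhere (or, equivalently, using $r(\phi_s(x)) \leq r(x) + \int_0^s |\nabla f|(\phi_\tau(x))\,d\tau$ directly from the triangle inequality) circumvents this without any additional work.
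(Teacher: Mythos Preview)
Your proposal is correct and follows essentially the same route as the paper: both invoke the linear bound $|\nabla f|\leq \tfrac{1}{2}r+\sqrt{v(p_0)}$ from (\ref{naf nav sqrtv control linearly in exp}) and then run a Gronwall argument on $r(\phi_s(x))$. The only cosmetic difference is that the paper starts directly from the integral inequality $r_s\leq r_0+\int_0^s|\nabla f|(\phi_\tau(x))\,d\tau$ (exactly the variant you mention at the end) and defines its auxiliary function $w$ as that integral plus $2\sqrt{v(p_0)}$, whereas you differentiate first and set $w=u+2\sqrt{v(p_0)}$; the resulting inequality $w'\leq \tfrac{1}{2}w$ and the conclusion are identical.
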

\begin{proof}
It follows from (\ref{naf nav sqrtv control linearly in exp}) that
\be\label{est of na f in exp}
|\na f|\leq \frac{1}{2}r+\sqrt{v(p_0)}.
\ee
For simplicity, we denote $r(\phi_s(x))$ by $r_s$, $\sqrt{v(p_0)}$ by $c_1$. By triangle inequality and (\ref{est of na f in exp}),
\begin{eqnarray}\notag
r_s&\leq& r_0+\int_0^s|\dot{\phi}_{\tau}(x)|d\tau\\
\label{rs estimate in distance estimate in exp}&=& r_0+\int_0^s|\na f(\phi_{\tau}(x))|d\tau\\
\notag&\leq&r_0+\int_0^s\frac{1}{2}r_{\tau}+c_1d\tau.
\end{eqnarray}
Let $w(s):=r_0+\int_0^s\frac{1}{2}r_{\tau}+c_1d\tau+2c_1$, the above inequality can be rewritten as
$$w'\leq \frac{w}{2}.$$
Integrating the inequality w.r.t. $s$, we have by (\ref{rs estimate in distance estimate in exp})
\begin{eqnarray*}
r_s&\leq& w(s)\\
&\leq&e^{\frac{s}{2}}(r_0+2c_1)\\
&=& e^{\frac{s}{2}}(r(x)+2\sqrt{v(p_0)}).
\end{eqnarray*}
\end{proof}
Theorem \ref{curv estimate in dim 3 exp}\ref{3 dim S decay implies Rm decay exp} is now a consequence of Theorem \ref{no sign bdd S implies bdd Rm in exp} and the following theorem. The proof is motivated by the arguments in \cite{DengZhu-2018} and \cite{MunteanuWang-2017}.
\begin{thm}\label{n dim S control Ric if Rm bdd exp} Let $(M^n, g, f)$ be an $n$ dimensional complete noncompact gradient expanding Ricci soliton with bounded curvature. If $\lim_{x\to\infty} S(x)=0$, then
$$\lim_{x\to\infty}|\Ric|(x)=0.$$
\end{thm}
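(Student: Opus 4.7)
I would argue by contradiction through a Cheeger--Gromov blowup along the self-similar Ricci flow $g(t):=(1+t)\phi_{\ln(1+t)}^*g$ induced by the expander on $t\in(-1,\infty)$, where $\phi_s$ denotes the flow of $\nabla f$. Suppose, for contradiction, that there exist $\epsilon>0$ and $x_k\to\infty$ with $|\Ric|(x_k)\geq\epsilon$, and consider the pointed sequence $(M,g(t),x_k)$. The self-similar scaling gives $|\Rm|_{g(t)}(x)=(1+t)^{-1}|\Rm|_g(\phi_{\ln(1+t)}(x))$, so the global hypothesis $|\Rm|_g\leq K$ furnishes the uniform bound $|\Rm|_{g(t)}\leq K/(1+t)$ on every slab $[-1+\eta,T]\subset(-1,\infty)$. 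Provided a uniform injectivity-radius lower bound at $x_k$ in $(M,g)$ can be secured, Hamilton's compactness theorem produces a subsequential limit $(M_\infty,g_\infty(t),x_\infty)$, a smooth complete pointed Ricci flow on $(-1,\infty)$ with $|\Ric_{g_\infty}|(x_\infty,0)\geq\epsilon>0$.

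The heart of the argument is to show that $S_{g_\infty}\equiv 0$ identically on $M_\infty\times(-1,\infty)$. Running the Gronwall-type computation from Lemma \ref{dist estimate along grad flow in exp} in both directions of $s$, with $|\nabla f|\leq\tfrac12 r+\sqrt{v(p_0)}$ coming from (\ref{naf nav sqrtv control linearly in exp}), one derives the two-sided bound $e^{-|s|/2}r(x)-C\leq r(\phi_s(x))\leq e^{|s|/2}(r(x)+C)$, so $r(\phi_s(x_k))\to\infty$ as $k\to\infty$, uniformly for $s$ in compact subsets of $\mathbb{R}$. The hypothesis $\lim_{x\to\infty}S(x)=0$ then yields
\[
S_{g(t)}(x_k)\;=\;\frac{S_g(\phi_{\ln(1+t)}(x_k))}{1+t}\;\longrightarrow\;0
\]
uniformly for $t$ in compact subsets of $(-1,\infty)$. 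The same estimate applies at any $y\in M$ lying at bounded $g$-distance from $x_k$ (such points also go to infinity in $M$), and therefore the Cheeger--Gromov convergence forces $S_{g_\infty}\equiv 0$ on all of $M_\infty\times(-1,\infty)$.

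The contradiction now comes directly from the scalar-curvature evolution under Ricci flow: since $S_{g_\infty}\equiv 0$ implies $\partial_t S_{g_\infty}\equiv 0$ and $\Delta_{g_\infty}S_{g_\infty}\equiv 0$, the identity $\partial_t S_{g_\infty}=\Delta_{g_\infty}S_{g_\infty}+2|\Ric_{g_\infty}|^2$ forces $\Ric_{g_\infty}\equiv 0$, which is incompatible with $|\Ric_{g_\infty}|(x_\infty,0)\geq\epsilon$. The chief technical obstacle in this plan is verifying the non-collapsing needed to invoke Hamilton's compactness; this can be addressed by a Perelman-type entropy argument adapted to gradient expanders, exploiting the fact that $|\nabla^2 f|=|\tfrac12 g+\Ric|$ is bounded under the curvature hypothesis.
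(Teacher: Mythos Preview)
Your overall strategy matches the paper's: argue by contradiction, use the self-similar Ricci flow $g(t)=(1+t)\psi_t^*g$, control how far points drift under the flow of $\nabla f$ (the paper's Lemma~\ref{dist estimate along grad flow in exp} is precisely your Gronwall computation), pass to a limiting flow which inherits $S\equiv 0$ from the hypothesis, and then read off $\Ric\equiv 0$ from the scalar-curvature evolution equation to reach a contradiction. The distance-drift argument and the final step are essentially identical.

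The one substantive divergence is exactly the point you flag as the ``chief technical obstacle'': you aim for a global pointed Cheeger--Gromov limit and therefore need an injectivity-radius (non-collapsing) bound at $x_k$, which you leave to an unspecified Perelman-type argument. The paper sidesteps this entirely. Instead of taking a global limit, it rescales by $P_k:=|\Ric|(y_k)\geq\varepsilon_0$, sets $g_k(t):=P_k\,g(t/P_k)$, and pulls back by the exponential map $F_k:=\exp_{y_k}^{g_k(0)}\circ\iota_k:\mathbb{R}^n\to M$. Rauch comparison (using only the uniform curvature bound) makes $F_k$ a local diffeomorphism on a Euclidean ball $B_\delta(0)\subset\mathbb{R}^n$ of fixed radius $\delta=\delta(\varepsilon_0,\sup|\Rm|,n)$, and Hamilton's \emph{local} compactness (as in \cite{DengZhu-2018}) then gives a limiting flow $h_\infty(t)$ on $B_\delta(0)$. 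Since the contradiction is purely pointwise at the origin---one only needs $|\Ric(h_\infty(0))|(0)=1$ versus $S_{h_\infty}\equiv 0$ on the ball---this local limit suffices, and no injectivity-radius or entropy input is required. This is the technical device you are missing; once you adopt it, your argument becomes complete and coincides with the paper's.
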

\begin{proof} We argue by contradiction. Suppose the claim is not true. Then there exist a sequence $y_k\in M$ $\to \infty$ as $k\to\infty$ and $\varepsilon_0$ $\in (0,1)$ such that for all $k$
\be\label{Pk lower bdd in exp}
P_k:=|\Ric(g)|(y_k)\geq \varepsilon_0.
\ee
Since $|\Rm|$ is assumed to be bounded on $M$, we see that the derivatives of $\Rm$ of any orders are also bounded by the Shi's derivative estimate (see \cite{ChowLuNi-2006}). For any nonnegative integer $l$, we may let $L_l:=\sup_M|\na^l \Rm(g)|<\infty$. Let $g(t)$ be the Ricci flow generated by the soliton, i.e. $g(t)=(1+t)\psi_t^*g$ for all $t>-1$, where $\psi_t$ is the flow of the non-autonomous vector field $\frac{\na f}{1+t}$ with $\psi_0$ being the identity map. We are going to rescale $g(t)$ near $y_k$. We consider the flow $g_k(t):=P_k g(\frac{t}{P_k})=(P_k+t)\psi_{\frac{t}{P_k}}^*g$. Then all $t\in [-\frac{\varepsilon_0}{10}, \frac{\varepsilon_0}{10}]$ and $z\in M$
\begin{eqnarray*}
|\Rm(g_k(t))|_{g_k(t)}(z)&=&\frac{1}{P_k}|\Rm((1+\frac{t}{P_k})\psi_{\frac{t}{P_k}}^*g)|_{(1+\frac{t}{P_k})\psi_{\frac{t}{P_k}}^*g}(z)\\
&=&\frac{1}{P_k+t}|\Rm(g)|_g(\psi_{\frac{t}{P_k}}(z))\\
&\leq&\frac{10}{9\varepsilon_0}L_0.
\end{eqnarray*}
Similarly for all the derivatives of the curvature $\Rm$,
\begin{eqnarray*}
|\na ^l_{g_k(t)}\Rm(g_k(t))|_{g_k(t)}(z)&=&(P_k+t)|\na ^l_{g_k(t)}\Rm(\psi_{\frac{t}{P_k}}^*g)|_{g_k(t)}(z)\\
&=&(P_k+t)|\na ^l_{\psi_{\frac{t}{P_k}}^*g}\Rm(\psi_{\frac{t}{P_k}}^*g)|_{g_k(t)}(z)\\
&=& \frac{1}{(P_k+t)^{\frac{l+2}{2}}}|\na ^l_{\psi_{\frac{t}{P_k}}^*g}\Rm(\psi_{\frac{t}{P_k}}^*g)|_{\psi_{\frac{t}{P_k}}^*g}(z)\\
&=& \frac{1}{(P_k+t)^{\frac{l+2}{2}}}|\na^l_g\Rm(g)|_g(\psi_{\frac{t}{P_k}}(z))\\
&\leq&\Big(\frac{10}{9\varepsilon_0}\Big)^{\frac{l+2}{2}}L_l.
\end{eqnarray*}
For all $k\in\mathbb{N}$, let $\iota_k:\R^n\to T_{y_k}M$ (with metric $g_k(0)$) be any linear isometry such that $\iota_k(0)=0$ and $F_k:=\ex^{g_k(0)}_{y_k}\circ\iota_k$, where $\ex^{g_k(0)}_{y_k}$ denotes the exponential map w.r.t. $g_k(0)$ at $y_k$. By Rauch comparison theorem, $F_k$ is a local diffeomorphism on $\{x\in \R^n: |x|<\sqrt{\frac{9\pi^2\varepsilon_0}{10L_0}}\}$. By Hamilton compactness theorem (see \cite{Hamilton-1995.2}, \cite{Chowetal-2007} and the proof of Lemma 4.4 in \cite{DengZhu-2018}), there exist positive $\delta$ (depending only on $\varepsilon_0$, $L_0$ and $n$) and subsequence $k_j$ such that as $j$ $\to$ $\infty$
\be
\big(B_{\delta}(0), F_{k_j}^*g_{k_j}(t)\big)\to \big(B_{\delta}(0), h_{\infty}(t)\big)
\ee
in $C^{\infty}_{loc}$ sense on $B_{\delta}(0)\times (-\frac{\varepsilon_0}{10}, \frac{\varepsilon_0}{10})$, where $B_{\delta}(0):=\{x\in\R^n: |x|<\delta\}$ and $h_{\infty}(t)$ is a solution to the Ricci flow. Moreover, by the local smooth convergence of the metric, we have
\begin{eqnarray*}
|\Ric(h_{\infty}(0))|_{h_{\infty}(0)}(0)&=&\lim_{j\to\infty}|\Ric(g_{k_j}(0))|_{g_{k_j}(0)}(y_{k_j})\\
&=&\lim_{j\to\infty}\frac{1}{P_{k_j}}|\Ric(g)|_g(y_{k_j})\\
&=&\lim_{j\to\infty}1\\
&=&1.
\end{eqnarray*}
Hence $h_{\infty}(0)$ is not Ricci flat. We claim that $h_{\infty}(t)$ is scalar flat for all $t\in (-\frac{\varepsilon_0}{10},0]$. We first assume the claim and prove the theorem. By the evolution equation of the scalar curvature along the Ricci flow (see \cite{ChowLuNi-2006})
$$2|\Ric(h_{\infty}(t))|^2_{h_{\infty}(t)}=\frac{\partial}{\partial t}S_{h_{\infty}(t)}-\Delta_{{h_{\infty}(t)}}S_{h_{\infty}(t)}=0,$$
which is impossible. It remains to justify our claim, i.e. $h_{\infty}(t)$ is scalar flat. For all $(z, t)\in B_{\delta}(0)\times(-\frac{\varepsilon_0}{10},0],$
the scalar curvature with respect to the metric $h_{\infty(t)}$ satisfies
\begin{eqnarray}\notag
S_{h_{\infty(t)}}(z)&=&\lim_{j\to\infty}S_{F_{k_j}^*g_{k_j}(t)}(z)\\
\label{scalar of the limiting soln in exp}&=&\lim_{j\to\infty}\frac{1}{(P_{k_j}+t)}S_{\psi_{\frac{t}{P_{k_j}}}^*g}(F_{k_j}(z))\\
\notag&=&\lim_{j\to\infty}\frac{1}{(P_{k_j}+t)}S_g(\psi_{\frac{t}{P_{k_j}}}\circ F_{k_j}(z)).
\end{eqnarray}
By the assumption $\lim_{x\to\infty}S(x)=0$, we are done if $\lim_{j\to\infty}\psi_{\frac{t}{P_{k_j}}}(F_{k_j}(z))=\infty$. Since $r_k:=d(y_k,p_0)\to\infty$ as $k\to\infty$, there is a  $N_0\in \mathbb{N}$ such that for all $j\geq N_0$,
\be\label{contradict ineq for decay of Rm in exp}
r_{k_j}>\sqrt{\frac{10}{9}}\Big[\sqrt{r_{k_j}}+2\sqrt{v(p_0)}\Big]+\frac{\delta}{\sqrt{\varepsilon_0}}.
\ee
We are going to show that for all $j\geq N_0$, $z\in B_{\delta}(0)$ and $t\in (-\frac{\varepsilon_0}{10},0]$
\be\label{dist estimate for limiting RF in exp}
d(\psi_{\frac{t}{P_{k_j}}}(F_{k_j}(z)), p_0)\geq \sqrt{r_{k_j}}.
\ee
Assume by contradiction that it is not true. Then $d(\psi_{\frac{t}{P_{k_j}}}(F_{k_j}(z)), p_0)< \sqrt{r_{k_j}}$ for some $j\geq N_0$, $z\in B_{\delta}(0)$ and $t\in (-\frac{\varepsilon_0}{10},0]$. We consider the flow $\phi_s$ of $\na f$ with $\phi_0$ being the identity map, it is related to $\psi_t$ in the following way
$$\psi_t=\phi_{\ln(1+t)} \text{  for all  } t>-1.$$
By Lemma \ref{dist estimate along grad flow in exp} and $\phi_{-\ln(1+\frac{t}{P_{k_j}})}\circ \psi_{\frac{t}{P_{k_j}}}(F_{k_j}(z))=F_{k_j}(z)$,
\begin{eqnarray}\notag
d(F_{k_j}(z),p_0)&\leq&  \frac{1}{\big(1+\frac{t}{P_{k_j}}\big)^{\frac{1}{2}}}\Big[d(\psi_{\frac{t}{P_{k_j}}}(F_{k_j}(z)) ,p_0)+2\sqrt{v(p_0)}\Big]\\
\label{dist estimate for Fkz}&\leq&\sqrt{\frac{10}{9}}\Big[\sqrt{r_{k_j}}+2\sqrt{v(p_0)}\Big],
\end{eqnarray}
we used $\frac{t}{P_k}\geq -\frac{\varepsilon_0}{10 P_k}\geq -\frac{1}{10}$ in the last inequality. By the definition of $F_k:=\ex^{g_k(0)}_{y_k}\circ\iota_k$,
$$d_{g_{k_j}(0)}(F_{k_j}(z), y_{k_j})=\sqrt{P_{k_j}}d_g(F_{k_j}(z), y_{k_j})\leq \delta.$$
Furthermore by triangle inequality and (\ref{dist estimate for Fkz}),
\begin{eqnarray*}
r_{k_j}-\frac{\delta}{\sqrt{P_{k_j}}}&\leq& d_g(F_{k_j}(z),p_0)\\
&\leq&\sqrt{\frac{10}{9}}\Big[\sqrt{r_{k_j}}+2\sqrt{v(p_0)}\Big],
\end{eqnarray*}
which is absurd by (\ref{Pk lower bdd in exp}) and (\ref{contradict ineq for decay of Rm in exp}). We proved that (\ref{dist estimate for limiting RF in exp}) holds. $S_{h_{\infty}(t)}\equiv 0$ now follows from (\ref{Pk lower bdd in exp}), (\ref{scalar of the limiting soln in exp}) and (\ref{dist estimate for limiting RF in exp}).

\end{proof}
\begin{remark}\label{derivative of Ric goes to 0}
By choosing $P_k=\big(|\na ^l \Ric|(y_k)\big)^{\frac{2}{l+2}}$ in \eqref{Pk lower bdd in exp}, where $l$ is any positive integer, we can use a similar argument to show that $\lim_{x\to\infty}|\na ^l \Ric|=0$ under the same assumptions of Theorem \ref{n dim S control Ric if Rm bdd exp}.
\end{remark}

\begin{proof}[\textbf{Proof of Theorem \ref{curv estimate in dim 3 exp}\ref{3 dim S decay implies Rm decay exp}:}]
By Theorem \ref{no sign bdd S implies bdd Rm in exp}, $M$ has bounded curvature. We then apply Theorem \ref{n dim S control Ric if Rm bdd exp} to conclude that $\Ric\to 0$ as $x\to \infty$. Since the Weyl tensor is zero in dimension 3, we have $\lim_{x\infty} |\Rm|=0$.
\end{proof}

Corollary \ref{proper f in 3 dim} is a consequence of the following proposition and Theorem \ref{curv estimate in dim 3 exp}\ref{3 dim S decay implies Rm decay exp}. The proposition is known and we give a proof for the sake of completeness.
\begin{prop}\label{v equiv r2 under Ric lower bdd decay to 0}Let $(M^n, g, f)$ be an $n$ dimensional complete noncompact gradient expanding Ricci soliton. If $\liminf_{x\to\infty}\Ric\geq 0$, then
$$\lim_{x\to \infty} \frac{4v(x)}{r^2(x)}=1,$$
where $v=\frac{n}{2}-f$.
\end{prop}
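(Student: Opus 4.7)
I would prove Proposition \ref{v equiv r2 under Ric lower bdd decay to 0} by establishing matching upper and lower bounds on $v/r^2$, both of which reduce to ODE comparison along minimizing geodesics. The key algebraic input is that differentiating the soliton identity $\Ric + \na^2 f = -\tfrac12 g$ gives
\[
\na^2 v = \Ric + \tfrac{1}{2}g,
\]
so the Hessian of $v$ is controlled directly by the Ricci curvature.

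For the upper bound, my plan is to rely only on the general estimate (\ref{general bdd for S in expander}), which gives $|\na v|^2 = v - \tfrac{n}{2} - S \leq v$. Hence for any $\delta>0$, $|\na \sqrt{v+\delta}| \leq \tfrac12$, and integrating along a minimizing geodesic from $p_0$ to an arbitrary point $x$ and sending $\delta \to 0$ yields $\sqrt{v(x)} \leq \sqrt{v(p_0)} + \tfrac12 r(x)$. Squaring and dividing by $r(x)^2$ gives $\limsup_{x\to\infty}\frac{4v(x)}{r^2(x)} \leq 1$; this step is routine, so it simply uses ingredients already collected in Section 2.

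For the lower bound, I would fix $\varepsilon>0$ and invoke the hypothesis $\liminf_{x\to\infty}\Ric\geq 0$ to find $R>0$ such that $\Ric \geq -\varepsilon g$, equivalently $\na^2 v \geq (\tfrac12-\varepsilon)g$, on $M\setminus B_R(p_0)$. Take a unit-speed minimizing geodesic $\gamma:[0,r(x)]\to M$ with $\gamma(0)=p_0$, $\gamma(r(x))=x$, and for $r(x) > R$ set $\eta(t):=\frac{d}{dt}v(\gamma(t)) = \la \na v, \gamma'(t)\ra$. Because $\gamma$ is minimizing from $p_0$, $d(\gamma(t),p_0)=t$, so for $t\geq R$ we have $\gamma(t)\notin B_R(p_0)$ and consequently $\eta'(t) = \na^2 v(\gamma',\gamma') \geq \tfrac12-\varepsilon$. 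Integrating twice from $t=R$ to $t=r(x)$ gives
\[
v(x) \geq v(\gamma(R)) + \eta(R)(r(x)-R) + \tfrac12(\tfrac12-\varepsilon)(r(x)-R)^2,
\]
where $v(\gamma(R))$ and $\eta(R)$ are bounded by constants depending only on $\overline{B_R(p_0)}$ (for $\eta(R)$, use $|\eta(R)|\leq |\na v(\gamma(R))|\leq \sqrt{v(\gamma(R))}$). Dividing by $r(x)^2$ and taking $r(x)\to\infty$ yields $\liminf_{x\to\infty}\frac{4v(x)}{r^2(x)} \geq 1-2\varepsilon$; letting $\varepsilon\to 0$ finishes the argument.

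The main conceptual point is recognizing that the Hessian identity $\na^2 v = \Ric+\tfrac12 g$ converts the asymptotic Ricci hypothesis into a one-sided quadratic growth control of $v$ along geodesics, matching the universal upper bound from the $|\na v|^2\leq v$ side. The only mild subtlety is that the minimizing geodesic from $p_0$ must leave $B_R(p_0)$ exactly at $t=R$ and stay outside thereafter, which is automatic for a unit-speed minimizer issuing from $p_0$; no further geometric input (Laplacian comparison, volume comparison, etc.) is needed.
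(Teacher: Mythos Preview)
Your proposal is correct and follows essentially the same route as the paper: the upper bound comes from $|\na v|^2\leq v$ (the paper's (\ref{naf nav sqrtv control linearly in exp})), and the lower bound comes from the Hessian identity $\na^2 v=\Ric+\tfrac12 g$, integrated twice along a unit-speed minimizing geodesic from $p_0$ on the portion outside $B_R(p_0)$. The paper bounds the initial derivative term by $\sup_{\overline{B_{R_0}(p_0)}}|\na v|$ rather than your $\sqrt{v(\gamma(R))}$, but this is an immaterial difference.
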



\begin{proof}
It can be seen from (\ref{naf nav sqrtv control linearly in exp}) that $\overline{\lim}_{x\to\infty}\frac{v(x)}{r^2(x)}\leq \frac{1}{4}$. Since $\liminf_{x\to\infty}\Ric\geq 0$, for any small positive number $\varepsilon$, $\exists$ $R_0\gg 1$ such that on $M\setminus B_{R_0}(p_o)$
\begin{eqnarray*}
\na^2v&=&-\na^2 f\\
&=&\Ric+\frac{1}{2}g\\
&\geq&(\frac{1}{2}-\varepsilon)g.
\end{eqnarray*}
For any $x$ in $M\setminus B_{R_0}(p_o)$, consider a normalized minimizing geodesic $\gamma$ joining $p_0$ to $x$. We integrate the above inequality along the geodesic to get
$$\langle\na v, \dot{\gamma}\rangle(\gamma(t))-\langle\na v, \dot{\gamma}\rangle(\gamma(R_0))\geq (\frac{1}{2}-\varepsilon)(t-R_0),$$
where $t\geq R_0$. By integrating the inequality w.r.t. $t$, we see that
\begin{eqnarray*}
v(x)-v(\gamma(R_0))&\geq& (\frac{1}{4}-\frac{\varepsilon}{2})(r(x)-R_0)^2+\langle\na v, \dot{\gamma}\rangle(\gamma(R_0))(r(x)-R_0)\\
&\geq&(\frac{1}{4}-\frac{\varepsilon}{2})(r(x)-R_0)^2-\sup_{\overline{B_{R_0}(p_o)}}|\na v|(r(x)-R_0).
\end{eqnarray*}
Hence it is clear that $\underline{\lim}_{x\to\infty}\frac{v(x)}{r^2(x)}\geq \frac{1}{4}-\frac{\varepsilon}{2}$. We let $\varepsilon\to 0$ and conclude that $\lim_{x\to\infty}\frac{4v}{r^2}=1$.
\end{proof}

\begin{lma}\label{r2Rm to 0} Let $(M^3,g,f)$ be a $3$ dimensional complete non-compact gradient expanding Ricci soliton. If $\displaystyle\lim_{x\to \infty} r^2(x)S(x)=0$, then $\displaystyle\lim_{x\to \infty} r^2(x)|\Rm|(x)=0$.
\end{lma}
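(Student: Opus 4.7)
The plan is to argue by contradiction via a conic blow-up at infinity, where the key new ingredient is that the rescaled soliton potential acquires a Hessian equal to $-\tfrac{1}{2}$ times the metric in the limit. In dimension three, combined with vanishing scalar curvature, this forces the limit to be flat.

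Observe first that $r(x)\to\infty$ at infinity, so the hypothesis $\lim r^2 S=0$ implies $\lim S=0$ and hence $|\Rm|\to 0$ by Theorem \ref{curv estimate in dim 3 exp}\ref{3 dim S decay implies Rm decay exp}; Corollary \ref{proper f in 3 dim} then yields $v/r^2\to 1/4$ and $|\na f|/r\to 1/2$ at infinity. Suppose for contradiction that there are $y_k\to\infty$ and $\varepsilon_0>0$ with $r(y_k)^2|\Rm|(y_k)\geq\varepsilon_0$. Set $\lambda_k:=r(y_k)^{-2}$ and consider the conically rescaled data $g_k:=\lambda_k g$, $\tilde f_k:=\lambda_k f$ pointed at $y_k$. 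Because $r_g(z)\sim r(y_k)$ on $B_{g_k}(y_k,\delta)$ for small $\delta$, the assumption $r^2 S\to 0$ gives $S_{g_k}(z)=\lambda_k^{-1}S(z)\to 0$ uniformly on this ball, while $|\Rm(g_k)|_{g_k}(y_k)=\lambda_k^{-1}|\Rm|(y_k)\geq\varepsilon_0$.

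I next want to extract a smooth pointed Cheeger-Gromov limit $(N,h_\infty,\tilde f_\infty,y_\ast)$. The global bound $|\Rm|\leq L_0$ from Theorem \ref{no sign bdd S implies bdd Rm in exp} together with $|\Rm|\to 0$ at infinity allow Shi-type derivative estimates at the natural scale $r(y_k)$ to produce uniform bounds for every $|\na^\ell\Rm(g_k)|_{g_k}$ on $B_{g_k}(y_k,\delta)$, while $\kappa$-noncollapsing for gradient expanders with proper potential furnishes a uniform lower bound for $\mathrm{inj}_{g_k}(y_k)$. Rewriting the soliton equation as
\[
\na^2\tilde f_k=-\tfrac{1}{2}g_k-\lambda_k\Ric(g_k),
\]
the tensor norm $|\lambda_k\Ric(g_k)|_{g_k}=|\Ric|_g$ tends to $0$ uniformly on $B_{g_k}(y_k,\delta)$, so the limit satisfies $\na^2\tilde f_\infty=-\tfrac{1}{2}h_\infty$ on $N$. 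Moreover $|\na\tilde f_k|_{g_k}^2(y_k)=|\na f|_g^2(y_k)/r(y_k)^2\to 1/4$, giving $|\na\tilde f_\infty|_{h_\infty}(y_\ast)=1/2\neq 0$, while smooth convergence yields $|\Rm(h_\infty)|(y_\ast)\geq\varepsilon_0$ and $S_{h_\infty}\equiv 0$ on $N$.

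Finally, differentiating $\na^2\tilde f_\infty=-\tfrac{1}{2}h_\infty$ gives $\na^3\tilde f_\infty\equiv 0$, and the Ricci commutation identity $[\na_i,\na_j]\na_k\tilde f_\infty=R_{ijkl}\na^l\tilde f_\infty$ forces $\Rm(h_\infty)(\cdot,\cdot,\cdot,\na\tilde f_\infty)\equiv 0$. In dimension three, choosing an orthonormal frame $\{\na\tilde f_\infty/|\na\tilde f_\infty|,e_1,e_2\}$ leaves only $R_{1212}$ as a possibly nonzero component of $\Rm$, and the identity $S_{h_\infty}=2R_{1212}=0$ then implies $\Rm(h_\infty)\equiv 0$ wherever $\na\tilde f_\infty\neq 0$, in particular at $y_\ast$, contradicting $|\Rm(h_\infty)|(y_\ast)\geq\varepsilon_0$. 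The principal obstacle is securing the uniform higher-derivative and injectivity-radius bounds required for the Cheeger-Gromov extraction; these rest on Shi's estimates at scale $r(y_k)$ (available because $|\Rm|$ is globally bounded and decays at infinity) and on noncollapsing for expanders with proper potential.
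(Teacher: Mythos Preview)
Your limiting argument is elegant and the endgame is correct: once a smooth pointed limit $(N,h_\infty,\tilde f_\infty)$ exists with $\na^2\tilde f_\infty=-\tfrac12 h_\infty$, $S_{h_\infty}\equiv 0$ and $\na\tilde f_\infty(y_\ast)\neq 0$, the commutation identity kills all curvature components involving the $\na\tilde f_\infty$ direction, and in dimension three the remaining component is $R_{1212}=\tfrac12 S_{h_\infty}=0$. The problem is that the Cheeger--Gromov extraction is not actually secured by the ingredients you cite. First, you only assume $r(y_k)^2|\Rm|(y_k)\geq\varepsilon_0$; you never bound $|\Rm(g_k)|_{g_k}(z)=r(y_k)^2|\Rm|(z)$ on the ball $B_{g_k}(y_k,\delta)$. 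This requires $r^2|\Rm|$ to be bounded near infinity, which is essentially half of what you are trying to prove; if $\limsup r^2|\Rm|=\infty$ the rescaled curvatures blow up and no limit can be taken. Second, even granting a $C^0$ bound, ``Shi's estimates at scale $r(y_k)$'' are not available from the facts you list: the associated Ricci flow exists only for $t>-1$, so after rescaling by $\lambda_k=r(y_k)^{-2}$ the backward time is $\lambda_k\to 0$, and the drift in the elliptic soliton equation for $\Rm$, written in the $g_k$ scale, has coefficient $|\na^{g_k} f|_{g_k}\sim r(y_k)^2$, which blows up. Neither global boundedness of $|\Rm|$ nor its qualitative decay furnishes derivative control at the scale $r(y_k)$.

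The paper avoids compactness entirely by a direct two--step bootstrap that is in spirit the quantitative version of your limiting identity. It uses the soliton relation $R_{ijkl}f^l=R_{kj,i}-R_{ki,j}$ (this is exactly your commutation step before passing to the limit) together with $|\na f|\sim r$ to get $R_{ijk\nu}=O(|\na\Ric|/r)$, and the three--dimensional Weyl--free decomposition to express $\Ric$ in terms of $R_{\nu ij\nu}$, $R_{i\nu}$ and $S$. Starting from $|\Rm|=o(1)$ and a fixed--scale Shi estimate (Lemma~2.6 in \cite{Deruelle-2017}) one gets $|\na\Rm|=o(1)$, hence $R_{ijk\nu}=o(r^{-1})$, hence $|\Ric|=o(r^{-1})$; one more pass yields $|\na\Rm|=o(r^{-1})$, $R_{ijk\nu}=o(r^{-2})$, and finally $|\Rm|=o(r^{-2})$. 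If you want to rescue the blow--up route, you would first need an a priori bound $r^2|\Rm|\leq C$ (e.g.\ via point picking together with a separate argument ruling out $\limsup r^2|\Rm|=\infty$), and then a genuine source of scale--$r(y_k)$ regularity; neither step is supplied.
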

\begin{proof} By Theorem \ref{curv estimate in dim 3 exp}\ref{3 dim S decay implies Rm decay exp}, $|\Rm|=o(1)$. Using local Shi's estimate (see Lemma 2.6 in \cite{Deruelle-2017}), there exists a positive constant $C$ such that for all $p\in$ $M$ and $R\geq 1$
\be\label{Shi estimate expander}
|\na \Rm|(p)\leq C\sup_{B_R(p)}|\Rm|\left[1+\sup_{B_R(p)}|\Rm|+\frac{\sup_{B_R(p)\setminus B_{\frac{R}{2}}(p)}|\na f|}{R}\right]^{\frac{1}{2}}.
\ee
Hence $|\na \Rm|=o(1)$. By Corollary \ref{proper f in 3 dim}, (\ref{eqn of naf}) and (\ref{naf nav sqrtv control linearly in exp}),
\be\label{na f equiv r in dim 3 exp}
C^{-1}r\leq |\na f| \leq Cr
\ee
near infinity for some constant $C>0$. Let $\nu:=\frac{\na f}{|\na f|}$. In dimension three, the Weyl tensor vanishes and (see also \cite{Deruelle-2017})
\be\label{weyl zero in dim 3}
R_{ij}=R_{\nu ij \nu}+ \frac{S}{2}(g_{ij}- g_{i\nu}g_{j\nu})- R_{\nu\nu}g_{ij}+ R_{j\nu}g_{i\nu}+ R_{i\nu}g_{j\nu}.
\ee
To proceed, we need the following identity for gradient Ricci solitons which follows from Ricci identity and (\ref{eq-RS-2}):
\be
R_{ij,k}-R_{ik,j}=R_{kjil}f_l.
\ee
By (\ref{na f equiv r in dim 3 exp}), we see that
\begin{eqnarray}\label{curvature in normal direction}
R_{ijk\nu}&=&\frac{R_{kj,i}-R_{ki,j}}{|\na f|}\\
          \notag &=& o(r^{-1}).
\end{eqnarray}
By (\ref{weyl zero in dim 3}), It is now clear that $|\Rm|\leq c|\Ric|=o(r^{-1})$. By (\ref{Shi estimate expander}) and (\ref{curvature in normal direction}) again, we have $|\na \Rm|=o(r^{-1})$ and $R_{ijk\nu}=o(r^{-2})$. $|\Rm|=o(r^{-2})$ then follows from (\ref{weyl zero in dim 3}).
\end{proof}

Before we move on, let us recall some basic definitions:
\begin{definition}(\cite{Schoen-1989}, \cite{SchoenYau-2017}) An $n$ dimensional complete Riemannian manifold $M$ is asymptotically flat if there exist $R_0>0$, compact set $K\subseteq M$ and diffeomophism $\psi: M\setminus K \rightarrow \R^{n}\setminus \{|x|\leq R_0\}$ such that in this coordinate
$$|x|^p|g_{ij}-\delta_{ij}|+|x|^{1+p}|\partial g_{ij}|+|x|^{2+p}|\partial^2 g_{ij}|\leq C \text{  as } x\to \infty,$$
for some $p>\frac{n-2}{2}$ and $C>0$, where $\partial$ denotes the partial derivative. We also require the scalar curvature $S=O(|x|^{-q})$ for some $q>n$.
\end{definition}
Let $X$ be any smooth $n-1$ dimensional closed manifold with Riemannian metric $g_X$, $C(X)$ is defined to be the cone over $X$, i.e. $\{(t,\omega):$  $t>0, \omega \in X\}$. $g_C$ and $\na_C$ denote the metric $dt^2+t^2g_X$ on $C(X)$ and its Riemannian connection respectively. $\overline{B(o, R)}\subseteq C(X)$ is the set given by $\{(t,\omega):$  $R\geq t>0, \omega \in X\}$.
With the above preparations, we are going to prove Theorem \ref{gap thm for 3 dim exp}:
\begin{thm*} Let $(M,g,f)$ be a $3$ dimensional complete non-compact gradient expanding Ricci soliton. Suppose that
$$\displaystyle\lim_{x\to \infty} r^2(x)S(x)=0.$$
Then $M$ is isometric to $\R^{3}$.
\end{thm*}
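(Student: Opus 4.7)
My plan is to combine the curvature estimates established earlier in the paper with Deruelle's asymptotic analysis and the three-dimensional positive mass theorem. The steering idea is that, under the hypothesis $\lim_{x\to\infty}r^2S=0$, the expander must be asymptotically flat with non-negative scalar curvature, and the self-similar structure forces the ADM mass to vanish.

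First, I would upgrade the scalar decay to full curvature decay and sign information on $S$. The hypothesis gives $S\to 0$, so Theorem \ref{curv estimate in dim 3 exp}\ref{3 dim S decay implies Rm decay exp} yields $|\Rm|\to 0$. Lemma \ref{r2Rm to 0} then improves this to $r^2|\Rm|\to 0$ and in particular $r^2|\Ric|\to 0$. Corollary \ref{non -ve scalar bdd for Ricci flat conical expander} now applies, giving $S\geq 0$ on all of $M$ (and hence $M$ is connected at infinity). Furthermore, by Corollary \ref{proper f in 3 dim} and Proposition \ref{v equiv r2 under Ric lower bdd decay to 0}, $f$ is proper with $v=\frac{n}{2}-f\sim r^2/4$, so the gradient flow of $\na f$ gives good radial control.

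Second, I would convert $r^2|\Rm|\to 0$ into genuine asymptotic flatness. By Chen \cite{Chen-2012} and Chen-Deruelle \cite{ChenDeruelle-2015}, the tangent cone at infinity is a unique flat metric cone $C(\Sigma)$; Deruelle's asymptotic curvature estimates \cite{Deruelle-2017} provide a diffeomorphism from the end of $M$ to the complement of a compact set in $C(\Sigma)$ together with decay of $g-g_C$ and its derivatives at a rate $o(r^{-\tau})$ for some $\tau>0$. Since $M$ is connected at infinity, $\Sigma$ is connected; in dimension two it is a spherical space form. Using either the orientability of $3$-manifolds or a direct analysis of the level sets of $f$ (which, by $r^2|\Rm|\to 0$ and $v\sim r^2/4$, are asymptotically round spheres), one rules out the $\R\mathbb{P}^2$ cross-section. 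Thus the cone is $\R^3$ and $M$ is asymptotically flat in the standard sense.

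Third, I would apply the positive mass theorem. With $M$ asymptotically flat of dimension three and $S\geq 0$, the three-dimensional PMT yields $m(g)\geq 0$, with equality iff $M$ is isometric to $\R^3$. To obtain the reverse inequality, I would exploit the self-similar Ricci flow $g(t)=(1+t)\psi_t^*g$: as a Riemannian manifold, $(M,g(t))$ is isometric to $(M,(1+t)g)$, so $m(g(t))=(1+t)^{(n-2)/2}m(g)=(1+t)^{1/2}m(g)$. On the other hand, the ADM mass is invariant under Ricci flow on asymptotically flat three-manifolds with the decay present here (Dai-Ma), so $m(g(t))=m(g)$ for all $t>-1$. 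Comparing forces $m(g)=0$, and the rigidity clause of PMT finishes the proof.

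The main obstacle I anticipate is verifying that the decay extracted from $r^2|\Rm|\to 0$ is strong enough to both (i) legitimately enter the 3-dimensional PMT (one needs $g-g_E=O(r^{-\tau})$ with $\tau>1/2$, plus matching derivative estimates and, ideally, $S\in L^1(M)$), and (ii) legitimately invoke the Ricci-flow invariance of the ADM mass. Both will likely follow by integrating the soliton equation $\Ric+\na^2 f=-\tfrac12 g$ in Deruelle's asymptotic chart, combined with the Shi-type bound (\ref{Shi estimate expander}) and the improved decays from Remark \ref{derivative of Ric goes to 0}. Identifying the cross-section $\Sigma$ with the round $\mathbb{S}^2$ (rather than a nontrivial quotient) is a secondary but real obstacle that I would handle by combining the orientability of the end of a three-manifold with the single-end property from $S\geq 0$.
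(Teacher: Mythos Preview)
Your overall architecture matches the paper's: upgrade $r^2S\to 0$ to $r^2|\Rm|\to 0$ via Lemma \ref{r2Rm to 0}, feed this into Deruelle's asymptotic theory to identify the end with a flat cone, secure $S\geq 0$, and then invoke the rigidity case of the positive mass theorem. The differences are in two places.

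First, the vanishing of the ADM mass. You propose to deduce $m(g)=0$ from the scaling law $m((1+t)g)=(1+t)^{1/2}m(g)$ together with the Dai--Ma invariance of the mass under Ricci flow. The paper takes a much shorter route: Deruelle's Theorem 1.3 in \cite{Deruelle-2017} gives not merely $o(r^{-\tau})$ decay but \emph{exponential} decay, namely $|\na^k\Rm|=O(v^{1+(k-3)/2}e^{-v})$ and $|\na_C^k[(\phi^{-1})^*g-g_C]|=O(t^{k-3}e^{-t^2/4})$. In particular $\partial g_{ij}$ decays like $e^{-t^2/4}$ in the asymptotic chart, so the ADM integral over $\{|x|=t\}$ tends to zero and $m=0$ directly from the definition. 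This bypasses the need to check the hypotheses of Dai--Ma entirely, and it also immediately gives $S\in L^1$. Your argument is a legitimate alternative, but it is more delicate and the obstacle you flag (verifying enough decay for both PMT and Dai--Ma) is real; the paper simply avoids it by using the stronger exponential estimate already on the shelf.

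Second, the identification of the cross-section. Your suggestion to invoke ``the orientability of $3$-manifolds'' is not valid: three-manifolds need not be orientable, and neither do their ends a priori. The paper argues as follows. The cone $C(X)$ is Ricci flat, so $X$ has constant Gauss curvature $1$; $X$ is connected because $S\geq 0$ forces $M$ to be connected at infinity. If $M$ is orientable, the level sets $\{f=-s\}$ (which are diffeomorphic to $X$) are orientable, so $X\cong\mathbb{S}^2(1)$ and $C(X)\cong\R^3\setminus\{0\}$; PMT rigidity finishes this case. If $M$ is non-orientable, pass to the orientable double cover $\pi:N\to M$; then $(N,\pi^*g,\pi^*f)$ is again an expander satisfying the hypotheses, so by the orientable case $N\cong\R^3$, hence $M$ is flat and diffeomorphic to $\R^3$, contradicting non-orientability. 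You should replace your orientability step with this double-cover argument.
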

\begin{proof}
By Theorem 1.3 in \cite{Deruelle-2017} and Lemma \ref{r2Rm to 0}, the curvature tensor $\Rm$ satisfies
\be \label{cone derivative estimates for Rm}
|\na^k \Rm|_g=O(v^{1+\frac{k-3}{2}}e^{-v}) \text{  as  } r \to \infty, \text{  for any nonnegative integer } k.
\ee
Furthermore, $M$ is smoothly asymptotic to a three dimensional cone at exponential rate \cite{Deruelle-2017}, i.e. there exist $R>0$, compact set $K$ in $M$, smooth closed surface $X$ and diffeomorphism $\phi:$ $M\setminus K$ $\rightarrow$ $C(X)\setminus \overline{B(o, R)}$ such that for any nonnegative integer $k$ 
\be\label{cone derivative estimates for metric}
|\na_C^k \big[(\phi^{-1})^*g-g_C\big]|_{g_C}(t,\omega)=O(t^{k-3}e^{-\frac{t^2}{4}}) \text{  as  } t \to \infty ;
\ee
\be\label{f in cone}
\frac{n}{2}-v\circ \phi^{-1}(t,\omega)=f\circ \phi^{-1}(t,\omega)=-\frac{t^2}{4}+c_0,
\ee
where $c_0$ is some constant. 
Hence the scalar curvature $S$ is integrable and nonnegative by Theorem \ref{sufficient condition for S geq 0}. Moreover, $(C(X), g_C)$ is Ricci flat \cite{Deruelle-2017} which implies that $X$ has constant Gauss curvature equal to $1$. By (\ref{f in cone}) or the proof of Theorem 3.2 in \cite{Deruelle-2017}, $X$ is diffeomorphic to the level sets $\{-f=s\}$ for all large $s$ (see also \cite{ChenDeruelle-2015}). Since $S\geq0$, using the results of Chen-Deruelle \cite{ChenDeruelle-2015} and Munteanu-Wang \cite{MunteanuWang-2012}, we see that $M$ is connected at infinity. Hence $X$ is connected.\\
\textbf{Case 1} $M$ is orientable.\\
The level sets of $f$ at infinity are orientable closed surfaces in $M$. Hence $X$ is also orientable and isometric to $\mathbb{S}^2(1)$. As a result, $C(X)$ is isometric to $\R^{3}\setminus \{0\}$. By (\ref{cone derivative estimates for Rm}) and (\ref{cone derivative estimates for metric}), $(M, g)$ is asymptotically flat and $\partial g_{ij}$ decay exponentially in $t=|x|$ with $x\in$ $\R^{3}$, in the standard coordinate of $\R^{3}\setminus \{0\}$, where $\partial$ denotes the partial derivative. The A.D.M. mass $m$ vanishes since
\be
m:=\frac{1}{32\pi}\lim_{t\to \infty} \int_{|x|=t}\sum^3_{i, j=1}\big(\frac{\partial g_{ij}}{\partial x_i}-\frac{\partial g_{ii}}{\partial x_j}\big)\frac{x_j}{t}d\sigma_t=0,
\ee
where $d\sigma_t$ is the volume element of the Euclidean sphere $\{|x|=t\}$ w.r.t. the Euclidean metric. By the rigidity case of the positive mass theorem, $M$ is isometric to $\R^3$ (see \cite{Schoen-1989}, \cite{Lohkamp-2016.0}, \cite{Lohkamp-2016}, \cite{SchoenYau-2017} and ref.therein).\\
\textbf{Case 2} $M$ is non-orientable.\\
We prove that it is impossible. Suppose on the contrary that $M$ is non-orientable. Then we consider $\pi: N \rightarrow M$, the orientable double cover of $M$. With $\pi^*g$ and $\pi^*f$, $N$ is endowed with the structure of a complete expanding Ricci soliton. Moreover, $S_N\geq 0$ and $\lim_{y\to\infty}r_N^2(y)S_N(y)=0$. By Case $1$, $N$ and thus $M$ are flat. $\na_g^2 f=-\frac{1}{2}g$ on $M$ and $M$ is diffeomorphic to $\R^3$, contradicting to the non-orientability of $M$.
\end{proof}

\section{Proof of Theorem \ref{non -ve scalar bdd preserved for conical data}}

We start with an estimate on the potential function $f$ under some growth conditions on $f$ and $S$.
\begin{lma}\label{quadratic estimates of v in exp under bdd S} Let $(M^n,g,f)$ be an $n$ dimensional complete noncompact gradient expanding Ricci soliton. If $f$ is proper, i.e. $\lim_{x\to \infty} f=-\infty$, and
\be\label{growth condition on S for the potential estimate}
\a:=\limsup_{x\to\infty} \frac{S}{v}<1,
\ee
where $v=\frac{n}{2}-f$, then for any $\delta$ $\in (0, 1-\a)$ there exists a positive constant $C$ such that
\be\label{quadratic estimate of potential the inequality}
\frac{\delta r^2}{4}-Cr-C\leq -f\leq \frac{r^2}{4}+Cr+C \text{  on  } M.
\ee
\end{lma}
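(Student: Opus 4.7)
The plan is to prove the two inequalities separately. The upper bound is essentially contained in the gradient estimate (\ref{naf nav sqrtv control linearly in exp}): squaring $\sqrt{v(x)}\leq \sqrt{v(p_0)}+r(x)/2$ and subtracting $n/2$ gives $-f(x)\leq r(x)^2/4+Cr(x)+C$ for a constant depending only on $v(p_0)$. The substance of the lemma is therefore the lower bound, which I will establish by running a backward gradient flow of $v$ from an arbitrary point lying outside a suitable large ball.

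Fix $\delta\in(0,1-\a)$. Combining the definition of $\a=\limsup S/v$ with the properness of $f$ (which forces $v\to\infty$ at infinity), choose $R_0>0$ so large that on $M\setminus B_{R_0}(p_0)$ both $S\leq (1-\delta)v$ and $v\geq n/\delta$ hold. Relation (\ref{eqn of nav}) then yields the quantitative bound
\bee
|\na v|^2 \;=\; v-\tfrac{n}{2}-S \;\geq\; \delta v-\tfrac{n}{2} \;\geq\; \tfrac{n}{2} \;>\; 0 \qquad\text{on }M\setminus B_{R_0}(p_0).
\eee
Now fix $x\in M\setminus B_{R_0}(p_0)$ and consider the unit-speed backward flow $\gamma'(\sigma)=-\na v(\gamma(\sigma))/|\na v(\gamma(\sigma))|$ with $\gamma(0)=x$, which is smooth wherever $|\na v|>0$. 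Let $T^*$ be the first time $\gamma$ exits $M\setminus B_{R_0}(p_0)$. Since $dv/d\sigma=-|\na v|\leq-\sqrt{n/2}$ along the flow while inside the annular region, $v(\gamma(\sigma))$ cannot remain $\geq n/\delta$ for all $\sigma$; hence $T^*<\infty$ and, by completeness of $M$, $\gamma(T^*)\in\p B_{R_0}(p_0)$.

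The sharper bound $dv/d\sigma\leq -\sqrt{\delta v-n/2}$, which stays valid throughout the flow since $v\geq n/\delta>n/(2\delta)$ in the annular region, permits separation of variables:
\bee
T^* \;\leq\; \int_{v(\gamma(T^*))}^{v(x)}\frac{dv}{\sqrt{\delta v-n/2}} \;\leq\; \frac{2}{\delta}\sqrt{\delta v(x)-\tfrac{n}{2}} \;\leq\; \frac{2}{\sqrt{\delta}}\sqrt{v(x)}.
\eee
The triangle inequality gives $r(x)\leq r(\gamma(T^*))+T^*\leq R_0+\tfrac{2}{\sqrt{\delta}}\sqrt{v(x)}$, and rearranging and squaring yields $v(x)\geq \tfrac{\delta}{4}(r(x)-R_0)^2$ whenever $r(x)\geq R_0$. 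Absorbing $R_0$ and $n/2$ into a single constant and handling the compact set $\overline{B_{R_0}(p_0)}$ by continuity of $-f$ produces the desired inequality. The main technical point, which I expect to be the principal obstacle, is arranging both $S\leq(1-\delta)v$ and $v\geq n/\delta$ on a single annular region: the joint hypotheses (proper $f$ and $\a<1$) are used precisely to secure these two coercive conditions simultaneously, after which the elementary gradient-flow argument closes the estimate.
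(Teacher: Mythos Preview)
Your proof is correct and follows essentially the same approach as the paper's: both establish the upper bound from (\ref{naf nav sqrtv control linearly in exp}) and obtain the lower bound by flowing along $\na v$ from a far-away point back to a fixed compact region, bounding the flow length via $|\na v|^2\geq \delta v-\text{const}$ and separating variables. The only cosmetic differences are that the paper parametrizes the flow by $\na v/|\na v|^2$ (so that $v$ changes at unit rate) and uses a level set $\{v=\rho\}$ as the inner region, whereas you use the unit-speed flow and a metric ball $B_{R_0}(p_0)$; neither choice affects the estimate.
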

\begin{remark}
 It can be seen from (\ref{general bdd for S in expander}) and (\ref{eqn of nav}) that when $f$ is proper, $\a$ is a real number in $[0,1]$.
\end{remark}
\begin{proof} The upper bound of (\ref{quadratic estimate of potential the inequality}) follows from (\ref{naf nav sqrtv control linearly in exp}) without any conditions on $f$ and $S$. For the lower bound, fix any $\delta$ $\in (0, 1-\a)$. We consider the flow $G_t$ of the vector field $\frac{\na v}{|\na v|^2}$ with $G_0$ be the identity map, where $v=\frac{n}{2}-f$. The flow makes sense since from (\ref{eqn of nav}) and (\ref{growth condition on S for the potential estimate}),
\begin{eqnarray}\notag
|\na v|^2 &=& v-\frac{n}{2}-S\\
\notag&=& v\Big(1-\frac{n}{2v}-\frac{S}{v}\Big)\\
\label{lower bdd for nav for estimates of -f}&\geq& \delta v\\
\notag&>& 1,
\end{eqnarray}
near infinity. Let $\rho$ be a large constant such that the above inequality holds on $\{v\geq \rho\}$ and $\{v= \rho\}$ is nonempty. It is not difficult to see that for all $q$ $\in\{v\geq \rho\}$, there are $t\geq 0$ and $z$ $\in\{v=\rho\}$  such that $G_t(z)=q$ and
\begin{eqnarray*}
v(q)-v(z)&=&\int_0^t\la\na v, \frac{\na v}{|\na v|^2}\ra(G_{\tau}(z)) d\tau\\
&=&t.
\end{eqnarray*}
Moreover, we have
\begin{eqnarray*}
d(q,z)&=&d(G_t(z), z)\\
&\leq& \int_0^t\frac{1}{|\na v|(G_{\tau}(z))}d\tau\\
&=& \int_0^t\frac{1}{\sqrt{v(G_{\tau}(z))-\frac{n}{2}-S}}  d\tau\\
&\leq& \int_0^t\frac{1}{\sqrt{\delta(\tau+\rho)}}  d\tau\\
&\leq& \frac{2}{\sqrt{\delta}}\sqrt{v(q)}.
\end{eqnarray*}
We used (\ref{lower bdd for nav for estimates of -f}) in the second last inequality. By triangular inequality,
\begin{eqnarray*}
r(q)&\leq& d(q,z) + \sup_{\{v=\rho\}}d(\cdot, p_0) \\
&\leq& \frac{2}{\sqrt{\delta}}\sqrt{v(q)}+ K_0,
\end{eqnarray*}
where $K_0=\sup_{\{v=\rho\}}d(\cdot, p_0)$.
Hence near infinity,
\be
v(q)\geq \frac{\delta r^2(q)}{4}-\frac{\delta K_0 r(q)}{2}+\frac{\delta K_0^2}{4}.
\ee
\end{proof}

Using the above lemmas, We prove a lower bound for the scalar curvature.
\begin{prop}\label{prop for norm sq Ric bdd by S and exp term}Let $(M^n, g, f)$ be a complete noncompact expanding gradient Ricci soliton with proper potential function $f$ and dimension $n\geq 2$. If
\be \label{S lower faster than quad}
\liminf_{x\to\infty} vS\geq 0,
\ee
then there exists a positive constant $C$ such that
\be\label{norm sq Ric bdd by S and exp term}
0\leq S+Cv^{1-\frac{n}{2}}e^{-v} \text{  on } M,
\ee
where $v=\frac{n}{2}-f$.
\end{prop}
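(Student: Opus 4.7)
The plan is to run a maximum principle argument on the barrier function
$$Q := vS + C\,e^{1/\sqrt{v}} v^{2-\frac{n}{2}} e^{-v},$$
where $C>0$ is to be chosen; establishing $Q\ge 0$ on $M$ is equivalent to the stated conclusion. The strategy closely parallels the proofs of Theorem~\ref{scalar lower bdd in expander} and Theorem~\ref{scalar lower bdd in expander when Ric <0}, but with sign adjustments reflecting that we neither assume $S\ge 0$ nor $\Ric\le 0$.

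First, one reduces to the non-flat case (if $\Ric\equiv -\tfrac{g}{2}$ then $S\equiv -\tfrac{n}{2}$, contradicting $\liminf vS\ge 0$ together with the properness of $f$). Lemma~\ref{v proper implies v>0} then gives $v>0$ on $M$, so the auxiliary term is smooth and positive. Since $f$ is proper, $v\to\infty$ at infinity, hence $e^{1/\sqrt{v}}v^{2-n/2}e^{-v}\to 0$, and the hypothesis $\liminf_{x\to\infty}vS\ge 0$ yields $\liminf_{x\to\infty}Q\ge 0$. I would next fix $R_0$ large enough that (i)~the asymptotic expansion \eqref{short form of long computation} holds on $M\setminus B_{R_0}(p_0)$, and (ii)~$vS\ge-\varepsilon_0$ there for a fixed small $\varepsilon_0>0$. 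By continuity and compactness, $v$ and the auxiliary term are bounded below by positive constants on $\bar B_{R_0}(p_0)$ while $|vS|$ is bounded, so $C$ may be taken large enough that $Q>0$ on $\bar B_{R_0}(p_0)$.

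The central step is the maximum-principle argument on the annulus $\Omega:=B_T(p_0)\setminus\bar B_{R_0}(p_0)$. For arbitrary $\varepsilon>0$ and $T$ sufficiently large, $Q\ge -\varepsilon$ on $\partial B_T(p_0)$. If the minimum of $Q$ over $\bar\Omega$ is attained on $\partial\Omega$ we are done; otherwise it is achieved at an interior point $z$, where $\Delta_{f+2\ln v}Q(z)\ge 0$ and $Q(z)<0$, forcing $S(z)<0$. Combining Lemma~\ref{compute lemma 1} with Lemma~\ref{compute lemma 2}, at $z$
$$0\le \Delta_{f+2\ln v}Q = -2v|\Ric|^{2} -2|\nabla\ln v|^{2}vS + C\,e^{1/\sqrt{v}}v^{2-\frac{n}{2}}e^{-v}\Bigl\{\tfrac{1}{2\sqrt{v}}(1+o(1)) - S(1+o(1))\Bigr\}.$$
Applying the Cauchy--Schwarz inequality $|\Ric|^2\ge S^2/n$ to handle the first term, using the identity $-2|\nabla\ln v|^2 vS = -2S + \frac{S(n+2S)}{v}$ from \eqref{eqn of nav}, and exploiting the a priori smallness $|S(z)|\le \varepsilon_0/v(z)$ furnished by the $\liminf$ hypothesis, I would reduce the inequality to a quantitative algebraic comparison between $-\tfrac{2vS^{2}}{n}$, the positive term $-2S$ (since $S<0$), and the positive $C\phi$ contributions. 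Choosing $C$ sufficiently large (depending only on $n$ and $\varepsilon_0$) forces the lower-order positive correction from $C\phi$ to dominate and thus contradicts $Q(z)<-\varepsilon$. Sending $\varepsilon\to 0$ and extending to $\bar B_{R_0}(p_0)$ by the choice of $C$ yields $Q\ge 0$ on $M$.

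The principal obstacle is precisely this quantitative comparison at $z$: the term $-2v|\Ric|^{2}$ carries no a priori upper bound on its magnitude in the absence of a sign constraint on $\Ric$, so unlike the proof of Theorem~\ref{scalar lower bdd in expander when Ric <0} (which uses $|\Ric|^2\le S^2$), we must make do with the reverse Cauchy--Schwarz bound $|\Ric|^2\ge S^2/n$. One needs to exploit the fact that the $\liminf$ hypothesis restricts $|S|$ to be as small as $O(1/v)$ near infinity, so that $vS^{2}$ is genuinely of lower order than $-2S$, in order for the positive contributions to outweigh the $|\Ric|^{2}$ term and close the maximum-principle argument.
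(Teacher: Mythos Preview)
Your maximum-principle scheme does not close, and the paper's proof takes a genuinely different route.

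The concrete failure is at the interior minimum $z$. From Lemma~\ref{compute lemma 1} you have $\Delta_{f+2\ln v}(vS)=-2v|\Ric|^2-2|\nabla\ln v|^2 vS$, and at $z$ (where $S(z)<0$) both $-2v|\Ric|^2\le 0$ and $-2|\nabla\ln v|^2vS\ge 0$. Discarding the nonpositive $-2v|\Ric|^2$ and using the asymptotics \eqref{short form of long computation} together with $|\nabla\ln v|^2\approx v^{-1}$ gives, roughly,
\[
0\le 2|S|+C\phi\Bigl(\tfrac{1}{2\sqrt{v}}+|S|\Bigr),
\]
and after isolating $S$ you only obtain $vS(z)\gtrsim -C\sqrt{v}\,\phi(z)$, which is \emph{weaker} than $vS(z)\ge -C\phi(z)$ and hence compatible with $Q(z)<0$; no contradiction arises. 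Your invocation of $|\Ric|^2\ge S^2/n$ goes the wrong way: it gives an \emph{upper} bound $-2v|\Ric|^2\le -\tfrac{2v}{n}S^2$, whereas a lower bound would be needed, and in any case the useful move is simply to drop $-2v|\Ric|^2\le 0$. The deeper structural issue is that on $\{S<0\}$ your barrier $\phi=e^{1/\sqrt{v}}v^{2-n/2}e^{-v}$ satisfies $\Delta_{f+2\ln v}\phi>0$ (it is a \emph{sub}solution), while $vS$ is not a supersolution; the comparison therefore has the wrong orientation.

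The paper instead works with $u:=-S$, observes that \eqref{eqn of S} yields $\Delta_f u\ge -u$ (discarding $2|\Ric|^2\ge 0$ outright rather than estimating it), and then repairs the residual defect $\Delta_{f+2\ln v}(uv)\ge -2|\nabla\ln v|^2uv\ge -\tfrac{2}{v}uv$ by multiplying by a factor $e^{-a/v}$ with $a\ge 4$: this makes $e^{-a/v}uv$ a genuine subsolution of the modified operator $\Delta_{f+2\ln v-2av^{-1}}$ on $\{u\ge 0\}$. A separate computation shows that for a suitable $b>0$ the function $e^{-b/v}v^{2-n/2}e^{-v}$ is a \emph{strict} supersolution of the same operator on $\{u\ge 0\}$ near infinity. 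With these two ingredients the maximum of $Q:=e^{-a/v}uv-\beta e^{-b/v}v^{2-n/2}e^{-v}$ cannot be interior and positive, and the boundary conditions finish the argument. The key differences from your proposal are thus the correction factor $e^{-a/v}$ (not $e^{1/\sqrt{v}}$), the modified drift $-2a\nabla v^{-1}$, and the fact that $|\Ric|^2$ is simply thrown away rather than controlled by Cauchy--Schwarz.
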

\begin{proof}


Let $u$ be $-S$. By (\ref{S lower faster than quad}), 
\be \label{boundary condition of uv}
\limsup_{x\to \infty}vu\leq 0.
\ee

From (\ref{eqn of S}), we see that
\be\label{elementary eqn of u}
\D u\geq -u.
\ee
(\ref{norm sq Ric bdd by S and exp term}) then follows from (\ref{boundary condition of uv}), (\ref{elementary eqn of u}) and the proof of Lemma 2.9 in \cite{Deruelle-2017}. We include the details for the convenience of readers. The calculations were essentially done by Deruelle \cite{Deruelle-2017}. By the properness of $f$, $\lim_{x\to\infty}v=\infty$ and $v\geq 1$ outside some compact subset of $M$. We claim that for any constant $a\geq 4$, there exist positive constants $R_0$ and $b$ such that
\be\label{eqn for expav uv}
\Delta_{f+2\ln v-2av^{-1}}(e^{-\frac{a}{v}}uv)\geq 0 \text{  on  } \{u\geq 0\}\setminus B_{R_0}(p_0) \text{  and }
\ee
\be\label{e-bv v e-v is superharmonic}
\Delta_{f+2\ln v-2av^{-1}}(e^{-\frac{b}{v}}v^{2-\frac{n}{2}}e^{-v})<0 \text{  on  } \{u\geq 0\}\setminus B_{R_0}(p_0).
\ee
We first assume the above inequalities and prove (\ref{norm sq Ric bdd by S and exp term}).
There exists a positive constant $\beta$ such that
\be\label{inner bdry condition of Q}
e^{-\frac{a}{v}}uv-\beta e^{-\frac{b}{v}}v^{2-\frac{n}{2}}e^{-v}<0 \text{  on  } \partial B_{R_0}(p_0).
\ee
Let $Q:=e^{-\frac{a}{v}}uv-\beta e^{-\frac{b}{v}}v^{2-\frac{n}{2}}e^{-v}$. From $\lim_{x\to\infty}v=\infty$ and (\ref{boundary condition of uv}),
it is evident that $\limsup_{x\to\infty}Q\leq 0$.
For any $y\in$ $M\setminus \overline{B_{R_0}(p_0)}$ and $\varepsilon >0$, there exists $T>R_0$ such that $y\in$ $B_T(p_0)$ and
\be\label{outer bdry condition of Q}
Q<\varepsilon \text{  on  } \partial B_{T}(p_0).
\ee
We consider the open set $\Omega:= B_{T}(p_0)\setminus \overline{B_{R_0}(p_0)}$. Suppose $Q$ attains its maximum over $\overline{\Omega}$ at $z\in$ $\overline{\Omega}$. If $z\in$ $\partial\Omega$, then by (\ref{inner bdry condition of Q}) and (\ref{outer bdry condition of Q}), $Q(y)\leq Q(z)\leq \varepsilon$. If $z\in$ $\Omega$, we show that $Q(z)\leq 0$. Suppose on the contrary that $Q(z)>0$. Then we know that $u(z)\geq 0$ and by (\ref{eqn for expav uv}) and (\ref{e-bv v e-v is superharmonic}) at $z$,
\begin{eqnarray*}
0&\geq& \Delta_{f+2\ln v-2av^{-1}}Q\\
&=&\Delta_{f+2\ln v-2av^{-1}}(e^{-\frac{a}{v}}uv)-\beta\Delta_{f+2\ln v-2av^{-1}}(e^{-\frac{b}{v}}v^{2-\frac{n}{2}}e^{-v})\\
&\geq&-\beta\Delta_{f+2\ln v-2av^{-1}}(e^{-\frac{b}{v}}v^{2-\frac{n}{2}}e^{-v})\\
&>& 0,
\end{eqnarray*}
which is impossible. Hence, $Q(z)\leq 0$. In either cases, $Q(y)\leq Q(z)\leq \varepsilon$. We then obtain (\ref{norm sq Ric bdd by S and exp term}) by letting $\varepsilon\to 0$. It remains to prove (\ref{eqn for expav uv}) and (\ref{e-bv v e-v is superharmonic}). For (\ref{eqn for expav uv}), by (\ref{eqn of v}) and (\ref{elementary eqn of u}), we have

\begin{eqnarray*}
\D (uv)&=&v\D u+ u\D v +2\la \na u, \na v\ra\\
&\geq&-uv+uv+2\la \na \ln v, \na(vu)\ra -2|\na \ln v|^2uv;
\end{eqnarray*}
\be\label{partial ineq for uv in exp}
\Delta_{f+2\ln v} (uv)\geq-2|\na\ln v|^2uv.
\ee
For any $a>0$, we compute directly using (\ref{eqn of v}) as in Lemma 2.9 of \cite{Deruelle-2017} to get
\be\label{eqn for expav}
\Delta_{f+2\ln v}e^{-\frac{a}{v}}=\frac{a}{v}e^{-\frac{a}{v}}+\big(-\frac{4a}{v^3}+\frac{a^2}{v^4}\big)|\na v|^2e^{-\frac{a}{v}}.
\ee
A straightforward calculation using (\ref{partial ineq for uv in exp}) and (\ref{eqn for expav}) yields
\begin{eqnarray*}
\Delta_{f+2\ln v}(e^{-\frac{a}{v}}uv)&=&e^{-\frac{a}{v}}\Delta_{f+2\ln v} (uv)+uv\Delta_{f+2\ln v}e^{-\frac{a}{v}}+ 2\la\na e^{-\frac{a}{v}}, \na (uv) \ra\\
&\geq&-2|\na\ln v|^2e^{-\frac{a}{v}}uv+\frac{a}{v}e^{-\frac{a}{v}}uv+\big(-\frac{4a}{v^3}+\frac{a^2}{v^4}\big)|\na v|^2e^{-\frac{a}{v}}uv\\
&&+ 2\la\na e^{-\frac{a}{v}}, \na (uv) \ra\\
&=&\big(\frac{a}{v}-2|\na\ln v|^2\big)e^{-\frac{a}{v}}uv+\big(-\frac{4a}{v^3}+\frac{a^2}{v^4}\big)|\na v|^2e^{-\frac{a}{v}}uv\\
&&-2a\la \na v^{-1},\na (e^{-\frac{a}{v}}uv)\ra-\frac{2a^2}{v^4}|\na v|^2e^{-\frac{a}{v}}uv\\
&=&\big(\frac{a}{v}-2|\na\ln v|^2\big)e^{-\frac{a}{v}}uv-\big(\frac{4a}{v^3}+\frac{a^2}{v^4}\big)|\na v|^2e^{-\frac{a}{v}}uv\\
&&-2a\la \na v^{-1},\na (e^{-\frac{a}{v}}uv)\ra.
\end{eqnarray*}
Hence we have

$$\Delta_{f+2\ln v-2av^{-1}}(e^{-\frac{a}{v}}uv)\geq\big(\frac{a}{v}-2|\na\ln v|^2\big)e^{-\frac{a}{v}}uv-\big(\frac{4a}{v^3}+\frac{a^2}{v^4}\big)|\na v|^2e^{-\frac{a}{v}}uv.$$
On the set where $u$ is nonnegative and for $a\geq 4$, we may simplify the above inequality by $|\na v|^2\leq v$
\begin{eqnarray*}
\Delta_{f+2\ln v-2av^{-1}}(e^{-\frac{a}{v}}uv)&\geq&\frac{a-2}{v}e^{-\frac{a}{v}}uv-\big(\frac{4a}{v^2}+\frac{a^2}{v^3}\big)e^{-\frac{a}{v}}uv\\
&\geq&\frac{1}{v}e^{-\frac{a}{v}}uv\\
&\geq& 0
\end{eqnarray*}
near infinity. We proved the inequality (\ref{eqn for expav uv}). For (\ref{e-bv v e-v is superharmonic}), by $|\na v|^2\leq v$, (\ref{eqn of nav}), (\ref{general bdd for S in expander}) and previous computation (\ref{half eq for e-v}),
\begin{eqnarray*}
\Delta_{f+2\ln v} (v^{2-\frac{n}{2}}e^{-v})&=& v^{2-\frac{n}{2}}e^{-v}\Big[ -S + (\frac{n}{2}-2)(\frac{n}{2}+1)|\na \ln v|^2\\
&&\qquad\qquad-(2S+n)(\frac{n}{2}-1)v^{-1}\Big].\\
&\leq& v^{2-\frac{n}{2}}e^{-v}\Big[ -S + |\frac{n}{2}-2|(\frac{n}{2}+1)v^{-1}\Big].
\end{eqnarray*}
\begin{eqnarray*}
2a\la \na v^{-1}, \na  (v^{2-\frac{n}{2}}e^{-v})\ra &=& a(n-4)\frac{|\na \ln v|^2}{v}v^{2-\frac{n}{2}}e^{-v}+2a|\na \ln v|^2v^{2-\frac{n}{2}}e^{-v}\\
&\leq&v^{2-\frac{n}{2}}e^{-v}\Big[\frac{a|n-4|}{v^{2}}+\frac{2a}{v}\Big].
\end{eqnarray*}
Hence
\begin{eqnarray*}
\Delta_{f+2\ln v-2av^{-1}} (v^{2-\frac{n}{2}}e^{-v})&\leq&v^{2-\frac{n}{2}}e^{-v}\Big[-S+\frac{C_0}{v}\Big],
\end{eqnarray*}
where $C_0=C_0(a, n)$ is a positive constant depending on $a$ and $n$. Using (\ref{S lower faster than quad}), we have $vS\geq -1$ near infinity. We can rewrite the above inequality as
\be\label{ineq for ve-v}
\Delta_{f+2\ln v-2av^{-1}}(v^{2-\frac{n}{2}}e^{-v})\leq C_1v^{1-\frac{n}{2}}e^{-v},
\ee
where $C_1=C_1(a, n)$ is a positive constant depending on $a$ and $n$. For any $b>0$,
\be
2a\la \na v^{-1}, \na e^{-\frac{b}{v}}\ra=-2ab|\na \ln v|^2v^{-2}e^{-\frac{b}{v}}.
\ee
By (\ref{eqn for expav}) and $|\na v|^2\leq v$,
\begin{eqnarray}\notag
\Delta_{f+2\ln v-2av^{-1}}e^{-\frac{b}{v}}&=&e^{-\frac{b}{v}}\Big[\frac{b}{v}+\Big(-\frac{4b}{v^3}+\frac{b^2}{v^4}\Big)|\na v|^2-2ab|\na\ln v|^2v^{-2}\Big]\\
\label{ineq for e-bv}&\leq&e^{-\frac{b}{v}}\Big[\frac{b}{v}+\frac{b^2}{v^4}|\na v|^2\Big]\\
\notag&\leq&e^{-\frac{b}{v}}\Big[\frac{b}{v}+\frac{b^2}{v^3}\Big].
\end{eqnarray}
It follows from (\ref{eqn of nav}) that
\begin{eqnarray}\notag
2\la \na(v^{2-\frac{n}{2}}e^{-v}), \na e^{-\frac{b}{v}}\ra &=& 2bv^{-2}e^{-\frac{b}{v}}|\na v|^2(2-\frac{n}{2})v^{1-\frac{n}{2}}e^{-v}\\
\notag&&+2bv^{-2}e^{-\frac{b}{v}}|\na v|^2(-e^{-v})v^{2-\frac{n}{2}}\\
\label{cross term v-ve e-bv}&=&2bv^{-2}e^{-\frac{b}{v}}|\na v|^2(2-\frac{n}{2})v^{1-\frac{n}{2}}e^{-v}\\
\notag&&-\frac{2b}{v}e^{-\frac{b}{v}}e^{-v}v^{2-\frac{n}{2}}+\frac{b(2S+n)}{v^2}e^{-\frac{b}{v}}e^{-v}v^{2-\frac{n}{2}}\\
\notag&\leq&-\frac{2b}{v}e^{-\frac{b}{v}}e^{-v}v^{2-\frac{n}{2}}+\frac{b(2S+n+|n-4|)}{v^2}e^{-\frac{b}{v}}e^{-v}v^{2-\frac{n}{2}}.
\end{eqnarray}
Using (\ref{ineq for ve-v}), (\ref{ineq for e-bv}) and (\ref{cross term v-ve e-bv}), on the set where $u$ is nonnegative (i.e. $S\leq 0$),
\begin{eqnarray*}
\Delta_{f+2\ln v-2av^{-1}}(e^{-\frac{b}{v}}v^{2-\frac{n}{2}}e^{-v})&=&e^{-\frac{b}{v}}\Delta_{f+2\ln v-2av^{-1}}(v^{2-\frac{n}{2}}e^{-v})\\
&&+v^{2-\frac{n}{2}}e^{-v}\Delta_{f+2\ln v-2av^{-1}}e^{-\frac{b}{v}}\\
&&+2\la \na(v^{2-\frac{n}{2}}e^{-v}), \na e^{-\frac{b}{v}}\ra\\
&\leq&\frac{C_1}{v}e^{-\frac{b}{v}}v^{2-\frac{n}{2}}e^{-v}+\Big[\frac{b}{v}+\frac{b^2}{v^3}\Big]e^{-\frac{b}{v}}v^{2-\frac{n}{2}}e^{-v}\\
&&-\frac{2b}{v}e^{-\frac{b}{v}}e^{-v}v^{2-\frac{n}{2}}+\frac{b(2S+n+|n-4|)}{v^2}e^{-\frac{b}{v}}e^{-v}v^{2-\frac{n}{2}}\\
&\leq&e^{-\frac{b}{v}}v^{2-\frac{n}{2}}e^{-v}\Big[\frac{C_1-b}{v}+\frac{C_2}{v^2}\Big],
\end{eqnarray*}
where $C_2=C_2(b, n)$ is positive and depends on $b$ and $n$. We choose $b=C_1+2$, then

\begin{eqnarray*}
\Delta_{f+2\ln v-2av^{-1}}(e^{-\frac{b}{v}}v^{2-\frac{n}{2}}e^{-v})&\leq& e^{-\frac{b}{v}}v^{2-\frac{n}{2}}e^{-v}\Big[-\frac{2}{v}+\frac{C_2}{v^2}\Big]\\
&\leq&-\frac{1}{v}e^{-\frac{b}{v}}v^{2-\frac{n}{2}}e^{-v}\\
&<&0
\end{eqnarray*}

near infinity. We showed (\ref{e-bv v e-v is superharmonic}) and completed the proof of the proposition.
\end{proof}

With the above preparation, we give a proof for Theorem \ref{non -ve scalar bdd preserved for conical data}.

\begin{proof}[\textbf{Proof of Theorem \ref{non -ve scalar bdd preserved for conical data}:}] From (\ref{naf nav sqrtv control linearly in exp}), we see that (\ref{S lower faster than quad}) holds. By Proposition \ref{prop for norm sq Ric bdd by S and exp term}, the negative part of the scalar curvature $S_-$ satisfies
$$0\leq S_-\leq Cv^{1-\frac{n}{2}}e^{-v} \text{  on  } M,$$
for some constant $C>0$. Using Lemma \ref{quadratic estimates of v in exp under bdd S}, we have $v\sim r^2$. Hence there exists $\delta>0$ such that
\be
S_-\leq Ce^{-\delta r^2}.
\ee
By (\ref{naf nav sqrtv control linearly in exp}) and Theorem $1.1$ in \cite{MunteanuWang-2014}, there exists a positive constant $C$ such that for all $R>0$
$$Vol_g(B_R(p_0))\leq Ce^{\sqrt{n-1}R},$$
where $Vol_g(B_R(p_0))$ is the volume of the geodesic ball $B_R(p_0)$. It implies that
$$\int_M S_-<\infty.$$
The nonnegativity of the scalar curvature is a consequence of $\liminf_{x\to \infty} r^2S\geq0$ and Theorem \ref{sufficient condition for S geq 0}. Connectedness at infinity of $M$ now follows from \cite{ChenDeruelle-2015} and  \cite{MunteanuWang-2012}.
\end{proof}

\section{Proof of theorem \ref{bdd S and f proper implies bdd Rm}}
It was observed by Munteanu-Wang \cite{MunteanuWang-2015.3} that in four dimensional gradient Ricci soliton, the Riemann curvature $Rm$ can be bounded by $\Ric$ and $\na \Ric$:
\begin{lma}\label{Rm to Rc}\cite{MunteanuWang-2015.3} Let $(M^4,g,f)$ be a four dimensional gradient Ricci soliton. There exists a universal positive constant $A_0$ such that if $\na f \neq 0$ at $q$ $\in M$, then at $q$
\be\label{Rm controlled by Rc}
|\Rm|\leq A_0\Big(|\Ric|+\frac{|\na \Ric|}{|\na f|}\Big).
\ee
\end{lma}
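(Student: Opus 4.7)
The claim is purely pointwise and algebraic, so the plan is to work at the single point $q$ where $\na f\neq 0$, choose an orthonormal frame $\{e_1,e_2,e_3,e_4\}$ with $e_4=\na f/|\na f|$, and bound each component $R_{ijkl}$ in this frame by a universal multiple of $|\Ric|+|\na\Ric|/|\na f|$. I will split the components into two types according to how many of the indices equal $4$.

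For the components with at least one index equal to $4$, I will use the soliton identity \eqref{divRm}, namely $R_{ijkl}f^l=R_{kj,i}-R_{ki,j}$. In the chosen frame $f^l=|\na f|\,\delta^l_{\,4}$, so contracting with $l$ yields
$$R_{ijk4}\;=\;\frac{R_{kj,i}-R_{ki,j}}{|\na f|}\qquad(i,j,k\in\{1,\ldots,4\}),$$
hence $|R_{ijk4}|\leq 2|\na\Ric|/|\na f|$. By the pair symmetry $R_{ijkl}=R_{klij}$ and the antisymmetries $R_{ijkl}=-R_{jikl}=-R_{ijlk}$, every nonzero component of $\Rm$ with at least one index equal to $4$ is (up to a sign) of the form $R_{ijk4}$. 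In particular this covers the ``two-fours'' components $R_{a4b4}$ with $a,b\in\{1,2,3\}$, which obey $|R_{a4b4}|\leq 2|\na\Ric|/|\na f|$.

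For the remaining ``tangential'' components $R_{abcd}$ with $a,b,c,d\in\{1,2,3\}$, the key observation is that they constitute an algebraic curvature tensor on the three-dimensional subspace $\mathrm{span}\{e_1,e_2,e_3\}$ (they inherit the two antisymmetries, the pair symmetry and the first Bianchi identity). Since the Weyl tensor of any three-dimensional algebraic curvature tensor vanishes, such a tensor is determined algebraically by its own Ricci trace $\widetilde R_{ab}:=\sum_{c=1}^{3}R_{acbc}$ via
$$R_{abcd}=g_{ac}\widetilde R_{bd}-g_{ad}\widetilde R_{bc}+g_{bd}\widetilde R_{ac}-g_{bc}\widetilde R_{ad}-\tfrac{\widetilde S}{2}(g_{ac}g_{bd}-g_{ad}g_{bc}),$$
where $\widetilde S=\sum_a \widetilde R_{aa}$. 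Writing $\widetilde R_{ab}=R_{ab}-R_{a4b4}$ and invoking the bound on $|R_{a4b4}|$ from the previous step gives $|\widetilde R|+|\widetilde S|\leq C\bigl(|\Ric|+|\na\Ric|/|\na f|\bigr)$, and therefore
$$|R_{abcd}|\leq C\bigl(|\Ric|+|\na\Ric|/|\na f|\bigr).$$

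Combining the two index types produces the desired pointwise inequality with a universal (dimension-only) constant $A_0$. The only conceptual point is the recognition that the ``tangential'' components $R_{abcd}$ form a three-dimensional algebraic curvature tensor, so that the vanishing of the Weyl tensor in dimension three eliminates any component not already caught by the soliton identity; once that is seen, the proof reduces to the index bookkeeping above and presents no analytic difficulty.
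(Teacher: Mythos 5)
Your argument is correct, and it is essentially the standard proof of this lemma: the paper itself does not prove it but defers to \cite{MunteanuWang-2015.3} (and \cite{Chan-2019}), and the argument given there is exactly yours --- use the soliton identity (\ref{divRm}) in a frame with $e_4=\na f/|\na f|$ to control every component with an index in the $\na f$ direction by $|\na\Ric|/|\na f|$, then observe that the purely tangential components form a three-dimensional algebraic curvature tensor, hence are determined by their trace $\widetilde R_{ab}=R_{ab}-R_{a4b4}$, which is controlled by $|\Ric|$ plus the already-bounded terms. No gaps; the constant is universal as claimed.
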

Please see \cite{MunteanuWang-2015.3} and \cite{Chan-2019} for a proof of Lemma \ref{Rm to Rc}. To prove that the curvature tensor is bounded, we first give estimate on the Ricci tensor.
\begin{thm} \label{bdd S and f proper implies bdd Ric}Let $(M^4, g, f)$ be a $4$ dimensional complete noncompact expanding gradient Ricci soliton. Suppose that it has bounded scalar curvature and $f\to -\infty$ as $x\to \infty$. Then the Ricci tensor $\Ric$ is bounded.
\end{thm}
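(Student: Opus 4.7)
My plan is to adapt the maximum-principle strategy from the three-dimensional case (Theorem \ref{no sign bdd S implies bdd Rm in exp}), using Lemma \ref{Rm to Rc} to substitute for the three-dimensional identity $|\Rm|\leq c|\Ric|$. From the Bochner-type identity (\ref{ineq for norm of RC sq in exp}), Kato's inequality, and Lemma \ref{Rm to Rc} (applicable on $M\setminus K$ for some large compact set $K$, since $|\na f|^2 = -f - S \to\infty$ by the properness of $f$), one has, wherever $|\Ric|>0$,
\[
\D|\Ric| \;\geq\; -|\Ric| - c|\Rm||\Ric| \;\geq\; -|\Ric| - cA_0|\Ric|^2 - cA_0\,\frac{|\na\Ric|\,|\Ric|}{|\na f|}.
\]
Exactly as in the 3D proof, I would set $u := |\Ric| - AS$ for a large constant $A$, so that the expander equation $\D S = -S - 2|\Ric|^2$ gives the key quadratic gain $2A|\Ric|^2$ in $\D u$, and run the maximum principle on $G := \phi^2 u$ with $\phi = \psi(r/R)$ as in (\ref{gradient of cut off in exp 3 dim})--(\ref{ineq for D cut off in exp 3 dim}).

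At an interior maximum $q$ of $G$, the new obstruction (not present in dimension three) is the gradient cross term $cA_0|\na\Ric||\Ric|/|\na f|$ coming from Lemma \ref{Rm to Rc}. I plan to handle it via Young's inequality $cA_0|\na\Ric||\Ric|/|\na f| \leq \varepsilon |\na\Ric|^2 + C_\varepsilon |\Ric|^4/|\na f|^2$, with the $\varepsilon|\na\Ric|^2$ piece absorbed into the $2|\na\Ric|^2$ available in the $\D|\Ric|^2$ identity (reformulating the maximum principle in terms of $|\Ric|^2$ if convenient). The bounded-$S$ hypothesis verifies $\limsup_{x\to\infty} S/v < 1$, so Lemma \ref{quadratic estimates of v in exp under bdd S} yields $|\na f|^2 \geq c\,r^2$ outside a compact set, converting the residual to $C|\Ric|^4/r^2$; combined with a preliminary polynomial growth estimate for $|\Ric|$ (obtained from a coarser first run of the same maximum principle), this residual becomes a lower-order perturbation and the argument closes, yielding $|\Ric|\leq C$ on $M$.

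The main obstacle is precisely this residual $|\Ric|^4/|\na f|^2$ (respectively $|\Ric|^3/|\na f|^2$ depending on the chosen exponent), which is superquadratic in $|\Ric|$ and therefore cannot be treated as a negligible perturbation without some a priori control of $|\Ric|$ relative to $|\na f|$. To break this potential circularity, an alternative and perhaps more robust route is a blow-up argument modelled on Theorem \ref{n dim S control Ric if Rm bdd exp}: assuming $Q_k := |\Ric|(y_k) \to\infty$ along some $y_k\to\infty$, rescale to $g_k := Q_k g$, so that $|\Ric|_{g_k}(y_k)=1$ and $S_{g_k} = Q_k^{-1}S \to 0$; use Lemma \ref{Rm to Rc} together with Shi's local derivative estimate on the rescaled self-similar Ricci flow to bootstrap the $|\Ric|$-bound to an $|\Rm|$-bound on shrinking-but-expanding-at-the-rescaled-scale balls, provided $|\na f|_{g_k}(y_k)$ is bounded below, which follows from $Q_k \lesssim |f(y_k)|$ and can be arranged by Hamilton-style point-picking together with $|\na f|^2\sim r^2$; then extract a Cheeger--Gromov--Hamilton subsequential limit ancient Ricci flow $(M_\infty,g_\infty(t),y_\infty)$ with $|\Ric|_{g_\infty}(y_\infty,0)=1$, derive $S_{g_\infty}\equiv 0$ via the distance-propagation argument of Theorem \ref{n dim S control Ric if Rm bdd exp} using Lemma \ref{dist estimate along grad flow in exp}, and conclude via $(\partial_t - \Delta)S = 2|\Ric|^2$ that $|\Ric|_{g_\infty}\equiv 0$, a contradiction.
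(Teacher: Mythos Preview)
Both of your proposed routes contain a genuine gap, and neither coincides with the paper's argument.

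\medskip
\textbf{On the first route.} You correctly isolate the obstruction: after Young's inequality the residual is $C|\Ric|^4/|\na f|^2$, which is \emph{quartic} in $|\Ric|$, whereas the gain produced by the auxiliary term $-AS$ via $\D(-AS)=A(S+2|\Ric|^2)$ is only \emph{quadratic}, namely $+2A|\Ric|^2$. At an interior maximum the inequality therefore never closes, regardless of how large $A$ is. Your proposed fix --- a ``preliminary polynomial growth estimate for $|\Ric|$ from a coarser first run'' --- does not break the circularity: any such first run faces the identical quartic term, and in fact what one would need is the linear-in-$r$ bound $|\Ric|\le c|\na f|$, which is essentially equivalent to the conclusion. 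The missing idea (which is what the paper uses, following Munteanu--Wang) is to replace the \emph{additive} correction $-AS$ by a \emph{multiplicative} weight: set $w:=(S+3L)^{-a}|\Ric|^2$ with $L=\sup_M|S|$. Then the term $|\Ric|^2\,\D\big((S+3L)^{-a}\big)$ contributes, via $\D S=-S-2|\Ric|^2$, a \emph{positive quartic} piece $+2a(S+3L)^{-a-1}|\Ric|^4$, which for $|\na f|$ large dominates the bad $4A_0^2(S+3L)^{-a}|\Ric|^4/|\na f|^2$. The choice $a=\tfrac13$ also makes the $|\na S|^2$ terms arising from the cross term and from $\D F$ cancel. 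This is the device you are missing.

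\medskip
\textbf{On the blow-up route.} Hamilton's compactness requires uniform bounds on $|\Rm|_{g_k}$, not merely on $|\Ric|_{g_k}$. Your plan to upgrade an $|\Ric|$-bound to an $|\Rm|$-bound via Lemma~\ref{Rm to Rc} and Shi's estimate is circular: Shi's local estimate for $|\na\Ric|$ already presupposes a bound on $|\Rm|$. If instead you point-pick on $|\Rm|$ to secure compactness, the limit is a Ricci flow with $S\equiv 0$, hence Ricci-flat by the evolution equation for $S$; but in dimension four Ricci-flat does not imply flat, so $|\Rm|_{g_\infty}(y_\infty)=1$ yields no contradiction. Thus the contradiction-by-blow-up argument, which worked in Theorem~\ref{n dim S control Ric if Rm bdd exp} \emph{because} $|\Rm|$ was assumed bounded a priori, does not go through here.
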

\begin{proof}
Let $L:=\sup_M |S|<\infty$. If $L=0$, then by (\ref{eqn of S}) $M$ is Ricci flat and we are done with the proof. We may assume $L>0$ and introduce the function $F(S):=(S+3L)^{-a}>0$, where $a>0$. We see that
\be\label{C0 estimate of F}
\frac{1}{4^aL^a}\leq F\leq \frac{1}{2^aL^a},
\ee
and
\be\label{eqn for na F}
\na F=-a(S+3L)^{-a-1}\na S.
\ee
By (\ref{eqn of S})
\begin{eqnarray}\notag
\D F&=&-a(S+3L)^{-a-1}\D S + a(a+1)(S+3L)^{-a-2}|\na S|^2\\
\label{eqn for lap F}&=&a(S+3L)^{-a-1}(S+2|\Ric|^2)+a(a+1)(S+3L)^{-a-2}|\na S|^2.
\end{eqnarray}
Whenever $\na f\neq 0$, we compute directly using (\ref{eqn of Rc in exp}) and get
\begin{eqnarray}
\notag\D |\Ric|^2&=& 2|\na \Ric|^2+2\la \Ric, \D\Ric\ra\\
\notag&\geq&2|\na \Ric|^2-2|\Ric|^2-4|\Ric|^2|\Rm|\\
\label{ineq for Ric norm sq in dim 4}&\geq&2|\na \Ric|^2-2|\Ric|^2-4A_0|\Ric|^3-4A_0|\Ric|^2\frac{|\na \Ric|}{|\na f|}\\
\notag&\geq&|\na \Ric|^2-2|\Ric|^2-4A_0|\Ric|^3-\frac{4A_0^2|\Ric|^4}{|\na f|^2}.
\end{eqnarray}
It is not difficult to see from Cauchy Schwarz inequality, (\ref{eqn for lap F}), (\ref{ineq for Ric norm sq in dim 4}) and (\ref{C0 estimate of F}) that
\begin{eqnarray*}
\D (F |\Ric|^2)&=& F\D |\Ric|^2+2\la \na F, \na |\Ric|^2\ra +|\Ric|^2\D F\\
&\geq&F\D |\Ric|^2-4|F'||\na S||\Ric||\na\Ric|+|\Ric|^2\D F\\
&\geq&F\D |\Ric|^2-F|\na \Ric|^2-4\frac{(F')^2}{F}|\na S|^2|\Ric|^2\\
&&+|\Ric|^2\D F\\
&\geq&F\D |\Ric|^2-F|\na \Ric|^2-4\frac{(F')^2}{F}|\na S|^2|\Ric|^2\\
&&+a(S+3L)^{-a-1}(S+2|\Ric|^2)|\Ric|^2\\
&&+a(a+1)(S+3L)^{-a-2}|\na S|^2|\Ric|^2\\
&=&F\D |\Ric|^2-F|\na \Ric|^2-4a^2(S+3L)^{-a-2}|\na S|^2|\Ric|^2\\
&&+a(S+3L)^{-a-1}(S+2|\Ric|^2)|\Ric|^2\\
&&+a(a+1)(S+3L)^{-a-2}|\na S|^2|\Ric|^2\\
&\geq&-\big(2+\frac{a}{2}\big)F|\Ric|^2-4A_0F|\Ric|^3-\frac{4A_0^2F|\Ric|^4}{|\na f|^2}\\
&&+2a(S+3L)^{a-1}F^2|\Ric|^4\\
&&+a(1-3a)(S+3L)^{-a-2}|\na S|^2|\Ric|^2.
\end{eqnarray*}
We may take $a=\frac{1}{3}$ and consider the function $w:=F|\Ric|^2$, then the above inequalities can be rewritten as
\begin{eqnarray*}
\D w\geq \Big(\frac{2}{3}F^2-\frac{4A_0^2}{F|\na f|^2}\Big)w^2-4A_0F^{-\frac{1}{2}}w^{\frac{3}{2}}-\frac{13}{6}w.
\end{eqnarray*}
By (\ref{eqn of naf}), $|\na f|^2=-f-S\geq -f-L\to \infty$. Using (\ref{C0 estimate of F}), we have outside a compact set,
\begin{eqnarray*}
\D w&\geq& \Big(\frac{1}{2^{\frac{1}{3}}3L^{\frac{2}{3}}}-\frac{4^{\frac{4}{3}}A_0^2L^{\frac{1}{3}}}{|\na f|^2}\Big)w^2-4A_0F^{-\frac{1}{2}}w^{\frac{3}{2}}-\frac{13}{6}w\\
&\geq&\frac{1}{6L^{\frac{2}{3}}}w^2-4A_0F^{-\frac{1}{2}}w^{\frac{3}{2}}-\frac{13}{6}w.
\end{eqnarray*}
It is easy to see that the differential inequality of $w$ resembles the one of $u$ in (\ref{reduced ineq for norm of RC sq in exp}). Hence a similar cut off function argument as in the proof of Theorem \ref{curv estimate in dim 3 exp}\ref{3 dim S control Rm exp} and the Laplacian comparison theorem (\ref{lap comp in exp}) (see \cite{WeiWylie-2009}) yields the boundedness of $w$.
\end{proof}
Theorem \ref{bdd S and f proper implies bdd Rm} is now a consequence of the following theorem. The proof of the theorem is essentially due to Munteanu-Wang \cite{MunteanuWang-2015.3} and Cao-Cui \cite{CaoCui-2014}. We shall only give a sketch of the proof.
\begin{thm} \label{bdd Ric and f proper implies bdd Rm}Let $(M^4, g, f)$ be a $4$ dimensional complete noncompact expanding gradient Ricci soliton. Suppose that it has bounded Ricci curvature and $f\to -\infty$ as $x\to \infty$. Then the curvature tensor $\Rm$ is bounded
\end{thm}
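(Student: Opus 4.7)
The strategy, following Munteanu-Wang \cite{MunteanuWang-2015.3} for four-dimensional shrinkers and Cao-Cui \cite{CaoCui-2014} for steady solitons, is to control $|\na\Ric|/|\na f|$ at infinity and then invoke Lemma \ref{Rm to Rc}. Since $|\Ric|$ is bounded by hypothesis, the first term on the right of \eqref{Rm controlled by Rc} is harmless; and by \eqref{eqn of naf} together with the boundedness of $S$, one has $|\na f|^2=-f-S\to\infty$ as $r\to\infty$, so the zero set of $\na f$ is contained in a compact set $K\subset M$ and $|\na f|^2$ is comparable to $\tau:=\alpha-f$ on $M\setminus K$, where $\alpha>0$ is chosen so that $\tau\geq 1$ on $M$. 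On $K$ the curvature tensor is automatically bounded by smoothness, so all the work is at infinity.

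First I would derive a Bochner-type inequality for $|\na\Ric|^2$. Differentiating \eqref{eqn of Rc in exp} once more and applying the commutator $[\na,\D]$, which for a gradient expander generates terms of the form $\Ric\ast\na\Ric$ and $\na f\ast\Rm\ast\Ric$ coming from the soliton identity $\na^2f = -\Ric-g/2$, and then absorbing $|\Ric|$-factors by the $L^{\infty}$ bound via Cauchy-Schwarz, one arrives at
\begin{equation*}
\D|\na\Ric|^2 \;\geq\; 2|\na^2\Ric|^2 - C_1|\na\Ric|^2 - C_2|\Rm||\na\Ric|^2 - C_3|\na f|^2.
\end{equation*}
Feeding Lemma \ref{Rm to Rc} into the $|\Rm|$ factor and again using $|\Ric|\leq L$ converts this to
\begin{equation*}
\D|\na\Ric|^2 \;\geq\; -C_4|\na\Ric|^2 - C_5\,\frac{|\na\Ric|^3}{|\na f|} - C_3|\na f|^2.
\end{equation*}

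Next, following Cao-Cui, I would apply the maximum principle to a weighted auxiliary function of the form $G:=\phi^2|\na\Ric|^2/\tau^a$, where $\phi(x)=\psi(r(x)/R)$ is the distance-based cutoff already used in the proof of Theorem \ref{curv estimate in dim 3 exp}\ref{3 dim S control Rm exp} and $a>0$ is tuned so that, using $|\na f|^2\sim\tau$ and $\D\tau=\tau+O(1)$, the cubic term $|\na\Ric|^3/|\na f|$ and the forcing term $|\na f|^2$ become, after division by $\tau^a$, respectively of order $G^{3/2}\tau^{(3a-1)/2}\tau^{-a}$ and $\tau^{1-a}$; the Kato improvement $2|\na^2\Ric|^2\geq 2|\na|\na\Ric||^2$ combined with $\na G=0$ at an interior maximum produces the coercive $+G^2$-term needed to close the argument. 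The Wei-Wylie comparison $\D r\leq r/2+\beta$ in \eqref{lap comp in exp} controls the cutoff contributions exactly as before, so letting $R\to\infty$ yields a uniform bound $G\leq C$. From this $|\na\Ric|/|\na f|\leq C$ on $M\setminus K$, and then Lemma \ref{Rm to Rc} gives $|\Rm|\leq C$ on $M\setminus K$, hence on all of $M$.

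The main obstacle is the algebraic calibration of $a$: the Bochner estimate furnishes only \emph{negative} curvature terms together with a forcing $|\na f|^2$ that itself diverges, so without an exponent $a$ that simultaneously balances $G$, the cubic $G^{3/2}$-contribution, and the forcing against the coercive term produced by Kato's inequality, the maximum principle is vacuous. Identifying the correct $a$ — and verifying that the commutator terms from $[\na,\D]\Ric$ do not spoil the balance — is the content borrowed from \cite{MunteanuWang-2015.3,CaoCui-2014}; the rest of the argument is a straightforward adaptation of the proof of Theorem \ref{curv estimate in dim 3 exp}\ref{3 dim S control Rm exp}.
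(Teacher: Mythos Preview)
Your proposal has a genuine gap: the claimed ``coercive $+G^2$-term'' from the Kato inequality does not materialize. At an interior maximum of $G=\phi^2|\na\Ric|^2/\tau^a$, the condition $\na G=0$ gives $\na|\na\Ric|^2=|\na\Ric|^2\big(a\na\tau/\tau-2\na\phi/\phi\big)$, so $|\na|\na\Ric||$ is controlled by $|\na\Ric|$ times a quantity of order $\tau^{-1/2}$. Feeding this through Kato yields $|\na^2\Ric|^2\geq c|\na\Ric|^2\cdot O(\tau^{-1})$, which after dividing by $\tau^a$ is only $O(G/\tau)$, not $G^2$. Without a genuine positive quadratic term, the inequality at the maximum reads $0\geq -CG-CG^{3/2}-C$, which is vacuous and cannot bound $G$. (Two side issues: the commutator term $\na f\ast\Rm\ast\Ric$ actually simplifies to $\na\Ric\ast\Ric$ via the soliton identity \eqref{divRm}, so the forcing $-C_3|\na f|^2$ is an artifact; and the Bochner formula for $|\na\Ric|^2$ also produces a $|\na\Rm||\na\Ric|$ term that you have not accounted for.)

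The paper's argument avoids $\D|\na\Ric|^2$ entirely and works directly with $W:=|\Rm|+\lambda|\Ric|^2$. The coercive term arises as follows: from the already-established inequality \eqref{ineq for Ric norm sq in dim 4} and the boundedness of $|\Ric|$ one has $\D|\Ric|^2\geq|\na\Ric|^2-c_0$ outside a compact set; then Lemma~\ref{Rm to Rc} is read \emph{backwards} to give $|\na\Ric|^2\geq\frac{1}{2A_0^2}|\Rm|^2-c_0$ wherever $|\na f|\geq 1$. Hence $\lambda\D|\Ric|^2$ contributes $+\frac{\lambda}{2A_0^2}|\Rm|^2$, which for large $\lambda$ dominates the $-c|\Rm|^2$ coming from $\D|\Rm|\geq-|\Rm|-c|\Rm|^2$. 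Since $|\Ric|^2$ is bounded, $|\Rm|^2$ and $W^2$ differ by a constant, and one obtains $\D W\geq W^2-W-c_0$; the cutoff argument of Theorem~\ref{curv estimate in dim 3 exp}\ref{3 dim S control Rm exp} then closes. The key point you are missing is that the good $|\na\Ric|^2$ term lives in $\D|\Ric|^2$, not in $\D|\na\Ric|^2$, and becomes coercive for $|\Rm|$ precisely through Lemma~\ref{Rm to Rc}.
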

\begin{proof} Throughout this proof, we use $c_0$ to denote some constants depending on the global upper bound of $|\Ric|$ and $A_0$ in Lemma \ref{Rm to Rc}, its value may be different from line by line. Since $|\na f|^2=-f-S\to \infty$ as $x\to \infty$, we have by Lemma \ref{Rm to Rc}
\begin{eqnarray*}
|\na \Ric|^2&\geq&\frac{1}{2A_0^2}|\Rm|^2-|\Ric|^2\\
&\geq&\frac{1}{2A_0^2}|\Rm|^2-c_0
\end{eqnarray*}
outside a compact subset of $M$. Moreover by (\ref{ineq for Ric norm sq in dim 4})
\begin{eqnarray}
\notag\D |\Ric|^2&\geq& |\na\Ric|^2 -c_0\\
\notag&\geq&\frac{1}{2A_0^2}|\Rm|^2-c_0\\
\label{ineq for norm sq Ric under Ric bdd in dim 4} &\geq&\frac{1}{4A_0^2}(|\Rm|+\lambda|\Ric|^2 )^2-\frac{\lambda^2}{2A_0^2}|\Ric|^4-c_0\\
\notag&\geq&\frac{1}{4A_0^2}(|\Rm|+\lambda|\Ric|^2 )^2-\frac{\lambda^2}{2A_0^2}c_0-c_0,
\end{eqnarray}
where $\lambda$ is any non-negative constant. We apply (\ref{eqn of Rm in exp}) and get
\begin{eqnarray*}
\D |\Rm|^2&=&2|\na \Rm|^2+2\langle\Rm,\D \Rm\rangle\\
&\geq&2|\na \Rm|^2-2|\Rm|^2-c|\Rm|^3,
\end{eqnarray*}
where $c$ is an absolute constant. By Kato's inequality,
$$\D |\Rm|\geq -|\Rm|-c|\Rm|^2$$
on the set where $|\Rm|>0$. Using (\ref{ineq for norm sq Ric under Ric bdd in dim 4}), we see that for all $\lambda\geq 0$,

$$\D \big(|\Rm|+\lambda|\Ric|^2\big)\geq \big(\frac{\lambda}{4A_0^2}-c\big)\big(|\Rm|+\lambda|\Ric|^2\big)^2-|\Rm|-\frac{\lambda^3}{2A_0^2}c_0-c_0\lambda.$$
Let $W:=|\Rm|+\lambda|\Ric|^2$, we may rewrite the above inequality as
\begin{eqnarray*}
\D W&\geq& (\frac{\lambda}{4A_0^2}-c\big)W^2-W-\frac{\lambda^3}{2A_0^2}c_0-c_0\lambda\\
&\geq& W^2-W-\frac{\lambda^3}{2A_0^2}c_0-c_0\lambda
\end{eqnarray*}
for all sufficiently large $\lambda$. Similar argument as in the proofs of Theorems \ref{curv estimate in dim 3 exp}\ref{3 dim S control Rm exp} and \ref{bdd S and f proper implies bdd Ric} gives the boundedness of $W$.
\end{proof}

\end{document}